\theoremstyle{plain}
\newtheorem{theorem}{Theorem}[section]
\newtheorem{proposition}[theorem]{Proposition}
\newtheorem{corollary}[theorem]{Corollary}
\newtheorem{assumption}[theorem]{Assumption}
\newtheorem{remark}[theorem]{Remark}
\newtheorem{lemma}[theorem]{Lemma}
\renewcommand{\P}{\mathcal P}
\newcommand{\X}{\mathbf X}
\newcommand{\ccdot}{\,\cdot\,}
\newcommand{\be}{\begin{equs}}
\newcommand{\ee}{\end{equs}}
\newcommand{\vertiii}[1]{{\left\vert\kern-0.25ex\left\vert\kern-0.25ex\left\vert #1 
    \right\vert\kern-0.25ex\right\vert\kern-0.25ex\right\vert}}
\newcommand{\lvertiii}{{\vert\kern-0.25ex\vert\kern-0.25ex\vert}}    
\newcommand{\rvertiii}{{\vert\kern-0.25ex\vert\kern-0.25ex\vert}}
\newcommand{\1}{\mathbf 1}    
\newcommand{\R}{\mathbb R}
\DeclareMathOperator{\Normal}{N}
\DeclareMathOperator{\tr}{trace}
\DeclareMathOperator{\var}{var}
\DeclareMathOperator{\KL}{KL}
\DeclareMathOperator{\argmin}{argmin}
\DeclareMathOperator{\Lip}{Lip}
\DeclareMathOperator{\diag}{diag}
\newcommand{\mc}[1]{\mathcal #1}
\newcommand{\bigO}{\mathcal O}
\newcommand{\E}{\mathbf E}
\DeclareMathOperator{\Binom}{Binomial}
\begin{document}
\begin{frontmatter}

\title{Error bounds for Approximations of Markov chains used in Bayesian Sampling}
\runtitle{Error bounds for Approximations of Markov chains}

\author{\fnms{James E.} \snm{Johndrow}\corref{} \ead[label=e1]{johndrow@stanford.edu}}
\address{390 Serra Mall \\ Stanford, CA, USA \\ \printead{e1}}
\affiliation{Stanford University}
\and
\author{\fnms{Jonathan C.} \snm{Mattingly}\ead[label=e2]{jonm@math.duke.edu}}
\address{Departments of Mathematics and Statistical Science\\ Duke University \\ Durham, NC, 28808 \\ \printead{e2}}
\affiliation{Duke University}

\runauthor{Johndrow and Mattingly}

\begin{abstract}
We give a number of results on approximations of Markov kernels in total variation
and Wasserstein norms weighted by a Lyapunov function. The results are applied to
examples from Bayesian statistics where approximations to transition kernels 
are made to reduce computational costs. 
\end{abstract}

\begin{keyword}[class=MSC60J05]
\kwd{Markov chain Monte Carlo}
\kwd{Martingale methods}
\kwd{Lyapunov function}
\kwd{Wasserstein metric}
\end{keyword}

\end{frontmatter}


\maketitle
Often in modern computational Bayesian statistics one approximates a
Markov chain used to estimate posterior exceptions with a
computationally simpler Markov chain. Such approximations 
are growing in popularity due to the prevalence of high dimensional
parameter spaces and large sample size in contemporary applications.
Some examples can be found in \cite{korattikara2014austerity,
welling2011bayesian,chen2014stochastic, banerjee2008gaussian, bardenet2014towards}.
Motivated by this, we study a number of approximation schemes and 
develop general approximation results to control the error introduced 
by using an approximating Markov chain. 

We do not strive for complete generality but rather study a few
examples which capture the essence of many approximation
schemes. We emphasize a unified presentation which turns on
establishing approximation error in a metric -- or in some cases a semimetric
that does not satisfy the triangle inequality -- for which the original
Markov kernel is a strict contraction. This greatly simplifies the
presentation relative to previous works. It also works equally well in
total variation, Wasserstein metrics, and semimetric notions of convergence.   

We begin with the setting where the approximating and exact kernels
are absolutely continuous with respect to Lebesgue measure. Our motivating
examples are Gaussian approximations in Gibbs sampling and minibatching 
Metropolis-Hastings. In this setting it is natural to study convergence
in a weighted total variation norm.


We then consider 
Metropolis-Hastings schemes for sampling the posterior in Gaussian process models that
arise in spatial statistics and nonparametric regression. As
commonly done in the literature, we employ approximation schemes
which make use of low-rank approximations of a covariance matrix 
and discretization of the state space. These
produce approximating transition kernels which are singular (in the sense of
measure) to the original transition kernel. Hence, the approximations
are not close in the total variation norm. However, we will show that
they are close in the 1-Wasserstein norm. As the original kernel is not 
a strict contraction in 1-Wasserstein in some examples, we also consider
weighting by the Lyapunov function, resulting in a semimetric introduced in
\cite{hairer2011asymptotic}. 

Many of the results we give in total variation
apply equally in Wasserstein norms. The application-driven nature of the
presentation notwithstanding, the motivation for 
treating the Wasserstein and total variation cases separately is that the most
difficult part of applying our results is showing that the original kernel
is a strict contraction in the selected metric/semimetric. These proofs are fairly standard
in total variation norms and are omitted, but are less frequently seen in 
Wasserstein norms. We therefore devote some time to giving verifiable conditions
for a kernel to be a strict contraction in Wasserstein metrics -- and Wasserstein-like 
semimetrics -- including adapting some arguments from
the continuous time case to our discrete time setting. These conditions are then 
verified for our motivating applications.

Our development is inspired by \cite{hairer2011yet} and \cite{hairer2011asymptotic}
in its focus on proving strict contractions in weighted supremum norms for the 
original kernel. 
In essence, we show that if one begins with an exact kernel that is
a strict contraction, it is quite easy to obtain useful bounds on the approximation error
of time averages and other pathwise quantities. Consequently, the main effort in showing
that an approximation scheme satisfies our conditions is often showing that the original
kernel is a strict contraction. We therefore devote some time to giving verifiable conditions
for a kernel to be a strict contraction, particularly in the less standard weighted Wasserstein-like 
semimetric, where we give a finite-time version of the result in \cite{hairer2011asymptotic}.

Several of our results are directly comparable to those in 
\cite{rudolf2015perturbation}, and, to some extent, \cite{pillai2015ergodicity}.
\citet{rudolf2015perturbation} in particular also prove perturbation bounds in 
weighted supremum norms, but we choose certain constants as in
\cite{hairer2011asymptotic, hairer2011yet} to obtain 
strict contractions, simplifying many of the arguments. The results using non-metric
notions of convergence are unlike \cite{pillai2015ergodicity} or \cite{rudolf2015perturbation}. 
We also provide variation
bounds using Poisson equation techniques similar to \cite{GlynnMeyn1996,KontoyiannisMeyn2003,KontoyiannisMeyn2012,MattinglyStuartTretyakov2015}. 
Of course, such methods are intimately related to classical Martingale and potential methods 
\cite{MR0415773} as
well as classical ideas from dynamical systems. If no other structure
is assumed, good control of the approximation error for 
time averages turns out to require the pointwise approximation error to be small relative
to the ``spectral gap'' in the chosen metric/semimetric. This resembles the situation for uniformly ergodic chains, 
studied by the authors in \cite{johndrow2017coupling}, and others in \cite{mitrophanov2005sensitivity, 
alquier2014noisy}. 

\vspace{1em}
\noindent {\bf Acknowledgements.} Both authors thank the NSF for its
partial suport of this project through the grant DMS-3332219 as well
as David Dunson, Sayan Mukherjee, Bamdad Hosseini, and Daniel Rudolf 
for useful and stimulating conversations.

\section{Bounds in weighted total variation} \label{sec:Bounds}
We begin by defining the weighted total variation metrics which will
be used to quantify convergence, largely following
\cite{hairer2011yet} which was informed by \cite{GlynnMeyn1996,meyn2012markov}. 
Many of the results will actually hold for any metric, which
we point out as appropriate. We then introduce a approximating change and
bound the shift in the invariant measure and time averages. 

\subsection{Basic mixing results}
We assume conditions on the Markov kernel similar to those in \cite{hairer2011yet,meyn2012markov}.
Let $\P(x,\ccdot)$ be a Markov kernel on a Polish state space $\mathbf X$, which in many
applications is $\mathbb R^p$, $p$-dimensional Euclidean space. We use $\P$ for operators
defined on the set of measurable functions and the set of finite measures
\be
(\P \varphi)(x) = \int_{\X} \varphi(y) \P(x,dy), \quad (\mu \P)(A) = \int_{\X} \P(x,A) \mu(dx).
\ee
We assume that $\P$ satisfies a Foster-Lyapunov drift condition

\begin{assumption} \label{ass:lyapunov}
  There exists a function $V: \X \to [0,\infty)$ such that for some $\gamma \in (0,1)$ and $K >0$
\begin{align}
  \label{eq:4}
  (\P V)(x) \leq \gamma V(x) + K
\end{align}
for all $x \in \X$.
\end{assumption}

We also assume that sublevel sets of $V$ are ``small'' in that they satisfy a uniform
minorization condition.
\begin{assumption} \label{ass:small}
  For every $R>0$, there exists $\bar \alpha_0 \in (0,1)$ (depending on $R$) such that
\begin{align}
  \label{eq:3}
\sup_{x,y \in \mathcal S(R)}  d_0(\delta_x \P, \delta_y \P) \leq 2\bar \alpha_0
\end{align}
for $\mathcal S(R)=\{ x : V(x) \leq R\}$, where $d_0$ is the total variation metric. 
\end{assumption}

To quantify the rate of convergence to equilibrium, we procede in the
spirit of \cite{meyn2012markov} and  define a family of weighted supremum norms indexed by a scale parameter $\beta > 0$
by
\be
\| \varphi \|_\beta = \sup_x \frac{|\varphi(x)|}{1+\beta V(x)}
\ee
and the dual metric $\rho_\beta$ on probability measures
\be \label{eq:rho}
\rho_\beta(\mu,\nu) = \sup_{\varphi: \|\varphi\|_\beta \le 1} \int_{\X} \varphi(x) (\mu - \nu)(dx) = \int_{\X} (1+\beta V(x))|\mu-\nu|(dx),
\ee
a weighted total variation distance.
\citet{hairer2011yet} show that for $\beta$ sufficiently small, the
Markov semigroup $\P$ is a contraction in the metric $\rho_\beta$
under Assumptions~\ref{ass:lyapunov} and~\ref{ass:small}. In
\cite{hairer2011yet}, they also showed 
 that these metrics are equivalent to the metric
$d_\beta$ on measures induced by
\be \label{eq:dBeta}
d_\beta(x,y) = \left \{ \begin{array}{cc} 0 & x=y \\ 2 + \beta V(x) + \beta V(y) & x \ne y \end{array} \right.\,
\ee
To define $d_\beta$ for measures, one first defines a Lipschitz seminorm on
measurable functions by
\be \label{eq:LipschitzNorm}
\vertiii{\varphi}_\beta &= \sup_{x \ne y} \frac{|\varphi(x) -
  \varphi(y)|}{d_\beta(x,y)}\,.
\ee
This in turn induces the metric $d_\beta$ on probability measures
through
\be \label{eq:DualMetric}
\hspace{-2mm}d_\beta(\mu,\nu) &= \sup_{\varphi : \vertiii{\varphi}_\beta \le 1} \int_{\X} \varphi(x)(\mu-\nu)(dx) = \inf_{\Gamma \in \mathcal C(\mu,\nu)} \int_{\X \times \X} d(x,y) \Gamma(dx,dy)
\ee
for which it turns out that $\vertiii{\varphi}_\beta = \inf_{c \in \mathbb R} \|\varphi + c\|_\beta$ and therefore 
$d_\beta = \rho_\beta$. Here $\mathcal C(\mu,\nu)$ is the space of all couplings of $\mu,\nu$. 
In the sequel we freely interchange $d_\beta$ and $\rho_\beta$.
We now give the convergence theorem from
\citet{hairer2011yet} which uses these metrics.
\begin{theorem}[Theorem 1.3 of \cite{hairer2011yet}] \label{thm:HM2011}
  Under Assumptionss~\ref{ass:lyapunov} and~\ref{ass:small}, there
  exist an $\bar \alpha \in (0,1)$ and $\beta >0$ so that 
  \begin{align*}
    d_\beta(\nu_1 \P, \nu_2 \P) \leq \bar \alpha d_\beta(\nu_1,\nu_2) 
  \end{align*}
for all probability measure $\nu_1$ and $\nu_2$.
\end{theorem}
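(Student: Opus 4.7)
The plan is to work with the coupling characterization of $d_\beta$ given in~\eqref{eq:DualMetric}: since $d_\beta(\mu,\nu) = \inf_{\Gamma \in \mathcal C(\mu,\nu)} \int d_\beta(u,v)\,\Gamma(du,dv)$, it suffices by a standard Monge--Kantorovich/gluing argument to prove the one-step contraction for Dirac masses,
\be
d_\beta(\delta_x \P, \delta_y \P) \le \bar\alpha\, d_\beta(x,y) \qquad \text{for all } x\ne y,
\ee
and then extend to arbitrary $\nu_1,\nu_2$ by integrating a coupling attaining (or approaching) the infimum in $d_\beta(\nu_1,\nu_2)$. For any coupling $\Gamma \in \mathcal C(\delta_x\P,\delta_y\P)$, the definition of $d_\beta$ in~\eqref{eq:dBeta} gives
\be
\int d_\beta(u,v)\,\Gamma(du,dv) \le 2\,\Gamma\{u\ne v\} + \beta(\P V)(x) + \beta(\P V)(y),
\ee
so the whole proof reduces to controlling the ``disagreement mass'' $\Gamma\{u\ne v\}$ and the drift term.

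I would then split on a sublevel set threshold $R$, to be chosen, and handle the two regimes with the two assumptions separately. In the \emph{outer} regime $V(x)+V(y) > R$, take $\Gamma$ to be any coupling (so $\Gamma\{u\ne v\} \le 1$) and use Assumption~\ref{ass:lyapunov} to bound $\P V(x) + \P V(y) \le \gamma(V(x)+V(y)) + 2K$. One then wants
\be
2 + \beta\gamma\bigl(V(x)+V(y)\bigr) + 2\beta K \le \bar\alpha\bigl(2 + \beta(V(x)+V(y))\bigr),
\ee
which rearranges to $\beta[R(\bar\alpha-\gamma) - 2K] \ge 2(1-\bar\alpha)$ whenever $V(x)+V(y) \ge R$; so provided $\bar\alpha > \gamma$ and $R > 2K/(\bar\alpha-\gamma)$, any sufficiently large $\beta$ (from below) works in this regime. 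In the \emph{inner} regime $V(x)+V(y) \le R$, Assumption~\ref{ass:small} yields a maximal coupling $\Gamma$ with $\Gamma\{u\ne v\} \le \bar\alpha_0 = \bar\alpha_0(R) < 1$, and one needs
\be
2\bar\alpha_0 + \beta\gamma R + 2\beta K \le \bar\alpha\bigl(2 + \beta(V(x)+V(y))\bigr).
\ee
Using $V(x)+V(y) \ge 0$ on the right, this reduces to $2(\bar\alpha - \bar\alpha_0) \ge \beta(\gamma R + 2K)$, which forces $\bar\alpha > \bar\alpha_0(R)$ and $\beta$ small enough.

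The main obstacle is the mutual dependence of the constants: $\bar\alpha_0$ depends on $R$, while the admissible range of $\beta$ depends on both. I would untangle this in the following order. First fix $R$ large enough that $R > 2K/(1-\gamma)$ (so the outer inequality is feasible); then Assumption~\ref{ass:small} produces a corresponding $\bar\alpha_0 = \bar\alpha_0(R) \in (0,1)$; then pick $\bar\alpha \in (\max(\gamma,\bar\alpha_0),1)$, e.g.\ the midpoint; finally choose $\beta$ in the nonempty interval
\be
\frac{2(1-\bar\alpha)}{R(\bar\alpha-\gamma)-2K} \;\le\; \beta \;\le\; \frac{2(\bar\alpha-\bar\alpha_0)}{\gamma R + 2K},
\ee
enlarging $R$ if necessary so that the lower bound lies below the upper bound. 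With such a choice both regimes yield the desired one-step contraction with constant $\bar\alpha$, and integration against an optimal coupling for $d_\beta(\nu_1,\nu_2)$ completes the proof. The equivalence $d_\beta = \rho_\beta$ noted after~\eqref{eq:DualMetric} means the contraction transfers immediately to the weighted total variation form.
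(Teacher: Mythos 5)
The paper does not actually prove this statement: it is imported verbatim as Theorem 1.3 of \cite{hairer2011yet}, so the only internal point of comparison is the analogous weighted-Wasserstein result, Theorem \ref{thm:WtdWasserstein}, whose proof follows exactly your strategy (reduce to Dirac masses, split on a sublevel set of $V$, use the drift in the outer regime and the minorization in the inner one, then untangle the constants). Your argument is the standard Hairer--Mattingly proof recast in coupling language rather than their test-function language, and the individual estimates are all correct: the bound $\int d_\beta\,d\Gamma \le 2\Gamma\{u\ne v\}+\beta(\P V)(x)+\beta(\P V)(y)$ (valid since $V\ge 0$), the identification of the maximal coupling's disagreement mass with $\bar\alpha_0$ via Assumption \ref{ass:small}, and the two displayed sufficient conditions on $\beta$.

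The one genuine flaw is in the final untangling of constants. Your recipe is: fix $R$, take $\bar\alpha$ to be (say) the midpoint of $(\max(\gamma,\bar\alpha_0),1)$, and then ``enlarge $R$ if necessary'' so that the window $\frac{2(1-\bar\alpha)}{R(\bar\alpha-\gamma)-2K}\le\beta\le\frac{2(\bar\alpha-\bar\alpha_0)}{\gamma R+2K}$ is nonempty. Enlarging $R$ does not help: both endpoints decay like $1/R$, so the comparison tends to $\frac{1-\bar\alpha}{\bar\alpha-\gamma}$ versus $\frac{\bar\alpha-\bar\alpha_0}{\gamma}$, which need not be favorable; worse, $\bar\alpha_0=\bar\alpha_0(R)$ typically degrades toward $1$ as $R$ grows, which shrinks the right endpoint and can invalidate the earlier choice of $\bar\alpha$. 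The correct knob is $\bar\alpha$, not $R$: with $R>2K/(1-\gamma)$ fixed and $\bar\alpha_0(R)$ determined, let $\bar\alpha\uparrow 1$; then the left endpoint tends to $0$ while the right endpoint tends to $2(1-\bar\alpha_0)/(\gamma R+2K)>0$, so the window is nonempty for $\bar\alpha$ close enough to $1$. (Hairer--Mattingly avoid the two-sided constraint entirely by not fixing $\bar\alpha$ in advance in the outer regime: there they obtain the contraction factor $(2+\beta\gamma_0 R)/(2+\beta R)$ with $\gamma_0=\gamma+2K/R<1$, which is strictly less than $1$ for \emph{every} $\beta>0$, so only the inner regime's upper bound on $\beta$ survives.) With that one-line repair your proof is complete; the gluing step from Dirac masses to general $\nu_1,\nu_2$ is standard on a Polish space, modulo a measurable selection of near-optimal couplings, and is the same convexity argument the paper invokes after Theorem \ref{thm:WtdWasserstein}.
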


\subsection{Basic approximation results}

Now consider a second transition kernel $\P_\epsilon$ that is ``nearby'' $\P$ in the following sense.
\begin{assumption} \label{ass:closeness}
  For some $\delta \geq 0$ and all $x$,
\begin{align*}
  d_1(\delta_x \P, \delta_x \P_\epsilon) &\leq \epsilon (1+\delta
  V(x))\\ & \Updownarrow \\ (\P-\P_\epsilon)\varphi(x) \leq
  \epsilon(1+\delta V(x)) &\text{ for all $|\varphi| \leq 1 + V$} 
\end{align*}
\end{assumption}

The following basic pertubation bound is one of our main results.
\begin{theorem} \label{thm:TVperturb}
  Suppose assumptions \ref{ass:lyapunov}, \ref{ass:small}, and \ref{ass:closeness} hold. 
  Then there exists a $\beta \in (0,1]$ and $\bar \alpha \in (0,1)$ so that
  \begin{align*}
    \P_\epsilon \varphi(x) -     \P \varphi(y)  \leq \epsilon (1+ \delta V(x))
    +  \bar \alpha \, d_\beta(x,y) 
  \end{align*}
for all $|\varphi| \leq 1 + \beta V$.
\end{theorem}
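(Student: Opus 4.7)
The plan is to write the left-hand side as a telescoping sum that separates the perturbation from the mixing:
\[
\P_\epsilon \varphi(x) - \P \varphi(y) = \bigl[\P_\epsilon \varphi(x) - \P \varphi(x)\bigr] + \bigl[\P \varphi(x) - \P \varphi(y)\bigr].
\]
The first bracket is a pointwise one-step discrepancy between the two kernels at $x$, exactly the object controlled by Assumption~\ref{ass:closeness}. The second bracket measures how the exact kernel $\P$ separates two point masses, exactly the object controlled by the contraction in Theorem~\ref{thm:HM2011}. So the task reduces to bookkeeping about scale parameters and test-function classes.

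For the perturbation bracket, I would first fix $\beta\in(0,1]$; then any $\varphi$ with $|\varphi|\le 1+\beta V$ also satisfies $|\varphi|\le 1+V$, so Assumption~\ref{ass:closeness} applied to $-\varphi$ gives
\[
\P_\epsilon\varphi(x) - \P\varphi(x) = (\P-\P_\epsilon)(-\varphi)(x) \le \epsilon(1+\delta V(x)).
\]
For the contraction bracket, I would rewrite $\P\varphi(x)-\P\varphi(y)=\int\varphi\,d(\delta_x\P - \delta_y\P)$ and use the duality recalled after~\eqref{eq:LipschitzNorm}: since $|\varphi|\le 1+\beta V$ means $\|\varphi\|_\beta\le 1$, the identity $\vertiii{\varphi}_\beta = \inf_{c\in\R}\|\varphi+c\|_\beta$ yields $\vertiii{\varphi}_\beta\le 1$. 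Combining the dual formulation~\eqref{eq:DualMetric} with Theorem~\ref{thm:HM2011} then gives
\[
\P\varphi(x)-\P\varphi(y) \le \vertiii{\varphi}_\beta\, d_\beta(\delta_x\P,\delta_y\P) \le \bar\alpha\, d_\beta(\delta_x,\delta_y) = \bar\alpha\, d_\beta(x,y),
\]
where the last equality is immediate from the coupling formulation of $d_\beta$ applied to Dirac masses. Adding the two displays produces the advertised bound.

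The only subtlety to flag is coordinating the scale parameter $\beta$. Theorem~\ref{thm:HM2011} supplies the contraction for some $\beta>0$, while Assumption~\ref{ass:closeness} is stated against the unweighted Lyapunov scale; insisting $\beta\in(0,1]$ is what makes the test-function class $\{|\varphi|\le 1+\beta V\}$ embed into $\{|\varphi|\le 1+V\}$ so the two pieces address the same $\varphi$. Since the argument in~\cite{hairer2011yet} actually delivers the contraction for all sufficiently small $\beta$, taking $\beta\in(0,1]$ small enough is harmless. There is no substantive obstacle here — once $d_\beta$ is chosen as the working metric, the whole statement is a single triangle-type decomposition, and Assumptions~\ref{ass:lyapunov}--\ref{ass:closeness} are exactly what make each of the two pieces collapse to the stated bound.
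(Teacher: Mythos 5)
Your proof is correct and is essentially the paper's own argument: the paper applies the triangle inequality $d_\beta(\delta_y\P,\delta_x\P_\epsilon)\le d_\beta(\delta_y\P,\delta_x\P)+d_\beta(\delta_x\P,\delta_x\P_\epsilon)$ with the same intermediate measure $\delta_x\P$, bounding the first term by the contraction of Theorem~\ref{thm:HM2011} and the second by Assumption~\ref{ass:closeness}. You have merely unpacked the same decomposition at the level of test functions, and your bookkeeping about $\beta\in(0,1]$ and the norm equivalence $\vertiii{\varphi}_\beta=\inf_c\|\varphi+c\|_\beta$ is sound.
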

\begin{proof}
We have
\be
d_\beta(\delta_y \P,\delta_x \P_\epsilon) &\le d_\beta(\delta_y \P,\delta_x \P) + d_\beta(\delta_x \P, \delta_x \P_\epsilon) \\
&\le \bar \alpha d_\beta(x,y) + \epsilon (1+\delta V(x)),
\ee
where the first term followed from Assumptions \ref{ass:lyapunov} and \ref{ass:small} and \cite[Theorem 3.1]{hairer2011yet} and the second term from Assumption \ref{ass:closeness}. 
\end{proof}
As we are about to see, this inequality will be sufficient for many purposes.
Thus, if one is careful about defining metrics to obtain strict contraction, 
perturbation bounds can be obtained simply from the triangle inequality.
For example, this immediately gives a bound on the distance between the one-step
transition kernels for any pair of starting measures.
\begin{corollary}
  Let $\mu$ and $\nu$ be two probability measures. Then 
  \be \label{eq:DistanceInvariant}
    d_\beta(\mu \P_\epsilon, \nu \P) \leq \epsilon ( 1 + \delta \mu V) +
    \bar \alpha  d_\beta(\mu , \nu).
  \ee
\end{corollary}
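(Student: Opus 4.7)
The plan is to mimic the proof of Theorem~\ref{thm:TVperturb}, but working directly with the measures $\mu\P_\epsilon$ and $\nu\P$ rather than with point masses. I would first insert an intermediate measure via the triangle inequality and write
\[
d_\beta(\mu \P_\epsilon, \nu \P) \;\le\; d_\beta(\mu \P_\epsilon, \mu \P) \;+\; d_\beta(\mu \P, \nu \P),
\]
splitting the bound into a ``perturbation'' piece and a ``contraction'' piece.

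For the contraction piece, Theorem~\ref{thm:HM2011} applies directly and yields $d_\beta(\mu \P, \nu \P) \le \bar\alpha\, d_\beta(\mu,\nu)$, accounting for the second term in the claimed bound. For the perturbation piece, the key observation is that $d_\beta$ enjoys a convexity property inherited from its dual formulation \eqref{eq:DualMetric}: for any probability measure $\mu$ and any two kernels,
\[
d_\beta(\mu\P_\epsilon, \mu\P) \;\le\; \int_{\X} d_\beta(\delta_x \P_\epsilon, \delta_x \P)\,\mu(dx).
\]
This is immediate by testing against any $\varphi$ with $\vertiii{\varphi}_\beta \le 1$, writing $\int \varphi\, d(\mu\P_\epsilon - \mu\P) = \int (\P_\epsilon\varphi - \P\varphi)(x)\,\mu(dx)$, bounding the integrand pointwise by $d_\beta(\delta_x\P_\epsilon, \delta_x\P)$, and then taking the supremum over $\varphi$.

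To finish, I would combine this convexity with Assumption~\ref{ass:closeness}, which gives the pointwise estimate $d_1(\delta_x \P_\epsilon, \delta_x \P) \le \epsilon(1 + \delta V(x))$. Since $\beta \in (0,1]$ implies $d_\beta(x,y) \le d_1(x,y)$ pointwise, the corresponding dual metrics satisfy $d_\beta \le d_1$ on measures as well (the unit ball for $\vertiii{\cdot}_\beta$ is contained in that for $\vertiii{\cdot}_1$). Integrating yields
\[
d_\beta(\mu \P_\epsilon, \mu \P) \;\le\; \int_{\X} \epsilon\bigl(1 + \delta V(x)\bigr)\mu(dx) \;=\; \epsilon\bigl(1 + \delta\, \mu V\bigr),
\]
and summing the two pieces produces \eqref{eq:DistanceInvariant}.

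There is no serious obstacle here; the argument is essentially a ``measure-valued'' version of Theorem~\ref{thm:TVperturb}. The only points that deserve explicit mention are the convexity of $d_\beta$ in its arguments and the monotonicity $d_\beta \le d_1$ for $\beta \le 1$, both of which are routine consequences of the dual representation. One could alternatively derive the corollary by integrating the inequality of Theorem~\ref{thm:TVperturb} against an optimal coupling of $(\mu,\nu)$ and invoking the Kantorovich identity on the right-hand side of \eqref{eq:DualMetric}; the triangle-inequality route is slightly cleaner and better matches the style of the preceding proof.
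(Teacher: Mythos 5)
Your argument is correct and is essentially the route the paper intends: the corollary is stated as an immediate consequence of Theorem~\ref{thm:TVperturb}, whose proof uses exactly your triangle-inequality decomposition (perturbation piece via Assumption~\ref{ass:closeness} plus contraction piece via Theorem~\ref{thm:HM2011}), and your lifting from point masses to general $\mu,\nu$ via the convexity of $d_\beta$ and the monotonicity $d_\beta\le d_1$ for $\beta\le 1$ is the same routine step the paper leaves implicit.
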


We can use this result to bound the distance between the invariant
measure(s).  If $\mu_0$ and $\mu_\epsilon$ are invariant 
measures of $\P$ and $\P_\epsilon$ respectively then
\be
   d_\beta(\mu_\epsilon , \mu_0) \leq   \frac{\epsilon}{1-\bar \alpha} ( 1 +
  \delta \, \mu_\epsilon V ).
\ee
The ratio of the pointwise approximation error $\epsilon$ to the spectral
gap $1-\bar \alpha$ is a key quantity that will appear often; clearly
$\epsilon \ll 1-\bar \alpha$ implies small bias.
Iterating the estimate in \eqref{eq:DistanceInvariant} gives
\be \label{eq:FiniteTime}
    d_\beta(\mu \P_\epsilon^n, \nu \P^n) \leq \epsilon  \sum_{k=1}^{n} \bar \alpha^{n-k}( 1 + \delta 
  \mu \P_\epsilon^{k-1} V) +
    \bar \alpha^n  d_\beta(\mu , \nu),
\ee
a finite-time error bound. If we now assume
\begin{assumption} \label{ass:lyapunovPeps}
  For some $\gamma_\epsilon \in (0,1)$ and $K_\epsilon >0$
\begin{align}
  \label{eq:e4}
  (\P_\epsilon V)(x) \leq \gamma_\epsilon V(x) + K_\epsilon
\end{align}
for all $x$.
\end{assumption}
so that $V$ is also a Lyapunov function of $\P_\epsilon$, then
\begin{align*}
   \mu \P_\epsilon^{j} V \leq \gamma^j_\epsilon \mu V + \frac{K_\epsilon}{1-\gamma_\epsilon},
\end{align*}
and in place of \eqref{eq:FiniteTime} we can use the bound
\be
  d_\beta(\mu \P_\epsilon^n, \nu \P^n) &\leq
                                       \frac{\epsilon}{1-\bar \alpha}
                                       \left(1 + \delta \frac{K_\epsilon}{1-\gamma_\epsilon} \right)
                                       + \epsilon \delta  (\mu V)\sum_{k=1}^{n} \bar \alpha^{n-k} \gamma_\epsilon^k+
                                       \bar \alpha^n  d_\beta(\mu , \nu)
  \\
&\leq
                                       \frac{\epsilon}{1-\bar \alpha}
                                       \left(1 + \delta \frac{K_\epsilon}{1-\gamma_\epsilon}\right)
                                       + \epsilon \delta ( \mu V
  ) (\bar \alpha \vee \gamma_\epsilon)^{n-1}n +
                                       \bar \alpha^n  d_\beta(\mu , \nu). \label{eq:dPnPen}
\ee
This result can be compared to \cite[Theorem 3.1]{rudolf2015perturbation},
which uses a condition similar to Assumption \ref{ass:ClosenessTV}. 
We note that Assumption \ref{ass:lyapunovPeps} is implied by Assumption 
\ref{ass:closeness} when $\epsilon\delta < 1-\gamma$; a simple argument is 
given in the proof of Remark \ref{rem:Harris}. Also under Assumption 
\ref{ass:lyapunovPeps}, if $\mu_0$ and $\mu_\epsilon$ are invariant measures 
of $\P$ and $\P_\epsilon$ respectively then we have the bound
\begin{align*}
  d_\beta(\mu_0, \mu_\epsilon) \leq    \frac{\epsilon}{1-\bar \alpha} \left(1 + \delta \frac{K_\epsilon}{1-\gamma_\epsilon} \right).
\end{align*}

Notably, we use no special features of the weighted total variation metric in proving
any of the above results, or, more formally
\begin{remark}
 All of the previous results hold with $d_\beta$ replaced by any metric in which $\P$ is 
 a strict contraction when Assumption \ref{ass:closeness} holds in the same
 metric. The key ingredients are therefore strict contraction in a metric $d$ and Assumption 
 \ref{ass:closeness} in the same metric $d$.
\end{remark}

We now show that under the following additional condition, one can prove Harris' theorem for $\P_\epsilon$. 
\begin{assumption} \label{ass:ClosenessTV}
For every $0<R< \frac{2(K+ \epsilon)}{1-(\gamma + \epsilon \delta)}$ there 
exists $\zeta 
< 1-\bar \alpha_0$ (depending on $R$) such that 
$\sup_{x \in \mathcal S(R)} d_0(\delta_x \P_\epsilon,\delta_x \P) \le \zeta$  
for $\mathcal S(R) = \{x : V(x) \le R\}$. 
\end{assumption}
This result is included mainly 
for completeness. It is common in the MCMC literature to prove Harris' theorem, 
and many practitioners mistakenly interpret it as a guarantee of good 
finite-time 
performance. It is clear from Theorem \ref{thm:ErrorBound} that this 
is not necessary to obtain the kind of variation bounds that are desired in 
MCMC applications, but the following result may nonetheless be of interest.

\begin{remark} \label{rem:Harris}
 Suppose Assumptions \ref{ass:small}, \ref{ass:closeness}, 
\ref{ass:lyapunov}, and \ref{ass:ClosenessTV} hold and $\delta 
\epsilon<1-\gamma$. Then there exists $\bar \alpha_\epsilon < 1$ and $\beta > 
0$ such that 
$\rho_\beta(\P_\epsilon \mu, \P_\epsilon \nu) \le \bar{\alpha_\epsilon} 
\rho_\beta(\mu,\nu)$
for any probability measures $\mu, \nu$ on $\mathbf X$. 
\end{remark}
\begin{proof}
We have 
\be
\P_\epsilon V &= (\P + \P_\epsilon - \P) V \le \gamma V + K + \epsilon(1+\delta V),
\ee
and for any $x,y \in \mathcal S(R)$ with $\mathcal S(R) = \{x : V(x) \le R\}$
\be
d_0(\delta_x \P_\epsilon, \delta_y \P_\epsilon) &\le d_0( \delta_x \P_\epsilon,\delta_x \P) + d_0(\delta_x \P, 
\delta_y \P) + d_0(\delta_y \P, \delta_y \P_\epsilon) \\
&\le \zeta + 2\bar \alpha_0 + \zeta = 2(\bar \alpha_0+\zeta).   
\ee
for every $R \le \frac{2(K+\epsilon)}{1-(\gamma+\epsilon \delta)}$.
\end{proof}

Our main variation bound is given by the following result. 
\begin{theorem} \label{thm:ErrorBound}
Assume that Assumptions \ref{ass:lyapunov}, \ref{ass:small}, \ref{ass:closeness}
and \ref{ass:lyapunovPeps} hold. Then there exists $C,c_0,c_1 < \infty$ so that for any
$|\varphi| < \sqrt{V}$
\begin{multline*}
  \mathbf E \left( \frac1n \sum_{k=0}^{n-1} \varphi(X_k^\epsilon) - \mu_0 \varphi  \right)^2\le 3 C^2 \epsilon c_0 + \frac{3 C^2}{n} \left(2+ \frac{2K_{\epsilon}}{1-\gamma_\epsilon} + \frac{ \epsilon \delta c_1 V(x_0)}{1-\sqrt{\gamma_\epsilon}} \right) + \mathcal{O}\left( \frac{1}{n^2} \right),
\end{multline*}
with $X_0^\epsilon = x_0$ and $X_k^\epsilon \sim \delta_{x_0} \P_\epsilon^{k-1}$. Moreover, the constants $C,c_0,c_1$ satisfy
\be
C &\le \frac{1 \wedge \mu_0 V}{1-\bar \alpha_{(1/2)}}, \quad c_0 \le 2 + 5 \frac{(\delta \vee \sqrt{\delta})(K_\epsilon \vee \sqrt{K_\epsilon})}{(1-\sqrt{\gamma_\epsilon})^2} \quad c_1 = \left( 2 + \frac{\sqrt{K_\epsilon}}{1-\sqrt{\gamma_\epsilon}}  \right),
\ee
where $\mu_0$ is the unique invariant measure of $\P$ and $1-\bar \alpha_{(1/2)}$ is the spectral gap in the weighted total variation norm built on $V^{1/2}$ with an appropriate $\beta_{(1/2)}>\beta$.
\end{theorem}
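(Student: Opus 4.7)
The plan is to use a Poisson-equation/martingale decomposition in the spirit of \cite{GlynnMeyn1996,MattinglyStuartTretyakov2015}. Since the test function satisfies only $|\varphi|\le\sqrt V$, I first need a contraction of $\P$ in a semimetric weighted by $\sqrt V$ rather than $V$. Because $\P V \le \gamma V + K$, concavity of the square root and subadditivity give $\P(\sqrt V) \le \sqrt\gamma\,\sqrt V + \sqrt K$, so $\sqrt V$ is itself a Foster-Lyapunov function for $\P$; the sublevel sets of $\sqrt V$ coincide with those of $V$, so Assumption \ref{ass:small} carries over, and Theorem \ref{thm:HM2011} produces a rate $\bar\alpha_{(1/2)}<1$ and scale $\beta_{(1/2)}>\beta$ for which $\P$ is a strict contraction in the weighted metric $d_{\beta_{(1/2)}}$.

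Next, I set $\hat\varphi = \varphi - \mu_0\varphi$ and solve the Poisson equation $\psi - \P\psi = \hat\varphi$ by the Neumann series $\psi = \sum_{k=0}^\infty \P^k\hat\varphi$. Because $|\varphi|\le\sqrt V$ gives $\vertiii{\hat\varphi}_{\beta_{(1/2)}}$ of order $1/\beta_{(1/2)}$, the contraction in $d_{\beta_{(1/2)}}$ produces a pointwise bound $|\psi(x)| \le C(1+\sqrt{V(x)})$ with $C$ of the stated order $(1\wedge\mu_0V)/(1-\bar\alpha_{(1/2)})$ (the $\mu_0V$ factor reflecting that $\hat\varphi$ is $\mu_0$-centered). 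Introducing the $(\P_\epsilon,\mathcal F_k)$-martingale differences $M_{k+1}=\psi(X_{k+1}^\epsilon)-\P_\epsilon\psi(X_k^\epsilon)$, I rewrite
\[
\hat\varphi(X_k^\epsilon) = \bigl[\psi(X_k^\epsilon) - \psi(X_{k+1}^\epsilon)\bigr] + M_{k+1} + (\P_\epsilon-\P)\psi(X_k^\epsilon),
\]
then sum from $k=0$ to $n-1$, divide by $n$, square, and apply $(a+b+c)^2\le 3(a^2+b^2+c^2)$ to split the target expectation into a boundary piece, a martingale piece, and a perturbation piece.

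Each piece is then bounded separately. The boundary piece $\tfrac{3}{n^2}\mathbf E[(\psi(x_0)-\psi(X_n^\epsilon))^2]$ is $\bigO(1/n^2)$ via $|\psi|^2\le 2C^2(1+V)$ and Assumption \ref{ass:lyapunovPeps}. The martingale piece collapses by orthogonality to $\tfrac{3}{n^2}\sum_k\mathbf E[M_{k+1}^2]$; using $\mathbf E[M_{k+1}^2\mid\mathcal F_k]\le \P_\epsilon\psi^2(X_k^\epsilon) \le 2C^2(1+\gamma_\epsilon V(X_k^\epsilon)+K_\epsilon)$ together with $\mathbf E V(X_k^\epsilon) \le \gamma_\epsilon^kV(x_0) + K_\epsilon/(1-\gamma_\epsilon)$ from Assumption \ref{ass:lyapunovPeps} produces the claimed $\tfrac{3C^2}{n}(2+2K_\epsilon/(1-\gamma_\epsilon))$. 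For the perturbation piece, Assumption \ref{ass:closeness} applied to $\psi/C$ (which satisfies $|\psi/C|\le 1+V$) yields $|(\P_\epsilon-\P)\psi(x)|\le C\epsilon(1+\delta V(x))$; routing the average through the sharper $\sqrt V$-drift $\P_\epsilon\sqrt V \le \sqrt{\gamma_\epsilon}\sqrt V + \sqrt{K_\epsilon}$ and Cauchy-Schwarz gives the leading $3C^2\epsilon c_0$ together with the $\epsilon\delta V(x_0) c_1/(1-\sqrt{\gamma_\epsilon})$ correction inside the $1/n$ bracket.

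The hard part will be the first step: establishing a spectral gap in the semimetric built on $\sqrt V$ rather than $V$, and turning the contraction into a sharp pointwise bound on $\psi$ with the $\mu_0V$ prefactor claimed in $C$. A secondary technical point is routing the perturbation estimate through the $\sqrt V$-drift of $\P_\epsilon$ rather than the coarser $V$-drift, so that the final constants depend on $(1-\sqrt{\gamma_\epsilon})^{-1}$ and $\sqrt{K_\epsilon}$ rather than their unsquared counterparts, matching the form of $c_0$ and $c_1$ in the statement.
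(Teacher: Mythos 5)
Your architecture is the paper's: show $V^{1/2}$ is itself a Lyapunov function so that $\P$ contracts in the weighted total variation metric built on $V^{1/2}$, define $U=\sum_k\P^k(\varphi-\mu_0\varphi)$ via the Neumann series to get $|U(x)|\le C(1+V^{1/2}(x))$, decompose the centered time average into a boundary term, a martingale term and a perturbation term, and split the square with $(a+b+c)^2\le3(a^2+b^2+c^2)$. The boundary and martingale pieces are handled exactly as in the paper and are fine.

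The gap is in the perturbation piece. You bound $|(\P_\epsilon-\P)U(x)|\le C\epsilon(1+\delta V(x))$ by applying Assumption \ref{ass:closeness} to $U/C$. Squaring $\frac1n\sum_k(\P_\epsilon-\P)U(X_k^\epsilon)$ then forces you to control $\mathbf E[(1+\delta V(X_k^\epsilon))(1+\delta V(X_j^\epsilon))]$, i.e.\ second moments of $V$ along the chain, which the hypotheses (a drift condition for $V$, not for $V^2$) do not provide; Cauchy--Schwarz and the $\sqrt V$-drift do not rescue this, since conditioning on $\mathcal F_j$ still leaves a $\delta^2 V(X_j^\epsilon)^2$ term. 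Moreover this route would produce an $\epsilon^2$ prefactor, not the stated $3C^2\epsilon c_0$. The missing step is the interpolation the paper performs (its ``Jensen'' step): because $|U|\le C(1+V^{1/2})$ grows only like the \emph{square root} of an admissible test function, Cauchy--Schwarz against the measure $|\delta_x\P-\delta_x\P_\epsilon|$ --- whose total mass is bounded by a constant while its $(1+V)$-weighted mass is at most $\epsilon(1+\delta V(x))$ --- gives
\begin{equation*}
|(\P_\epsilon-\P)U(x)|\;\lesssim\;C\,\epsilon^{1/2}\bigl(1+\delta^{1/2}V^{1/2}(x)\bigr).
\end{equation*}
This is what simultaneously yields the linear-in-$\epsilon$ leading term after squaring and reduces every cross term $\mathbf E[(1+\delta^{1/2}V^{1/2}(X_k^\epsilon))(1+\delta^{1/2}V^{1/2}(X_j^\epsilon))]$ to first moments of $V$, which are then controlled by conditioning together with $\P_\epsilon V\le\gamma_\epsilon V+K_\epsilon$ and $\P_\epsilon V^{1/2}\le\sqrt{\gamma_\epsilon}\,V^{1/2}+\sqrt{K_\epsilon}$; that is where the constants $c_0$ and $c_1$ involving $(1-\sqrt{\gamma_\epsilon})^{-1}$ and $\sqrt{K_\epsilon}$ actually arise. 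Without this step your argument cannot be completed under the stated assumptions.
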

The proof is deferred to the Appendix. The bound consists of an error term 
that goes to zero when $\epsilon \to 0$, terms that goe to 
zero at the rate $n^{-1}$, and terms going to zero like $n^{-2}$. This result
is also quite general.

\begin{remark}
 The proof of Theorem \ref{thm:ErrorBound} required that $\P$ is a strict contraction in $d_\beta$ only to prove that the potential $U = \sum_{k=0}^{\infty} \P^k(\varphi - \mu \varphi)$ is well-defined and has bounded $\lvertiii \cdot \rvertiii_\beta$ norm for $|\varphi| < \sqrt{V}$. The remaining parts of the argument require Assumption \ref{ass:closeness} and the Lyapunov condition. It follows that the result holds with $d_\beta$ replaced by a general lower semicontinuous metric $d$ for which $\P$ is a strict contraction in the associated dual metric. The result is then true for functions $\varphi$ for which $\varphi^2$ has bounded Lipschitz-$d$ norm. 
\end{remark}

\section{Motivating applications}
We give an overview of a few statistical applications for this work. The 
results in the previous section are well-suited to some of these applications, but
not others. This will motivate additional results in Wasserstein metrics that
appear in the following section.

\subsection{Bayesian posterior measures}
The target measure in Bayesian statistics is the posterior distribution of 
parameters given data. One obtains the posterior by first specifying a sampling model
$L(z,x)$, the distribution of observables/data $z$ given an unknown state of
nature $x$. Here $L(z,x)$ is taken to be a density with respect to Lebesgue or counting measure
for each $x \in \X$, with $\X$ the parameter space (often $\R^p$). The model is completed by
a prior $\pi$ that expresses the statistician's beliefs about the state of nature $x$
before observing $z$. We take $\pi$ to be a density with respect to a dominating measure on $\X$, 
usually Lebesgue measure on $\R^p$. The posterior measure $\mu$ expresses the statistician's updated
beliefs about $x$ after observing $z$, and has density $m$ satisfying
\be
m(x) \propto L(z,x) \pi(x),
\ee
where $\propto$ indicates that $m$ involves some unknown constants not 
depending on $x$. Bayesian statisticians seek to compute expectations with respect to $\mu$,
and often do so by constructing a Markov kernel $\P$ with invariant measure $\mu$, then
using pathwise time averages to approximate expectations with respect to $\mu$.

\subsection{Bayesian generalized linear models}
In Bayesian generalized linear models, $L$ takes the form
\be \label{eq:GLMlik}
\log L(x,z) \propto \sum_{i=1}^N g^{-1}(\langle w_i,x \rangle) z_i - A(\langle w_i,x \rangle)
\ee
where $w_i$ for $i=1,\ldots,N$ 
are predictors/covariates with each $w_i \in \R^p$, $z_i \in \R$, and $g^{-1}, A$ are
real-valued functions. A common prior choice is the multivariate Gaussian with 
$\log \pi(x) \propto \|B^{-1/2} (x - b)\|^2/2$, where $B$ is a positive-definite matrix and $b \in \R^p$.
Two popular approaches to approximating expectations with respect to this distribution are data augmentation
Gibbs sampling and random walk Metropolis. We consider approximation schemes for each.

Data augmentation Gibbs samplers introduce additional variables $\omega$ for which
\be
L(z,x) = \int L^*(\omega,z) f(\omega,x) d\omega,
\ee
where $f(\omega,x)$ is the joint density of $\omega,x$. 
A Gibbs sampling algorithm with update rule that iterates sampling from the conditional distributions
of
\be
x &\mid \omega, b,B \\
\omega &\mid x,z
\ee
has $x$-marginal invariant measure $\mu$, the posterior distribution arising 
from the original sampling model $L$ and
prior $\pi$. The advantage of this strategy is that often $f(\omega,x)$ can be chosen
such that both of the above conditionals are known distributions that are easy to sample from. Often, 
the conditional distribution of $x \mid \omega, b, B$ is $p$-variate Gaussian.

It is common that the conditional distribution of $x \mid \omega, b, B$ depends on $\omega$ only through
sums of $\omega$, or higher-order sample moments, with some entries of $\omega$ being independent and 
identically distributed given $x,z$. In large samples (i.e. $N$ is large in \eqref{eq:GLMlik}), one can easily need to sample millions
of latent variables $\omega$ at each iteration, only to condition on sums over large groups of these variables 
when updating $x$. In this case, it is natural to forego direct sampling of the $\omega$ and just sample a Gaussian
approximation to the relevant sums, defining an approximate kernel $\P_\epsilon$. 
We consider an approximation of this sort in Section \ref{sec:Probit}. The conditions of 
Theorem \ref{thm:ErrorBound} are natural for this application because: (1) both $\P$ and $\P_\epsilon$ are absolutely
continuous with respect to Lebesgue measure; (2) it is possible to obtain bounds on total variation error for
such approximations; and (3) many data augmentation Gibbs samplers satisfy Assumptions \ref{ass:lyapunov} and 
\ref{ass:small}. 

An alternative to data augmentation is to use a random-walk Metropolis algorithm to target $\mu$. The requires computing
$m(y)/m(x)$ at different points $y,x$ in the state space at each iteration. Typically this has computational cost that is
at least linear in the number of data $N$. Minibatching Metropolis reduces computation time by replacing $\log L(z,x)$ with
\be \label{eq:MinibatchTarget}
\log L_{\mathcal A}(z,x) = \frac{N}{|\mathcal A|} \sum_{i \in \mathcal A} g^{-1}(\langle w_i,x \rangle) z_i - A(\langle w_i,x \rangle)
\ee
for a (usually random) $\mathcal A \subset \{1,2,\ldots,N\}$. The subset $\mathcal A$ may be fixed or it may change at each
iteration. We analyze minibatching Metropolis in Section \ref{sec:MHMinibatch}, where we consider a version of the algorithm
that resamples random subsets $\mc A$ of a pre-specified size $|\mc A| = N_0(\epsilon)$ at each iteration. Clearly the smaller 
the value of $\epsilon$, the larger the value of $N_0(\epsilon)$ necessary to achieve the desired approximation error.
This application is also well-suited to the conditions
in Section \ref{sec:Bounds} since: (1) both $\delta_x \P$ and $\delta_x \P_\epsilon$ are absolutely continuous with respect to Lebesgue
measure for each $x \in \X$; (2) in most cases, both satisfy Harris' theorem and have the same Lyapunov function; (3) it is possible
to show approximation error results in total variation (though, as we shall see, small approximation error often means using most of the
data, negating the computational advantages). 

\subsection{Bayesian Spatial and Nonparametric Modeling} \label{sec:GPSummary}
Another class of statistical applications in which approximations are often used is spatial modeling with Gaussian processes. 
In this application, data are noisy observations of a function $f : \mathbf W \to \R$ 
for some index set $\mathbf W$. For example, $f(w)$ might be the temperature at location $w$ on the earth's surface $\mathbf W$.
For a vector $z = (z_1,\ldots,z_N)$ of observables at locations $w_i,\, i=1,\ldots,N$, the sampling model is
\be \label{eq:GPLikelihood}
\log L(z,x) \propto -\frac12 \log |x_3^2 \{I + x_2^2 \Sigma\}| - \frac12 \| \{x_3^2 I + x_3^2 x_2^2 \Sigma\}^{-1/2} z\|^2
\ee
where $\Sigma$ is a $N \times N$ positive-defininite matrix-valued function of $x_1$ and $W$ with entries
\be
\Sigma_{ij} = \phi(x_1, \rho(w_i,w_j)),
\ee
where $\phi$ is a positive-definite function and $\rho: \mathbf W \times \mathbf W \to \R_+$ is a metric on $\mathbf W$.
A simple example is when $\phi$ is a Gaussian kernel and $\rho$ is the squared Euclidean distance, for which 
$\log(\Sigma_{ij}) = -x_1^2 \|w_i-w_j\|^2$. 

The parameter space $\X$ is only three dimensional, so it might seem natural to construct a Metropolis-Hastings algorithm
to target $\mu$ after choosing some appropriate $\pi$. While this approach is common, it suffers from a serious limitation. 
Computing the ratios $m(y)/m(x)$ requires solving a linear system in the matrix $\Sigma$, and because $\Sigma$ is a function
of a state variable $x_1$, the system must be solved at each iteration. To make computation feasible when $N$ is large, 
practitioners typically make two approximations: (1) they discretize the proposal kernel in the first dimension $\X_1$, so that
the Markov chain will revisit the same values of $x_1$ multiple times and the linear system solutions can be re-used; and (2)
they utilize low-rank approximations to $\Sigma$ or truncated Karhunen-Loeve expansions to approximate the sampling model,
allowing linear systems to be solved more quickly in the lower-dimensional space. 
This application is \emph{not} a good fit for the results in Section \ref{sec:Bounds} because $\P$ and $\P_\epsilon$ are mutually
singular in the sense of measure. It is more natural to study these approximations in Wasserstein-1 metrics and Wasserstein-like semimetrics.

\section{Results in Wasserstein Metrics}
In addition to making natural the study of discrete approximations of continuous measures, the Wasserstein metric
can also be more attractive practically, as total variation is often too strong of a metric by which to assess approximation
accuracy. This can give the misleading impression that a proposed approximation is useless, while in reality it gives small
approximation error in a Wasserstein norm. As such, a theory of approximating Markov chains in the Wasserstein metric is quite attractive,
particularly in high-dimensional settings.

\subsection{Approximations in Unweighted Wasserstein}
We first give an approximation error result in the case where $\P$
is a strict contraction in an unweighted Wasserstein metric.  

Henceforth we will consider the distance  
\be \label{eq:ddef}
d(x,y) = 1 \wedge \frac{|x-y|}{\delta},
\ee
which generates the same topology as the standard distance but is localized
on a scale $\delta$ and capped at one. We define a 
Lipschitz seminorm on measurable functions $\lvertiii \varphi \rvertiii$ as in \eqref{eq:LipschitzNorm}, 
and the associated dual metric on probability measures $d(\mu,\nu)$ as in \eqref{eq:DualMetric}, but
with the distance in \eqref{eq:ddef} substituted for $d_\beta$. 
We write $\Lip_c(d)$ to denote the set of functions with Lipschitz-$d$ norm less than
$c$. Notice that because $d$ is capped
at one, if $\varphi \in \Lip_c(d)$ for $c < \infty$, then $\varphi$ is necessarily bounded.

We now give conditions sufficient to show a strict contraction in this metric.
The first condition on $\P$ states that $\P$ is locally Lipschitz in the
initial condition:
\begin{assumption} \label{ass:LipschitzKernel}
There exists $C < \infty$ such that for $|x-y| < \delta$
\be
d(\delta_x \P,\delta_y \P) < C |x-y|.
\ee
\end{assumption}

Our second condition is a form of
uniform
topological irreducibility. 
\begin{assumption} \label{ass:CouplingBall}
For all $\gamma >0$ and $(x,y) \in \X \times \X$ there exists $\Gamma_{x,y} \in 
\mathcal C(\delta_x \P,\delta_y \P)$ 
and $\alpha_\gamma > 0$ such that $\Gamma_{x,y}( (a,b) : |a-b|< \gamma ) > 
\alpha_\gamma$. 
\end{assumption}

Under these assumptions, we have the following contractility result
which implies exponential convergence in the Wasserstein metric.
\begin{theorem} \label{thm:UniformWasserstein}
Suppose Assumptions \ref{ass:LipschitzKernel} and \ref{ass:CouplingBall} hold. 
Then there exists $\bar \alpha < 1$ such that 
\be
d(\delta_x \P, \delta_y \P) \le \bar \alpha d(x,y).
\ee
\end{theorem}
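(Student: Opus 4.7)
The plan is to split the analysis by the cap scale of $d$, since $d(x,y) = 1$ when $|x-y|\geq\delta$ and $d(x,y) = |x-y|/\delta$ when $|x-y|<\delta$. In the far regime I would extract contraction from the coupling supplied by Assumption \ref{ass:CouplingBall}, and in the near regime from the Lipschitz bound in Assumption \ref{ass:LipschitzKernel}; taking $\bar\alpha$ to be the maximum of the two resulting constants then gives the claim.

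For the far regime $|x-y|\geq\delta$, I fix any $\gamma\in(0,\delta)$ and let $\Gamma_{x,y}\in\mathcal C(\delta_x\P,\delta_y\P)$ be the coupling given by Assumption \ref{ass:CouplingBall}. The Kantorovich dual characterization of $d$ (the analogue of \eqref{eq:DualMetric} for the truncated distance in \eqref{eq:ddef}) yields
\[ d(\delta_x\P,\delta_y\P) \;\le\; \int d(a,b)\,\Gamma_{x,y}(da,db) \;\le\; \frac{\gamma}{\delta}\,\Gamma_{x,y}(\{|a-b|<\gamma\}) + 1\cdot\Gamma_{x,y}(\{|a-b|\ge\gamma\}), \]
because $d(a,b)\le\gamma/\delta$ on the first event and $d\le 1$ on the complement. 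Substituting the lower bound $\Gamma_{x,y}(\{|a-b|<\gamma\}) > \alpha_\gamma$ produces $d(\delta_x\P,\delta_y\P) \le 1 - \alpha_\gamma(1-\gamma/\delta) =: \kappa_1 < 1$. Since $d(x,y)=1$ in this regime and $\alpha_\gamma$ is uniform in $(x,y)$, this is exactly $d(\delta_x\P,\delta_y\P)\le\kappa_1\,d(x,y)$ with $\kappa_1$ uniform.

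For the near regime $|x-y|<\delta$, Assumption \ref{ass:LipschitzKernel} gives $d(\delta_x\P,\delta_y\P) \le C|x-y| = C\delta\cdot d(x,y)$, so defining $\kappa_2 := C\delta$ delivers contraction provided $\kappa_2<1$. Setting $\bar\alpha = \max(\kappa_1,\kappa_2)<1$ then covers both regimes and completes the proof.

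The main obstacle is the compatibility condition $C\delta<1$: the Lipschitz bound of Assumption \ref{ass:LipschitzKernel} is only informative when $\delta$ is small relative to $1/C$ (otherwise $C|x-y|$ can exceed the cap $1$), so Assumption \ref{ass:LipschitzKernel} implicitly constrains the scale $\delta$ at which the metric is defined; I would state this as a standing requirement up front. A secondary, minor bookkeeping point is justifying the Kantorovich dual formula for the truncated lower semicontinuous pseudo-metric in \eqref{eq:ddef} rather than for $d_\beta$; this is standard for bounded lower semicontinuous cost functions, so any coupling automatically gives a valid upper bound on the Wasserstein-$d$ distance.
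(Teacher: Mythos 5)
Your proof is correct and follows essentially the same two-regime argument as the paper's: the Lipschitz bound of Assumption \ref{ass:LipschitzKernel} for $|x-y|<\delta$ and the coupling of Assumption \ref{ass:CouplingBall} for $|x-y|\ge\delta$, including the observation that $C\delta<1$ must be imposed, which the paper handles by writing $\delta<1/C$ inside the proof rather than in the theorem statement. You even take $\bar\alpha$ as the \emph{maximum} of the two constants, correctly, where the paper writes $\bar\alpha_\gamma\wedge C\delta$ --- an apparent typo for $\vee$.
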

\begin{proof} This proof largely follows Section~2.1 from \citet{HM_06}. 
First suppose $|x-y| < \delta$ and $\gamma < \delta < \frac1C$. Then
\be
d(\delta_x \P,\delta_y \P) &\le C |x-y| 
\le C \delta \bigl(1 \wedge \tfrac{|x-y|}{\delta} \bigr) \le C \delta d(x,y).
\ee
On the other hand if $|x-y|> \delta$ then defining $\Delta_\gamma = \{ (a,b) \in 
\X \times \X : |a-b| < \gamma \}$
\be
d(\delta_x \P, \delta_y \P) &\le \int_{\Delta_\gamma} d(a,b) \Gamma_{x,y}(da,db) 
+ \int_{\Delta_\gamma^c} d(a,b) \Gamma_{x,y}(da,db) \\
&\le \frac{\gamma}{\delta} \alpha_\gamma + (1-\alpha_\gamma) = 
1-\left(1-\frac{\gamma}{\delta} \right) \alpha_\gamma \le \bar \alpha_{\gamma} = \bar \alpha_{\gamma} d(x,y).
\ee
Putting $\bar \alpha = \bar \alpha_\gamma \wedge C\delta$ completes the proof.
\end{proof}

So far we have said nothing of approximations. We will proceed under the following special case
of Assumption \ref{ass:ClosenessTV} in the Wasserstein metric, which arises from taking
$V(x)$ constant.
\begin{assumption} \label{ass:WassersteinApprox}
$\P, \P_\epsilon$ satisfy
 $\sup_{x \in \X} d(\delta_x P, \delta_x \P_\epsilon) < \epsilon$.
\end{assumption}

This gives immediately a Corollary of Theorem \ref{thm:TVperturb}
\begin{corollary}
 Suppose Assumptions \ref{ass:WassersteinApprox}, \ref{ass:LipschitzKernel}, and \ref{ass:CouplingBall} 
 hold. Then
 \be
 d(\delta_x \P, \delta_y \P_\epsilon) \le \bar \alpha d(x,y) + \epsilon.
 \ee
\end{corollary}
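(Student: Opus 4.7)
The plan is to mimic the proof of Theorem~\ref{thm:TVperturb} essentially verbatim, which is possible because the remark immediately following that theorem explicitly observes that the argument requires only that $\P$ be a strict contraction in the chosen metric and that the closeness assumption hold in that same metric. Here $d$ (extended to probability measures via its dual formulation in \eqref{eq:DualMetric}) is a genuine metric on probability measures, so the triangle inequality is available.

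First I would introduce $\delta_y \P$ as an intermediate measure and split
\be
d(\delta_x \P, \delta_y \P_\epsilon) \le d(\delta_x \P, \delta_y \P) + d(\delta_y \P, \delta_y \P_\epsilon).
\ee
Next I would bound the first term using Theorem~\ref{thm:UniformWasserstein}, which under Assumptions~\ref{ass:LipschitzKernel} and~\ref{ass:CouplingBall} gives the strict contraction $d(\delta_x \P, \delta_y \P) \le \bar\alpha\, d(x,y)$ with $\bar\alpha = \bar\alpha_\gamma \wedge C\delta < 1$. Then I would bound the second term using Assumption~\ref{ass:WassersteinApprox}, which yields $d(\delta_y \P, \delta_y \P_\epsilon) \le \epsilon$ uniformly in $y$. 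Adding the two bounds gives the stated inequality.

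There is no real obstacle in the proof itself; the work has already been done in setting up a metric in which $\P$ contracts (Theorem~\ref{thm:UniformWasserstein}) and in phrasing Assumption~\ref{ass:WassersteinApprox} in the same metric. If anything deserves a sentence of comment, it is just noting that the dual metric induced by the bounded distance \eqref{eq:ddef} satisfies the triangle inequality on probability measures — which is standard for Wasserstein-1 distances built from a base metric — so that the splitting step is legitimate.
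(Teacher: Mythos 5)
Your proof is correct and is exactly the paper's intended argument: the paper presents this as an immediate corollary of Theorem~\ref{thm:TVperturb}, obtained by the same triangle-inequality splitting through an intermediate one-step measure, with the contraction from Theorem~\ref{thm:UniformWasserstein} bounding one term and Assumption~\ref{ass:WassersteinApprox} bounding the other. Nothing further is needed.
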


Analogues of the various corollaries of Theorem \ref{thm:TVperturb} in the Wasserstein metric then follow immediately.
To exhibit a different approach to proving such results, we give the following analogue of \eqref{eq:dPnPen} for time
averages. The proof is similar to the proof of Theorem \ref{thm:ErrorBound} in the use of the Poisson
equation.
\begin{theorem} \label{thm:WassersteinTimeAvgBound}
 Suppose Assumptions \ref{ass:WassersteinApprox}, \ref{ass:LipschitzKernel}, and \ref{ass:CouplingBall} 
 hold. Put $\widetilde \varphi = \varphi - \mu \varphi$. Then
 \be
 \mathbf E \left[\frac1n \sum_{k=0}^{n-1} \widetilde \varphi(X_k^\epsilon)\right] &\le \frac1n \frac2{1-\bar \alpha} + \frac{\epsilon}{1-\bar \alpha}.
 \ee
\end{theorem}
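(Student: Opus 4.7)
The plan is to follow the Poisson-equation strategy indicated by the authors. Define the potential
\[
U = \sum_{k=0}^{\infty} \P^k \widetilde \varphi,
\]
which formally satisfies $U - \P U = \widetilde \varphi$. I need two estimates on $U$: a supremum bound and a Lipschitz-$d$ bound. Since $\mu \widetilde \varphi = 0$, I can write $\P^k \widetilde \varphi(x) = \int \widetilde \varphi \, d(\delta_x \P^k - \mu)$ and apply the duality of $d$ to get $|\P^k \widetilde \varphi(x)| \le \lvertiii \widetilde \varphi \rvertiii_d \cdot d(\delta_x \P^k, \mu \P^k) \le \bar \alpha^k \lvertiii \widetilde \varphi \rvertiii_d$ by Theorem~\ref{thm:UniformWasserstein} (and using that the distance $d$ is bounded by $1$ so $d(\delta_x,\mu) \le 1$). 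Summing the geometric series yields $\|U\|_\infty \le \lvertiii \widetilde \varphi \rvertiii_d/(1-\bar \alpha)$. The same contraction applied to $\lvertiii \P^k \widetilde \varphi \rvertiii_d \le \bar \alpha^k \lvertiii \widetilde \varphi \rvertiii_d$ gives $\lvertiii U \rvertiii_d \le \lvertiii \widetilde \varphi \rvertiii_d/(1-\bar \alpha)$. Taking $\lvertiii \varphi \rvertiii_d \le 1$ (which is the normalization implicit in the statement, since $\lvertiii \widetilde \varphi \rvertiii_d = \lvertiii \varphi \rvertiii_d$), both bounds are at most $1/(1-\bar \alpha)$.

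Next I telescope. Using $\widetilde \varphi = U - \P U$,
\[
\sum_{k=0}^{n-1} \widetilde \varphi(X_k^\epsilon) = U(X_0^\epsilon) - U(X_n^\epsilon) + \sum_{k=0}^{n-1} \bigl[U(X_{k+1}^\epsilon) - (\P U)(X_k^\epsilon)\bigr].
\]
Taking expectations, the boundary term is controlled by $|U(X_0^\epsilon) - U(X_n^\epsilon)| \le 2\|U\|_\infty \le 2/(1-\bar \alpha)$. For each summand, since $X_{k+1}^\epsilon$ has conditional law $\delta_{X_k^\epsilon} \P_\epsilon$,
\[
\mathbf E\bigl[U(X_{k+1}^\epsilon) - (\P U)(X_k^\epsilon)\bigr] = \mathbf E\bigl[(\P_\epsilon U - \P U)(X_k^\epsilon)\bigr],
\]
and by the duality pairing plus Assumption~\ref{ass:WassersteinApprox},
\[
|(\P_\epsilon U - \P U)(x)| \le \lvertiii U \rvertiii_d \cdot d(\delta_x \P_\epsilon, \delta_x \P) \le \frac{\epsilon}{1-\bar \alpha}.
\]

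Assembling the pieces gives
\[
\mathbf E\!\left[\sum_{k=0}^{n-1} \widetilde \varphi(X_k^\epsilon)\right] \le \frac{2}{1-\bar \alpha} + n \cdot \frac{\epsilon}{1-\bar \alpha},
\]
and dividing by $n$ yields the claimed bound. The one step that deserves the most care, and which I'd call the main obstacle, is justifying that the Neumann series defining $U$ converges pointwise and has the uniform bounds above: the contraction in Theorem~\ref{thm:UniformWasserstein} is stated for measures, so one has to pass from contraction of $d(\delta_x \P^k, \mu)$ to pointwise decay of $\P^k \widetilde \varphi(x)$ via duality, and it is essential that $d$ is bounded (capped at one) for the bound $d(\delta_x, \mu) \le 1$ to hold without any Lyapunov weighting.
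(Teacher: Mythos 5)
Your proof is correct and follows essentially the same route as the paper: the same Poisson-equation potential $U=\sum_k \P^k\widetilde\varphi$, the same Lipschitz bound $\lvertiii U\rvertiii_d \le (1-\bar\alpha)^{-1}$ from the one-step contraction, and the same decomposition (your telescoping sum $\sum_k [U(X_{k+1}^\epsilon)-\P U(X_k^\epsilon)]$ is exactly the paper's martingale term, which vanishes in expectation, plus the $(\P_\epsilon-\P)U$ error term). The only cosmetic difference is that you bound the boundary term via $2\|U\|_\infty$ rather than via the Lipschitz seminorm and the cap $d\le 1$; both give $2/(1-\bar\alpha)$.
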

\begin{proof}
As in the proof of Theorem \ref{thm:ErrorBound}, we work initially with the generator and Poisson equation. Define
$L = \P-I$ and $U(x) = \sum_{k=0}^{\infty} \P^k \widetilde \varphi(x)$,
with $\phi \in \Lip_1(d)$, so that $LU = -\widetilde \varphi$. Then, with the same notation as in \eqref{eq:PoissonEquation},
\be
\frac1n \sum_{k=0}^{n-1} \widetilde \varphi(X_k^\epsilon) = \frac{U(X_0^\epsilon) - U(X_n^\epsilon)}n + \frac1n M_n^\epsilon + \frac1n \sum_{k=0}^{n-1} (\P_\epsilon - \P)U(X_k^\epsilon), \label{eq:PoisDecomp}
\ee
where $M_n^\epsilon$ is a Martingale. Observe that
\be
U(x) - U(y) = \sum_{k=0}^{\infty} \P^k \varphi(x) - \P^k \varphi(y) \le \sum_{k=0}^{\infty} \bar \alpha^k d(x,y) = \frac1{1-\bar \alpha} d(x,y),
\ee
so that if $\varphi \in \Lip_1(d)$, then $U \in \Lip_{\frac1{1-\bar \alpha}}(d)$. So applying Assumption \ref{ass:WassersteinApprox},
\be
\mathbf E \frac1n \sum_{k=0}^{n-1} \widetilde \varphi(X_k^\epsilon) &= \frac{\mathbf E[U(X_0^\epsilon) - U(X_n^\epsilon)]}n + \frac1n \sum_{k=0}^{n-1} \mathbf E (\P_\epsilon - \P)U(X_k^\epsilon) \\
&\le \frac1n \frac2{1-\bar \alpha} + \frac1n n \frac{\epsilon}{1-\bar \alpha} = \frac1n \frac2{1-\bar \alpha} + \frac{\epsilon}{1-\bar \alpha},
\ee
completing the proof.
\end{proof}
Clearly, the difference relative to the total variation case is in proving a strict contraction for $\P$, for 
which the conditions differ. Conditions \ref{ass:CouplingBall} and \ref{ass:LipschitzKernel} can be compared 
directly to Doeblin's condition $\sup_{x,y} d_0(\delta_x \P,\delta_y \P_\epsilon)$, with $d_0$ the ordinary
total variation metric, which gives a strict contraction in total variation. In contrast, in the Wasserstein
metric we need two conditions: uniform topological irreducibility and a (local) d-Lipschitz condition on the 
exact kernel $\P$. This is similar to the weighted case in the next section, for which we require three conditions on $\P$,
in contrast to the two conditions needed for Theorem \ref{thm:HM2011}.

\subsection{$V$-weighted Wasserstein-like semimetrics} \label{sec:WeightedWasserstein}
We now consider a non-metric notion of convergence more appropriate for the unbounded state space setting commonly encountered in applications. 
The results in this section are alternatives to the results in Section \ref{sec:Bounds} that are better suited to 
the case where $\P_\epsilon$ involves discretization, or where the total variation metric is too strong to be useful.
For example, if $\P_\epsilon$ utilizes Gaussian approximations to sums of discrete random variables, one often wants to 
use the central limit theorem and Berry-Esseen bounds to obtain an approximation error result. These results will hold
in Wasserstein metrics, but not total variation, making the following results more appropriate to studying these types
of approximations. We note that some additional effort is needed to actually obtain a CLT using the machinery in this section,
see e.g. \citet[Section 4.1.1]{hairer2014spectral}.

Our development builds on the weak Harris theorem of \cite{hairer2011asymptotic}. 
Suppose that $V$ is a continuous Lyapunov function of $\P$. A function $d: \X \times \X \to \R_+$ is said to
be ``distance-like'' if it is symmetric, lower semicontinuous, and satisfies $d(x,y) = 0 \Leftrightarrow x = y$; that it,
$d$ is a lower semicontinuous semimetric.
Here we will assume that $d: \X \times \X \to [0,1]$; see \cite[Remark 4.7]{hairer2011asymptotic} for a discussion
of why this restriction is innocuous.
For a distance-like function $d$, define a positive function 
on probability measures $d(\mu,\nu)$ by 
\be \label{eq:CouplingDistanceLike}
d(\mu,\nu) = \inf_{\Gamma \in \mc C(\mu,\nu)} \int_{\X \times \X} d(x,y) \Gamma(dx,dy).
\ee
The function $d$ will play the same role as the metric $1 \wedge \delta^{-1} |x-y|$ in the unweighted Wasserstein case,
or $2 \1\{ x \ne y\}$ in the unweighted total variation case. Now, 
for $\beta> 0$, define a weighted form of $d(x,y)$ by
\be 
\tilde d_\beta(x,y) = \sqrt{d(x,y) (2 + \beta V(x) + \beta V(y))},
\ee
with $\tilde d_\beta(\mu,\nu)$ as in \eqref{eq:CouplingDistanceLike}. Since $V$ is continuous, $\tilde d_\beta$
is also distance-like, and is in some sense analogous to the $V$-weighted metric $d_\beta$ in \eqref{eq:dBeta}.

We now give conditions that are sufficient to ensure that $\P$ has a ``spectral gap'' 
in $\tilde d_\beta$ for some $\beta > 0$ (in the sense of Theorem \ref{thm:WtdWasserstein}). 
We assume that $\P$ satisfies a contraction condition with respect to the distance-like function $d$.
\begin{assumption} \label{ass:contracting}
The distance-like function $d : \X \times \X \to [0,1]$ is contracting for $\P$ if there exists $\bar \alpha_0 \in (0,1)$ such that
$d(\delta_x \P, \delta_y \P) \le \bar \alpha_0 d(x,y)$, 
for every $x,y$ such that $d(x,y) < 1$
\end{assumption} 
and that sublevel sets of $V$ satisfy a condition similar to the minorization condition used in Section \ref{sec:Bounds}
\begin{assumption} \label{ass:dmsall}
For every $R>0$, sublevel sets $\mathcal S(R) = \{ x: V(x) < R\}$ of $V$ satisfy
\be
\sup_{x,y \in \mathcal S(R)} d(\delta_x \P,\delta_y \P) < (1-\alpha), 
\ee
for an $\alpha \in (0,1)$ that depends on $R$.
\end{assumption}

Under Assumptions \ref{ass:contracting} and \ref{ass:dmsall}, we have the following result. This result is essentially a discrete-time version of the ``weak Harris'' theorem \cite[Theorem 4.8]{hairer2011asymptotic}, with the constant $\beta$ tuned such that $\tilde d_\beta$ contracts in one step, resulting in an analogue of Theorem \ref{thm:HM2011}. The argument given here is more natural in the discrete time setting and has many parallels with the discrete time proof of the Harris theorem in \cite{hairer2011yet}, emphasizing the similarity of the ``weak'' and ordinary Harris theorem. Enough of the details differ from \cite{hairer2011asymptotic} that we give the entire proof.
\begin{theorem} \label{thm:WtdWasserstein}. 
Suppose Assumptions \ref{ass:contracting} and \ref{ass:dmsall} hold for a distance-like function $d : \X \times \X \to [0,1]$ and $V$ is a continuous Lyapunov function of $\P$. Then there exists $\bar \alpha \in (0,1)$ and $\beta > 0$ such that
$\tilde d_\beta(\delta_x P, \delta_y P) \le \bar \alpha \tilde d_\beta(x,y)$. 
\end{theorem}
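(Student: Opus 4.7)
The plan is to adapt the Harris-type argument from Theorem~\ref{thm:HM2011} to the square-rooted semimetric $\tilde d_\beta$, mirroring the weak Harris theorem of \cite{hairer2011asymptotic} but with $\beta$ chosen so that a genuine one-step contraction holds.

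My first step is a Cauchy--Schwarz reduction. For any coupling $\Gamma \in \mc C(\delta_x\P,\delta_y\P)$,
\be
\int \sqrt{d(a,b)(2+\beta V(a)+\beta V(b))}\,\Gamma(da,db) \le \Bigl(\int d(a,b)\Gamma(da,db)\Bigr)^{1/2}\bigl(2+\beta \P V(x)+\beta \P V(y)\bigr)^{1/2}.
\ee
Taking $\Gamma$ to be an optimal coupling for $d(\delta_x\P,\delta_y\P)$ and then taking an infimum on the left reduces the target inequality to
\be
\tilde d_\beta(\delta_x\P,\delta_y\P)^2 \le d(\delta_x\P,\delta_y\P)\bigl(2+\beta \P V(x)+\beta \P V(y)\bigr).
\ee
Combined with the Lyapunov bound $\P V(x)+\P V(y)\le \gamma(V(x)+V(y))+2K$, everything can then be bounded in terms of $d(\delta_x\P,\delta_y\P)$ and $S:=V(x)+V(y)$.

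Next I split into three cases and bound $\tilde d_\beta(\delta_x\P,\delta_y\P)^2$ against $\tilde d_\beta(x,y)^2$ in each. Case (i): $d(x,y)<1$. Assumption~\ref{ass:contracting} gives $d(\delta_x\P,\delta_y\P)\le \bar\alpha_0 d(x,y)$, and we need
\be
\bar\alpha_0\bigl(2+\beta\gamma S+2\beta K\bigr)\le \bar\alpha^2\bigl(2+\beta S\bigr),
\ee
which, being affine in $S\ge 0$, holds iff it holds at $S=0$ and in the large-$S$ limit; this requires $\bar\alpha^2\ge \bar\alpha_0(1+\beta K)$ and $\bar\alpha^2\ge \bar\alpha_0\gamma$. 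Case (ii): $d(x,y)=1$ and $S\ge R_0$. Using only $d\le 1$ and Lyapunov,
\be
\frac{\tilde d_\beta(\delta_x\P,\delta_y\P)^2}{\tilde d_\beta(x,y)^2}\le \frac{2+\beta\gamma S+2\beta K}{2+\beta S},
\ee
which, since $\gamma<1$, can be made smaller than any $\bar\alpha^2\in(\gamma,1)$ by taking $R_0$ sufficiently large (uniformly in the later choice of $\beta$, because the ratio tends to $\gamma$ as $S\to\infty$ and is bounded by $\gamma+2K/(R_0)$ for $S\ge R_0$ when $\beta$ is small). Case (iii): $d(x,y)=1$ and $S<R_0$, i.e. $x,y\in\mathcal S(R_0)$. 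Assumption~\ref{ass:dmsall} gives $d(\delta_x\P,\delta_y\P)\le 1-\alpha$ for some $\alpha=\alpha(R_0)>0$, so the ratio is at most $(1-\alpha)(1+\beta(\gamma R_0/2+K))$, which is less than $1$ for $\beta$ small.

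The parameter tuning runs in order: first pick $\bar\alpha<1$ with $\bar\alpha^2>\bar\alpha_0\vee\gamma\vee(1-\alpha(R_0))$ for some to-be-chosen $R_0$; then choose $R_0$ large enough to close case (ii); and finally choose $\beta\in(0,1]$ small enough that the $O(\beta)$ corrections in (i) and (iii) are absorbed into $\bar\alpha^2$, which is possible because $R_0$ is already fixed so $\alpha(R_0)$ does not deteriorate. The main obstacle is precisely this triangular tuning: $\alpha$ depends on $R_0$ and shrinks as $R_0$ grows, so I must first fix $\bar\alpha$ below $1$ and above $\bar\alpha_0$, then choose $R_0$ large enough for case (ii) but small enough that $1-\alpha(R_0)<\bar\alpha^2$, and only then pick $\beta$. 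A secondary subtlety is that Cauchy--Schwarz is the one place the square-root form of $\tilde d_\beta$ is used essentially; without it, case (i) would lose the factor of $d(x,y)$ needed to contract.
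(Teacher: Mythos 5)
Your overall strategy --- the Cauchy--Schwarz reduction $\tilde d_\beta(\delta_x\P,\delta_y\P)^2 \le d(\delta_x\P,\delta_y\P)\,\bigl(2+\beta \P V(x)+\beta\P V(y)\bigr)$ followed by the three-way split on $d(x,y)<1$ versus $d(x,y)=1$ with $S=V(x)+V(y)$ large or small --- is exactly the paper's, and your cases (i) and (iii) are handled correctly. The gap is in case (ii) and in the order in which you tune the constants. The claim that the case (ii) ratio is ``bounded by $\gamma+2K/R_0$ for $S\ge R_0$ when $\beta$ is small,'' and hence that $R_0$ can be chosen uniformly in the later choice of $\beta$, is false. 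Writing the ratio out,
\be
\frac{2+2\beta K+\beta\gamma S}{2+\beta S} \;=\; \gamma+\frac{2(1-\gamma)+2\beta K}{2+\beta S},
\ee
one sees that for fixed $S$ (or fixed $R_0$) it tends to $1$, not to $\gamma$, as $\beta\to 0$: the additive constant $2$ in the weight does not vanish, so the Lyapunov contraction of the $V$-part buys nothing once $\beta$ is negligible. Concretely, for a pre-fixed target $\bar\alpha^2<1$ the requirement $2+2\beta K+\beta\gamma S\le\bar\alpha^2(2+\beta S)$ for all $S\ge R_0$ forces $\beta R_0\ge 2(1-\bar\alpha^2)/(\bar\alpha^2-\gamma)$, so you cannot first fix $R_0$ to close case (ii) and afterwards shrink $\beta$ to close cases (i) and (iii); your stated tuning order collapses.

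The repair is the one the paper uses: do not demand a $\beta$-uniform constant in case (ii), and do not fix the final $\bar\alpha$ in advance --- the theorem only asserts that \emph{some} $\bar\alpha<1$ exists. Pick $\bar\gamma\in(\gamma,1)$ and $R_0$ with $(\bar\gamma-\gamma)R_0>2K$, a condition involving only $\gamma$ and $K$, so that $2+2\beta K+\beta\gamma S\le 2+\beta\bar\gamma S$ for all $S\ge R_0$; since the quotient $(2+\beta\bar\gamma S)/(2+\beta S)$ is decreasing in $S$, the case (ii) ratio is at most $(2+\beta\bar\gamma R_0)/(2+\beta R_0)$, which is strictly below one for every $\beta>0$ even though its value depends on $\beta$. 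Then take $\alpha=\alpha(R_0)$ from Assumption \ref{ass:dmsall}, choose $\beta$ small enough to close your cases (i) and (iii), and only at the end set $\bar\alpha^2$ to be the maximum of the three case constants. With that reordering your argument coincides with the paper's proof.
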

\begin{proof}
First consider the case $d(x,y) = 1$ and $V(x) + V(y) \ge R$ for $R>0$. 
In this case
\be
\tilde d_\beta^2(x,y) &= 2 + \beta (V(x) + V(y)) 
\ee
Choosing $\bar \gamma \in (\gamma,1)$ and using the Lyapunov structure, we have
\be
\tilde d_{\beta}(\delta_x \P, \delta_y \P)^2 &= \left( \inf_{\Gamma \in \mathcal C(\delta_x \P, \delta_y \P)} \int \tilde d_\beta(u,v) \Gamma(du,dv) \right)^2 \\
&\le \left( \int \tilde d_\beta(u,v) \Gamma^*(du,dv) \right)^2 \le \int \tilde d^2_\beta(u,v) \Gamma^*(du,dv) \\
&= \int (2 + \beta V(u) + \beta V(v))  \Gamma^*(du,dv) \\
&\le 2 + \beta \gamma (V(x) + V(y)) + 2 \beta K \\
&\le 2 + \beta \bar{\gamma} (V(x) + V(y)) + \beta (2K - (\bar \gamma - \gamma) R),
\ee
where $\Gamma^*$ is any coupling. 
So if $R$ is large enough that $(\bar \gamma - \gamma) R > 2K$, then it follows
$\tilde d_{\beta}(\delta_x \P, \delta_y \P)^2 \le \bar \alpha_1 \tilde d_\beta^2(x,y)$ 
for $\bar \alpha_1 < 1$. Clearly this choice of $R$ implies that
 $R>\frac{2K}{1-\gamma}$, a familiar quantity from
the total variation case. Up to this point, $\beta$ could be any positive number, with only
the exact value of $\bar \alpha_1$ (and not the existence of $\bar \alpha_1 < 1$) depending 
on the value of $\beta$. The remaining two parts determine the value of $\beta$.

Next, suppose $d(x,y) < 1$ and let $\Gamma_{x,y} \in \mc C(\delta_x \P, \delta_y \P)$. Then by Cauchy-Schwartz
\be
\tilde d_\beta(\delta_x P, \delta_y P)^2 &\le \inf_{\Gamma_{x,y}} \int d(u,v) \Gamma_{x,y}(du,dv) \int (2 + \beta V(u) + \beta V(v)) \Gamma_{x,y}(du,dv) \\
&\le \bar \alpha_0 d(x,y) (2 + \beta \gamma (V(x) + V(y)) + 2 \beta K). 
\ee
Now for $\bar \alpha_2 \in (\bar \alpha_0,1)$ we can choose $\beta$ sufficiently small that $\bar \alpha_0 (2\beta K + 2) \le 2 \bar \alpha_2$, giving
$\tilde d_\beta(\delta_x P, \delta_y P)^2 \le \bar \alpha_2 \tilde d^2_\beta(x,y)$. 
For example, if we take $\bar \alpha_2 = \frac{\bar \alpha_0+1}2$, we need $\beta \le \frac1{2K} \left( \frac{1}{\bar \alpha_0} -1 \right)$. 

Finally consider the case where $d(x,y)=1$ and $V(x)+V(y) \le R$, so that $\tilde d_\beta^2(x,y) > 2$. We use the minorization condition on sublevel sets to conclude that there exists a coupling $\Gamma_{x,y} \in \mathcal C(\delta_x P, \delta_y P)$ for which
\be
\tilde{d}_\beta(\delta_x \P, \delta_y \P)^2 &\le \int d(u,v) \Gamma_{x,y}(du,dv) \int (2 + \beta V(u) + \beta V(v)) \Gamma_{x,y}(du,dv) \\
&\le (1-\alpha) (2 + 2 \beta K + 2 \beta \gamma (V(x) + V(y))) \\
&\le 2-2\alpha  +  2 \beta (K +  \gamma R)
\ee
Since $\beta$ is independent of $\alpha$, we can, for example, take $\beta = \frac{\alpha}{2(K+\gamma R)}$ so that 
\be
\tilde{d}_\beta(\delta_x \P, \delta_y \P)^2 \le 2 - 2 \alpha + \alpha = 2-\alpha < (1-\alpha/2) \tilde d^2_\beta(x,y),
\ee
since $\tilde d^2_\beta(x,y) > 2$. So taking
$\beta = \frac{\alpha}{2(K+\gamma R)} \wedge \frac{1}{2K} \left( \frac{1}{\bar \alpha_0} - 1 \right)$.
concludes the proof.
\end{proof}
The following is a consequence of the convexity of $\tilde d_\beta$.
\begin{corollary}[\citet{hairer2011asymptotic}]
 Suppose Theorem \ref{thm:WtdWasserstein} holds. Then we have for any two probability measures $\nu_1,\nu_2$ on $\X$
 \be
 \tilde d_\beta(\nu_1 \P, \nu_2 \P) \le \bar \alpha \tilde d_\beta(\nu_1,\nu_2)
 \ee
\end{corollary}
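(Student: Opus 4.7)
The plan is to lift the one-step contraction from Dirac masses, which Theorem~\ref{thm:WtdWasserstein} already provides, to arbitrary probability measures by exploiting the joint convexity of $\tilde d_\beta$ inherited from its coupling definition~\eqref{eq:CouplingDistanceLike}. The key identity is that for any probability measure $\nu$ one has $\nu \P = \int \delta_x \P \, \nu(dx)$, so a coupling of $\nu_1,\nu_2$ disintegrates into a weighted mixture of the one-point transitions whose pairwise distances Theorem~\ref{thm:WtdWasserstein} already controls.

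Concretely, I would fix an arbitrary coupling $\Gamma \in \mathcal C(\nu_1,\nu_2)$ and observe the two marginal identities
\begin{align*}
\nu_1 \P = \int_{\X \times \X} \delta_x \P \, \Gamma(dx,dy), \qquad \nu_2 \P = \int_{\X \times \X} \delta_y \P \, \Gamma(dx,dy).
\end{align*}
For each $(x,y)$ I would then select a coupling $\Gamma_{x,y} \in \mathcal C(\delta_x \P, \delta_y \P)$ that realizes (or, up to $\varepsilon$, nearly realizes) the infimum defining $\tilde d_\beta(\delta_x \P, \delta_y \P)$, and assemble the glued measure
\begin{align*}
\tilde \Gamma(A \times B) = \int_{\X \times \X} \Gamma_{x,y}(A \times B) \, \Gamma(dx,dy),
\end{align*}
which is a coupling of $\nu_1 \P$ and $\nu_2 \P$ by direct inspection of the marginals.

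With this in hand, Fubini together with the conclusion of Theorem~\ref{thm:WtdWasserstein} gives
\begin{align*}
\tilde d_\beta(\nu_1 \P, \nu_2 \P) &\le \int \tilde d_\beta(u,v) \, \tilde \Gamma(du,dv) \\
&= \int \!\!\left( \int \tilde d_\beta(u,v) \, \Gamma_{x,y}(du,dv) \right) \Gamma(dx,dy) \\
&\le \int \tilde d_\beta(\delta_x \P, \delta_y \P) \, \Gamma(dx,dy) \\
&\le \bar \alpha \int \tilde d_\beta(x,y) \, \Gamma(dx,dy),
\end{align*}
after which taking the infimum over $\Gamma \in \mathcal C(\nu_1,\nu_2)$ delivers the claim (with an innocuous $\varepsilon \to 0$ passage if no optimizer was used).

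The main technical point that requires care is the measurable selection of the family $(x,y) \mapsto \Gamma_{x,y}$, which is needed in order for the glued measure $\tilde \Gamma$ to be well defined and for Fubini to apply. Because $\X$ is Polish, the set-valued map sending $(x,y)$ to $\mathcal C(\delta_x \P, \delta_y \P)$ is measurable with closed values in the Polish space of probability measures on $\X \times \X$, and the functional $\Gamma \mapsto \int \tilde d_\beta \, d\Gamma$ is lower semicontinuous since $\tilde d_\beta$ is (as the product of a lower semicontinuous distance-like function and a continuous Lyapunov weight). A standard application of the Kuratowski--Ryll-Nardzewski measurable selection theorem then produces an $\varepsilon$-optimal measurable selection $(x,y) \mapsto \Gamma_{x,y}$, after which letting $\varepsilon \downarrow 0$ completes the argument.
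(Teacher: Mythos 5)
Your proposal is correct and is essentially the paper's own argument: the paper simply invokes ``the convexity of $\tilde d_\beta$'' (citing \citet{hairer2011asymptotic}), and the gluing construction you give --- disintegrating $\nu_i\P = \int \delta_x\P\,\nu_i(dx)$ over a coupling of $\nu_1,\nu_2$ and integrating $\varepsilon$-optimal couplings of the one-point transitions --- is precisely the standard proof of that convexity. Your additional care with the measurable selection of $(x,y)\mapsto\Gamma_{x,y}$ is a detail the paper leaves implicit but is handled appropriately.
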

Controlling the approximation error in $\tilde d_\beta$ is slightly complicated by the fact that it is not a metric on $\X$; specifically, it does not satisfy the triangle inequality.  However, the following weaker assumption is enough to obtain the desired results.
\begin{assumption} \label{ass:WeakTriangle}
There exists $C < \infty$ such that
 \be \label{eq:WeakTriangle}
 \tilde d_\beta(x,y) \le C(\tilde d_\beta(x,z) + \tilde d_\beta(z,y)).
 \ee
 for all $x,y,z \in \X$.
 \end{assumption}

We recall the following result from \cite{hairer2011asymptotic}, which guarantees that $\tilde d_\beta$ satisfies Assumption \ref{ass:WeakTriangle} under mild conditions on $V$ when $d(x,y) = 1 \wedge |x-y|$, which is the choice we use later in applications.

\begin{remark}
 Suppose $\X$ is a Banach space, $d(x,y)=1 \wedge |x-y|$, and $V$ grows at most exponentially with $|x|$. Then there exists $0 < C < \infty$ such that Assumption \ref{ass:WeakTriangle} holds. 
\end{remark}

Using this we obtain the following analogue of Theorem \ref{thm:TVperturb}. This result effectively
shows that if Assumption \ref{ass:WeakTriangle} holds and we have a one-step approximation
error bound in $\tilde d_\beta$, then we can bound closeness of invariant measures and 
obtain an approximation error result like Theorem \ref{thm:TVperturb} on a time scale that
is lengthened by $(-\log C) (\log \bar \alpha)^{-1}$. 

\begin{theorem} \label{thm:WtdWErrorBound}
Suppose there exists a function $c(n) : \mathbb N \to [0,\infty)$ and a Lyapunov function $\tilde V \le V + \frac{K_\epsilon}{1-\gamma_\epsilon}$ of $\P$ and $\P_\epsilon$ such that for each $n > 0$
\be \label{eq:nStepError}
\sup_x \tilde d_\beta(\delta_x \P^n,\delta_x \P^n_\epsilon) \le \epsilon c(n) (1+\delta \tilde V(x)). 
\ee 
Then for any $\bar \alpha^* \in (0,1)$ there exists $n > (-\log C) (\log \bar \alpha)^{-1}$ such that
\be
\tilde d_\beta (\nu \P^n, \mu \P^n_\epsilon ) \le \bar \alpha^* \tilde d_\beta(\mu,\nu) + \epsilon C c(n) (1+\delta \tilde V(x))
\ee
and if $\mu,\mu_\epsilon$ are any invariant measure of $\P,\P_\epsilon$, respectively then for any $n$ such that
$\bar \alpha^n < C^{-1}$ we have
\be
\tilde d_\beta (\mu, \mu_\epsilon ) \le \frac{C \epsilon c(n)}{1-C \bar \alpha^n} (1+ \delta \mu_\epsilon \tilde V).
\ee
In particular, if Assumption \ref{ass:closeness} holds for $\tilde d_\beta$ then we can take $c(n) = \sum_{j=1}^n C^j \bar \alpha^{j-1}$, so no additional assumptions are needed when $d$ satisfies \eqref{eq:WeakTriangle}. 
\end{theorem}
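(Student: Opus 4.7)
My plan is to exploit Assumption~\ref{ass:WeakTriangle} by inserting an intermediate measure between $\nu \P^n$ and $\mu \P^n_\epsilon$ that splits the distance into a term controlled by the contraction of $\P$ and a term controlled by the approximation hypothesis \eqref{eq:nStepError}. Inserting $\mu \P^n$ gives
\be
\tilde d_\beta(\nu \P^n, \mu \P^n_\epsilon) \le C \bigl[\tilde d_\beta(\nu \P^n, \mu \P^n) + \tilde d_\beta(\mu \P^n, \mu \P^n_\epsilon)\bigr].
\ee
The first term is bounded by $\bar\alpha^n \tilde d_\beta(\nu,\mu)$ via $n$ applications of Theorem~\ref{thm:WtdWasserstein}. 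For the second term I push the pointwise hypothesis through $\mu$ by a standard convexity-via-coupling argument: draw a common $y \sim \mu$, then use the optimal $(\delta_y\P^n,\delta_y\P^n_\epsilon)$-coupling, which yields $\tilde d_\beta(\mu \P^n, \mu \P^n_\epsilon) \le \int \tilde d_\beta(\delta_y \P^n, \delta_y \P^n_\epsilon)\, \mu(dy) \le \epsilon c(n)(1+\delta \mu \tilde V)$. Choosing $n$ large enough that $C \bar\alpha^n \le \bar\alpha^*$, which since $\bar\alpha^* < 1$ requires $n > (-\log C)/\log\bar\alpha$, produces the claimed time-$n$ bound.

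The invariant-measure bound is then a fixed-point argument: write $\tilde d_\beta(\mu,\mu_\epsilon) = \tilde d_\beta(\mu\P^n, \mu_\epsilon \P^n_\epsilon)$ and apply the same split (with $\nu \leftarrow \mu$, $\mu \leftarrow \mu_\epsilon$) to obtain $\tilde d_\beta(\mu,\mu_\epsilon) \le C\bar\alpha^n \tilde d_\beta(\mu,\mu_\epsilon) + C\epsilon c(n)(1+\delta\mu_\epsilon\tilde V)$. For $n$ with $C\bar\alpha^n < 1$, rearranging yields the stated closed-form expression.

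For the final claim, which says that if Assumption~\ref{ass:closeness} holds for $\tilde d_\beta$ then one may take $c(n) = \sum_{j=1}^n C^j \bar\alpha^{j-1}$, my plan is to telescope through the hybrid measures $m_j = \delta_x \P^j_\epsilon \P^{n-j}$ for $j=0,\dots,n$. Writing $m_j = (\delta_x \P^j_\epsilon \P)\P^{n-j-1}$ and $m_{j+1} = (\delta_x \P^j_\epsilon \P_\epsilon)\P^{n-j-1}$, applying the contraction of $\P$ a total of $n-j-1$ times and then pushing the one-step bound through by the coupling argument gives
\be
\tilde d_\beta(m_j, m_{j+1}) \le \bar\alpha^{n-j-1}\,\epsilon\bigl(1 + \delta\,\delta_x\P^j_\epsilon\tilde V\bigr).
\ee
The Lyapunov condition for $\P_\epsilon$ combined with the bound $\tilde V \le V + K_\epsilon/(1-\gamma_\epsilon)$ controls $\delta_x\P^j_\epsilon\tilde V$ uniformly in $j$ in terms of $\tilde V(x)$, so the right factor reduces (up to absorbed constants) to $(1+\delta\tilde V(x))$. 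Iterating the weak triangle inequality across the $n$ consecutive segments yields geometrically increasing powers of $C$ on each segment, and after bookkeeping (using $C \ge 1$ to round up the first segment) the total coefficient collapses to $\sum_{j=1}^n C^j \bar\alpha^{j-1}$.

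The main obstacle will be the bookkeeping associated with the weak triangle inequality, since each invocation multiplies by another factor of $C$ and the accumulated coefficients on the telescoped segments are not all equal, as they would be under a true triangle inequality. A secondary technical issue is controlling $\tilde V$ across iterates of $\P_\epsilon$: the hypothesis $\tilde V \le V + K_\epsilon/(1-\gamma_\epsilon)$ is designed precisely so that the factor $(1+\delta\tilde V(x))$ is preserved through the telescope rather than growing linearly in $j$, which is what keeps the geometric structure of $c(n)$ intact.
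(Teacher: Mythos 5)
Your proposal is correct and follows essentially the same route as the paper: the $n$-step bound comes from the weak triangle inequality with the intermediate measure $\mu \P^n$ plus contraction and the hypothesis \eqref{eq:nStepError}, the invariant-measure bound is the same fixed-point rearrangement, and your telescope through $m_j = \delta_x \P^j_\epsilon \P^{n-j}$ is just the unrolled form of the paper's one-step recursion $\tilde d_\beta(\delta_x \P^n,\delta_x \P^n_\epsilon) \le C\{\bar\alpha\, \tilde d_\beta(\delta_x \P^{n-1},\delta_x \P^{n-1}_\epsilon)+\epsilon(1+\delta\, \delta_x \P_\epsilon^{n-1} V)\}$. The one point to be careful about is the nesting order of the weak triangle inequality: the outermost application must split off the segment from $m_{n-1}$ to $m_n$ (the one carrying no $\bar\alpha$ factor), so that large powers of $C$ pair with large powers of $\bar\alpha$ and the first segment's coefficient $C^{n-1}\bar\alpha^{n-1}$ rounds up to $C^n\bar\alpha^{n-1}$ as you indicate --- nesting in the opposite direction yields roughly $\sum_k C^k \bar\alpha^{n-k}$, which is not the stated $c(n)$ and is not dominated by it in general.
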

\begin{proof}
By \eqref{eq:WeakTriangle} it follows
\be
\tilde d_\beta(\delta_y \P^n, \delta_x \P^n_\epsilon) \le C \{ \bar \alpha^n \tilde d_\beta(x,y) + \epsilon c(n) (1+\delta \tilde V(x)) \},
\ee
so that if $n$ is large enough that $C \bar \alpha^n < 1$, we do indeed have 
\be
\tilde d_\beta(\delta_y \P^n, \delta_x \P^n_\epsilon) \le \bar \alpha^* d_\beta(x,y) + \epsilon C c(n) (1+\delta \tilde V(x)),
\ee
and we can make $\bar \alpha^*$ arbitrarily close to zero by choosing $n$ sufficiently large. Moreover,
for any invariant measures $\mu,\mu_\epsilon$ of $\P,\P_\epsilon$
\be
\tilde d_\beta (\mu,\mu_\epsilon ) \le \frac{C \epsilon c(n)}{1-C \bar \alpha^n} (1+ \delta \mu_\epsilon \tilde V),
\ee 
again for any $n$ such that $C \bar \alpha^n < 1$. 
Further when  Assumption \ref{ass:closeness} holds for a $V$ which is a
Lyapunov function of  $\P_\epsilon$ then \eqref{eq:WeakTriangle} implies
\eqref{eq:nStepError} since 
\be
\tilde d_\beta(\delta_x \P^n, \delta_x \P^n_\epsilon) &\le C \{ \bar \alpha \tilde d_\beta( \delta_x \P^{n-1}, \delta_x \P^{n-1}_\epsilon) + \epsilon (1+\delta \delta_x \P_\epsilon^{n-1} V) \} \\
&\le \epsilon \sum_{j=1}^n C^j \bar \alpha^{j-1} (1+\delta \delta_x \P_\epsilon^{n-j} V ) \\
&\le \epsilon \sum_{j=1}^n C^j \bar \alpha^{j-1} \left\{ 1+\delta \left( \frac{1-\gamma_\epsilon^{n-j}}{1-\gamma_\epsilon} K_\epsilon + \gamma_\epsilon^{n-j} V(x) \right)  \right\} \\
&\le \epsilon \sum_{j=1}^n C^j \bar \alpha^{j-1} \left\{ 1+\delta \left( \frac{K_\epsilon}{1-\gamma_\epsilon} + V(x) \right)  \right\} \\
&\equiv \epsilon c(n) (1+\delta \tilde V(x))
\ee
\end{proof}

\section{Applications}
We begin this section with a result which is helpful in verifying the Assumption \ref{ass:closeness}.
We then
apply the previous sections to 
data augmentation Gibbs sampling, Minibatching Metropolis-Hastings, and 
approximate MCMC for Gaussian process models.

\subsection{Achieving the error condition} \label{sec:Achieving}
For the results in Section \ref{sec:Bounds} to be practical, we need a way to construct approximations
$\P_\epsilon$ that achieve Assumption \ref{ass:closeness} that is broadly
applicable. Often, it is easier to construct approximations satisfying a
condition like
$d_0(\delta_x \P, \delta_x \P_\epsilon) < \epsilon$
than to directly construct an approximating kernel with error depending 
on the Lyapunov function. However, uniform total variation error control
is not enough to show Assumption \ref{ass:closeness}, so we seek an adaptive
total variation error condition that gives Assumption \ref{ass:closeness}. 

Suppose $V$ is a Lyapunov function of both $\P$ and $\P_\epsilon$. Observe that 
for every $\epsilon > 0$ there exists $M_\epsilon(x)<\infty$ such that
\be 
\sup_{|\varphi|<V} \int \varphi(y) \mathbf 1\{|\varphi(y)| > M_\epsilon(x)\} \P(x,dy) < \epsilon/4. \label{eq:BoundedTail}
\ee
A similar condition holds for $\P_\epsilon(x,dy)$ for each $x$; redefine $M_{\epsilon}(x)$ so that the condition in \eqref{eq:BoundedTail} holds for both $\P$ and $\P_\epsilon$. Now suppose 
\be \label{eq:TVBoundMx}
d_0(\delta_x \P, \delta_x \P_\epsilon) < \frac{\epsilon \gamma V(x)}{2 M_{\epsilon}(x)} + \frac{\epsilon}{4 M_\epsilon(x)}.
\ee
Then setting $\mathcal{A}_\epsilon=\{|\varphi(y)| > M_\epsilon(x)\}$ for any $|\varphi| < V$
\be
\int \varphi(y) (\P-\P_\epsilon)(x,dy) &= 
\int \varphi(y) (\mathbf 1_{\mathcal A_\epsilon}(y) + \mathbf 1_{\mathcal A_\epsilon^c}(y)) (\P-\P_\epsilon)(x,dy) \\
&\le \epsilon + \gamma \epsilon V(x) = \epsilon(1+\gamma V(x)).
\ee
In other words, total variation control is good enough, assuming we tune the approximation error in total variation to the current state of the chain. 

 \subsection{Application to Gibbs sampling} \label{sec:Probit}
In this section we consider approximating a Gibbs sampler for binomial probit regression with Gaussian priors. 
This is a generalized linear model, and the log of the target density is given by 
\be \label{eq:Probit}
\log\{ m(x) \} \propto \sum_{i=1}^N z_i \log \frac{\Phi( \langle w_i, x\rangle)}{1-\Phi( \langle w_i, x\rangle)} + m_i \log (1-\Phi( \langle w_i, x\rangle ))- \frac12 \|B^{-1/2}(x-b)\|^2
\ee
where $\Phi$ is the standard Gaussian distribution function. A transition kernel $\P$ with $x$-marginal invariant measure $\mu$ with density $m(x)$ is defined by the data augmentation Gibbs sampler with update rule
\be
\omega_i \mid x &= \sum_{j=1}^{z_i} Z^+_{ij} + \sum_{j=1}^{m_i-z_i} Z^-_{ij} \\
p_{Z^+_{ij}}(z) &\propto \frac{1}{\sqrt{2\pi}} e^{-(z-\langle w_i, x \rangle)/2} \mathbf 1\{ z>0 \}, \quad p_{Z^-_{ij}}(z) \propto \frac{1}{\sqrt{2\pi}} e^{-(z-\langle w_i,x \rangle)/2} \mathbf 1\{ z \le 0 \} \\
x \mid \Omega &\sim \Normal((B^{-1} + M)^{-1} W'\Omega , (B^{-1} + M)^{-1}), \quad M = W'D W.
\ee
Here, $D$ is a diagonal matrix with diagonal entries $m_1,\ldots,m_N$; $W$ is a $N \times p$ matrix with rows consisting of the $w_i$'s; and $\Omega = (\omega_1,\ldots,\omega_N)$ is a $N \times 1$ vector of the $\omega_i$'s. It is common in the literature to analyze $\P$ in the special case of a flat prior on $\beta$, in which case the update is simplified slightly so that the last step becomes
$x \sim \Normal(M^{-1} W'\Omega , M^{-1})$. We proceed to give a result for this case.

An approximating kernel $\P_\epsilon$ of $\P$ can be generated by replacing $\omega_i=U^+_i+U^-_i$ with
\be
U^+_i &\sim \Normal(z_i \mathbf E (Z_{ij}^+), z_i \var(Z_{ij}^+) ), \quad U^-_i &\sim \Normal((m_i-z_i)\mathbf E(Z_{ij}^-), (m_i-z_i) \var (Z_{ij}^-)),
\ee
This changes the sampling cost of the data augmentation step from $\bigO(\sum_i m_i)$ to $\bigO(N)$, a significant savings when the $m_i$ are large. An adaptive approximation can be obtained by using the exact truncated normal sampling when $z_i$ or $m_i-z_i$ is small and using the approximation otherwise. 
We show a Lyapunov function of both $\P$ and $\P_\epsilon$ in the case of a flat prior on $x$. 
\begin{proposition} \label{prop:LyapunovProbit}
 Suppose $0 < \min_i z_i/m_i < 1$. Then $V: \mathbb R^p \to \mathbb R_+$ given by $V(x) = x' W'DW x$ is a Lyapunov function of both $\P$ and $\P_\epsilon$. In particular
 $\P V \le \gamma V + K$ and $\P_\epsilon V \le \gamma V + K$ 
 for some $0< \gamma < 1$ and $K>0$, and we can take $\gamma = 1-(\max_i m_i)^{-1}$.
\end{proposition}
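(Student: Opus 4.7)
The plan is to exploit two clean simplifications. First, the $x$-update step is the same Gaussian draw $X' \mid \Omega \sim \Normal(M^{-1} W'\Omega, M^{-1})$ under both kernels, so direct computation gives $\mathbf E[V(X') \mid \Omega] = p + \Omega' H \Omega$ with $H := W M^{-1} W'$. Second, by construction of $\P_\epsilon$, the Gaussian surrogates $U_i^\pm$ have exactly the first two moments of the truncated-normal sums $\sum_j Z_{ij}^\pm$. Since $\mathbf E[V(X') \mid \Omega]$ is quadratic in $\Omega$, the expectation $\P V(x) = \mathbf E_x V(X')$ depends on the conditional law of $\Omega \mid x$ only through its mean $\mu(x) = \mathbf E_x \Omega$ and diagonal covariance $\diag(\sigma_i^2(x))$, and these two moments coincide under $\P$ and $\P_\epsilon$. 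Consequently $\P V \equiv \P_\epsilon V$, and it suffices to verify the Lyapunov drift for $\P$ alone.

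Next I would exploit the projection structure of $H$: since $D^{1/2} H D^{1/2}$ is the orthogonal projection onto the column span of $D^{1/2} W$, we have $\tr(HD) = p$, the matrix identity $HDW = W$, and the operator bound $v' H v \le v' D^{-1} v$ for every $v \in \mathbb R^N$. Each truncated normal has variance at most $1$, so $\sigma_i^2(x) \le m_i$ and $\tr(H \diag(\sigma_i^2)) \le \tr(HD) = p$. For the mean term, writing $\lambda_i = \langle w_i, x\rangle$ and decomposing $\mu(x) = DWx + r(x)$ with $r_i(x) = z_i \phi(\lambda_i)/\Phi(\lambda_i) - (m_i - z_i)\phi(\lambda_i)/(1 - \Phi(\lambda_i))$, the identity $HDW = W$ and the operator bound give $\mu' H \mu \le V(x) + 2 \sum_i \lambda_i r_i + \sum_i r_i^2/m_i$, and the completed-square identity
\begin{equation*}
m_i \lambda_i^2 + 2 \lambda_i r_i + r_i^2/m_i = (m_i \lambda_i + r_i)^2/m_i = \mu_i(x)^2/m_i
\end{equation*}
collapses this to $\P V(x) \le 2p + \sum_i \mu_i(x)^2/m_i$.

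The remaining task, and the main technical obstacle, is the scalar bound $\mu_i(\lambda)^2 \le m_i^2 (1 - 1/m_*) \lambda^2 + C_i$ for each $i$, where $m_* = \max_j m_j$. Using the Mills-ratio asymptotics $\phi(\lambda)/\Phi(\lambda) \sim -\lambda$ as $\lambda \to -\infty$ (with $\phi/(1-\Phi)$ exponentially small) and the symmetric statement as $\lambda \to +\infty$, one obtains $\mu_i(\lambda) \sim (m_i - z_i)\lambda$ and $\mu_i(\lambda) \sim z_i \lambda$ in the two tails respectively. The hypothesis $0 < z_i/m_i < 1$ forces $1 \le z_i \le m_i - 1$ and $m_i \ge 2$, and then the elementary chain $(1 - 1/m_i)^2 \le 1 - 1/m_i \le 1 - 1/m_*$ renders both leading coefficients $(m_i - z_i)^2 - m_i^2(1 - 1/m_*)$ and $z_i^2 - m_i^2(1 - 1/m_*)$ nonpositive, with the sub-leading constant $-2 z_i(m_i - z_i) < 0$ handling any degenerate equality case. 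Continuity of $\mu_i$ on $\mathbb R$ then produces a finite $C_i$, and summing over $i$ gives
\begin{equation*}
(\P_\epsilon V)(x) = (\P V)(x) \le (1 - 1/m_*) V(x) + K, \qquad K = 2p + \sum_i C_i/m_i,
\end{equation*}
completing the proof.
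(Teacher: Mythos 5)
Your proof is correct and follows essentially the same route as the paper's: the same quadratic identity $\mathbf E[V(X')\mid\Omega]=p+\Omega'WM^{-1}W'\Omega$, the same projection bound $WM^{-1}W'\preceq D^{-1}$, the same Mills-ratio tail asymptotics reducing everything to $\max(z_i,m_i-z_i)^2\le(m_i-1)^2<m_i^2(1-1/m_i)\le m_i^2(1-1/\max_j m_j)$, and the same moment-matching observation to transfer the drift to $\P_\epsilon$. Your bookkeeping --- splitting mean from covariance, using $\tr(HD)=p$, and completing the square to get $\sum_i\mu_i(x)^2/m_i$ --- is a slightly tidier packaging of the paper's pointwise bound $\Omega'H\Omega\le\sum_i\omega_i^2/m_i$ followed by computing $\mathbf E[\omega_i^2]$, but the substance is identical.
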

\noindent The proof of Proposition~\ref{prop:LyapunovProbit} is given in the appendix.
Note that the results of \cite{roy2007convergence} can be used directly to verify
that $V(x) = x' W'DW x$ is a Lyapunov function for $\P$ and that $\P$ satisfies a 
minorization condition on sublevel sets of $V$, since the model in \eqref{eq:Probit}
is equivalent to a binary probit model with some rows of $W$ being identical, so the
main interest of this result is to give a simpler proof under slightly stronger conditions,
to give an explicit bound on $\gamma$, and to show that $V$ is also a Lyapunov function
of $\P_\epsilon$.

Now we give a bound on $d_0(\delta_x \P, \delta_x \P_\epsilon)$, which is sufficient
to construct an algorithm satisfying Assumption \ref{ass:ClosenessTV} using the
approach described in Section \ref{sec:Achieving}.

\begin{proposition}
 Suppose $\P$ is the exact data augmentation Gibbs sampler for probit regression, and $\P_\epsilon$ uses Gaussian approximations to $\omega_i$. Then 
 \be
 d_0(\delta_x \P, \delta_x \P_\epsilon) = \frac{\sqrt 2}4 \sqrt{  \sum_{i=1}^N \var(\omega_i \mid x) \psi_{ii} },
 \ee
 with $\psi_ii$ the $i$th diagonal entry of the matrix $W M^{-1} W'$.
\end{proposition}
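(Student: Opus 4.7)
The approach I would take hinges on the fact that both $\P(x,\cdot)$ and $\P_\epsilon(x,\cdot)$ share the same final Gaussian step $y \mid \Omega \sim \Normal(M^{-1}W'\Omega, M^{-1})$ and differ only in the law of $\Omega$: under $\P$, $\Omega$ is a vector of independent sums of truncated normals, while under $\P_\epsilon$, $\widetilde\Omega$ is a product Gaussian whose coordinate means and variances match those of $\Omega$ by construction. Writing $A = M^{-1}W'$, one therefore has $y = A\Omega + M^{-1/2}Z$ under $\P$ and $y = A\widetilde\Omega + M^{-1/2}Z$ under $\P_\epsilon$ for a standard Gaussian $Z$ that can be coupled identically in the two chains.

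First I would couple the noise $Z$ between $\P$ and $\P_\epsilon$ so that, by the coupling characterization of total variation, the one-step TV equals the TV between the laws of $A\Omega + M^{-1/2}Z$ and $A\widetilde\Omega + M^{-1/2}Z$, i.e.\ the two non-Gaussian/Gaussian laws smoothed by $\Normal(0, M^{-1})$. Next I would pass to the "fitted value" coordinates $u = Wy$, for which $u = H\Omega + WM^{-1/2}Z$ under $\P$ and $u = H\widetilde\Omega + WM^{-1/2}Z$ under $\P_\epsilon$, with $H = WM^{-1}W'$; here the Gaussian noise $WM^{-1/2}Z$ has covariance $H$ with diagonal entries $\psi_{ii}$. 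Because the $\omega_i$'s are conditionally independent given $x$ and the Gaussian smoothing acts coordinate-wise in $u$, the comparison reduces to a sum of one-dimensional discrepancies between $\omega_i$ and its matched Gaussian $\widetilde\omega_i$, each smoothed by a Gaussian of variance $\psi_{ii}$.

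At that point I would apply Pinsker's inequality (in the $[0,2]$-normalized TV used throughout the paper) together with a direct second-order expansion of the Gaussian KL. Since $\widetilde\omega_i$ has the same mean and variance as $\omega_i$ but is Gaussian, the leading contribution to the smoothed KL is $\tfrac12 \var(\omega_i\mid x)\,\psi_{ii}^{-1}$ times $\psi_{ii}^2$ type factors that collapse to $\var(\omega_i\mid x)\,\psi_{ii}$ after a brief matrix computation. Summing coordinate-wise and taking square roots produces the factor $\sqrt{\sum_i \var(\omega_i\mid x)\psi_{ii}}$, with the multiplicative $\tfrac{\sqrt 2}{4}$ arising from combining the Pinsker constant $\tfrac{1}{\sqrt 2}$ with the factor of $\tfrac12$ that appears in the Gaussian KL $\tfrac12(\sigma_1^2/\sigma_2^2 - 1 - \log(\sigma_1^2/\sigma_2^2))$ at second order.

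The main obstacle is justifying the step from inequality to equality. Strictly speaking, the Pinsker/Gaussian-KL route yields an upper bound, with equality holding only to leading order as the per-coordinate discrepancy shrinks; in the stated regime this is the dominant term and matches the claimed formula. I would therefore be careful either to interpret the statement as the leading-order bound (the version actually used to invoke the machinery of Section \ref{sec:Achieving}) or to carry an $o(\cdot)$ remainder through the calculation, verifying that higher-moment contributions of the sum-of-truncated-normals law of $\omega_i$ are absorbed. Once this is controlled, the $\tfrac{\sqrt 2}{4}\sqrt{\sum_i \var(\omega_i\mid x)\psi_{ii}}$ expression follows by assembling the coordinate-wise bounds, and the dependence on $x$ enters entirely through $\var(\omega_i\mid x)$, which is a bounded function of $\langle w_i, x\rangle$ for the truncated normal construction.
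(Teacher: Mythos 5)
Your high-level setup (the shared Gaussian conditional step, reduction to the latent vector $\Omega$, Pinsker's inequality) matches the paper's, but the comparison you actually propose is different and the key computation would fail. You compare the two \emph{smoothed marginals}, i.e.\ the laws of $A\Omega + M^{-1/2}Z$ and $A\widetilde\Omega + M^{-1/2}Z$ where $\widetilde\Omega$ is the moment-matched Gaussian. Because the approximation matches both the mean and the variance of each $\omega_i$, these two smoothed laws agree to second order: the Gaussian-KL term $\tfrac12(\sigma_1^2/\sigma_2^2 - 1 - \log(\sigma_1^2/\sigma_2^2))$ you invoke vanishes identically when $\sigma_1=\sigma_2$, and the leading contribution to the divergence is governed by the mismatch in third and fourth cumulants of $\omega_i$, not by $\var(\omega_i\mid x)$. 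So the claimed collapse to $\var(\omega_i\mid x)\,\psi_{ii}$ cannot come out of this route, and the factor $\tfrac{\sqrt2}{4}$ has no source in it. (A secondary issue: $H = WM^{-1}W'$ is not diagonal in general, so the smoothing in your fitted-value coordinates does not act coordinate-wise.)

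The paper's argument is cruder and avoids this. It couples the latent vectors \emph{independently}: draw $\Omega$ from the exact latent law and $\Omega_\epsilon$ from the Gaussian one, and for each fixed pair compare the two conditionals of $x$, which are Gaussians with common covariance $M^{-1}$ and a pure mean shift, so the KL is exactly $\tfrac12(\Omega-\Omega_\epsilon)'\Psi(\Omega-\Omega_\epsilon)$ with $\Psi = WM^{-1}W'$. Convexity of total variation over mixtures, Pinsker, and Jensen then reduce everything to $\E(\omega_i-\omega_i^\epsilon)(\omega_j-\omega_j^\epsilon)$; independence and matched means kill the cross terms and give $\E(\omega_i-\omega_i^\epsilon)^2 = 2\var(\omega_i\mid x)$, which is precisely where the $\sqrt2$ in the constant comes from. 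Your observation that Pinsker yields only an inequality (so the stated ``$=$'' should really be ``$\le$'') is correct and applies to the paper's proof as well, but to recover the stated bound you need the mean-shift, independent-coupling comparison, not a moment-matched KL expansion.
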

\begin{proof}
Denote by $\KL(\mu \| \nu)$
the Kullback-Leibler divergence between probability measures $\mu,\nu$ that are
absolutely continuous with respect to a dominating measure $\lambda$, which in
this example we can take to be Lebesgue measure. Denote by
$\mu(x \mid \Omega)$ and $\mu_\epsilon(x \mid \Omega)$ the conditional
measure of $x$ given $\Omega$ induced by the kernels $\P$ and $\P_\epsilon$,
respectively. Observe
\be
\KL(\mu \| \mu_\epsilon) &= \frac12 (M^{-1} W' (\Omega-\Omega_\epsilon))' M  (M^{-1} W' (\Omega-\Omega_\epsilon)) \\
&= \frac12  (\Omega-\Omega_\epsilon)' W M^{-1} W' (\Omega-\Omega_\epsilon).
\ee
Putting $\Psi=W M^{-1} W'$, we have by Pinsker's inequality
\be
d_0(\mu, \mu_\epsilon) &\le \sqrt{\frac14 (\Omega-\Omega_\epsilon)' \Psi 
(\Omega-\Omega_\epsilon) } 
\le \frac12 \sqrt{\sum_{i=1}^N \sum_{j=1}^N 
(\omega_i-\omega^{\epsilon}_i)(\omega_j - \omega^{\epsilon}_j) \psi_{ij}}  \\
\mathbf E d_0(\mu,\mu_\epsilon) &\le \frac14 \sqrt{   
\sum_{i=1}^N \sum_{j=1}^N \mathbf E (\omega_i-\omega^{\epsilon}_i)(\omega_j - 
\omega^{\epsilon}_j) \psi_{ij} } \\
&= \frac14 \sqrt{  \sum_{i=1}^N \mathbf E (\omega_i-\omega^{\epsilon}_i)^2 
\psi_{ii} } = \frac{\sqrt 2}4 \sqrt{  \sum_{i=1}^N \var(\omega_i) \psi_{ii} }.
\ee
\end{proof}
This quantity will be roughly $m^{-1/2}$ when all $m_i = m$, so the error converges
to zero in the total variation norm at the expected rate.

\subsection{Application to Minibatching Metropolis-Hastings} 
\label{sec:MHMinibatch}
We first make some general observations about $\P_\epsilon$ that arises
from approximating Metropolis-Hastings acceptance ratios.
Consider a generic Metropolis-Hastings algorithm with target measure
$\mu$, proposal kernel $Q(x,\cdot) = Q_x(\cdot)$ with Radon-Nikodym derivative
with respect to $\mu$
\be
\frac{dQ_x}{d\mu}(y) =q(x,y) 
\ee
and Markov transition operator $\P$. Suppose $V$ is a Lyapunov function of $\mathcal{P}$. 
Let
\be \label{eq:AccRatioKappa}
\kappa(x,y) &= \frac{q(y,x)}{q(x,y)}, \quad \alpha(x,y) &= 1 \wedge \kappa(x,y). 
\ee
Then we can write $\P V$ as
\be \label{eq:LyapunovMH}
\P V(x) &= \int V(y) \alpha(x,y) Q(x,dy) + V(x) \left(1- \int \alpha(x,y) Q(x,dy) \right). 
\ee

Let $\P_{\epsilon}$ be the transition kernel of another Metropolis algorithm with the same proposal distribution, but which replaces $\alpha(x,y)$ with $\alpha_{\epsilon}(x,y)$. Let $d$ be a lower semicontinuous metric on $\X$. Then
\be
d(\delta_x \P, \delta_x \P_\epsilon) &= \sup_{\varphi \in \Lip_1(d)} \int_{\X} (\varphi(y)-\varphi(x)) \{\alpha(x,y)-\alpha_\epsilon(x,y)\} Q(x,dy).
\ee
In particular if $d=d_\beta(x,y) = \{2+ \beta(V(x)+V(y))\} \mathbf 1(x \ne y)$ then via \eqref{eq:rho} and the equivalence of $\rho_\beta$ and $d_\beta$ 
\be
d_\beta(\delta_x \P,\delta_x \P_\epsilon) &= \int_{\X} (1+\beta V(y)) |\alpha(x,y)-\alpha_\epsilon(x,y)| Q(x,dy)
\ee
and if $d = d_0(x,y) = 2 \mathbf 1(x \ne y)$ then
\be
d_0(\delta_x \P,\delta_x \P_\epsilon) &=  2 \int |\alpha(x,y)-\alpha_\epsilon(x,y)| Q(x,dy)
\ee
Define $\Delta_\epsilon(x,y) = (\alpha-\alpha_\epsilon)(x,y)$. It follows that to achieve $d(\delta_x \P, \delta_y \P_\epsilon)$ where $d$ is a weighted or ordinary total variation metric, we must have $|\Delta_\epsilon(x,y)|$ small whenever $|x-y|$ is ``small''. Here ``small'' depends on the proposal kernel $Q$ and, in the weighted case, the Lyapunov function. Clearly, the lighter the tails of $q(x,y) V(y) m(y)$, the smaller the neighborhood of $x$ over which we require tight control over $|\Delta_\epsilon(x,y)|$. To know that an approximation is \emph{not} accurate, then, it is sufficient to check that $|\Delta_\epsilon(x,y)|$ is typically \emph{not small} by simulating the Markov chain numerically and computing its value pathwise. Of course this requires that it is possible to simulate from $\P$ in reasonable computing time, which is not always the case. Here, our purpose is to investigate the accuracy of minibatching, so we will focus on an example where it is possible to do this.

We consider a Metropolis-Hastings scheme for logisitic regression, a type of generalized linear model. Thus the unnormalized target for the exact and minibatch-based algorithms take the form in \eqref{eq:GLMlik}, with the specific values of $g^{-1}$ and $A$ given by
\be 
\log \{m(x)\} &\propto \sum_{i=1}^N z_i \langle w_i,x \rangle - \log(1+e^{\langle w_i, x \rangle}) - \frac12 \|B^{-1/2} (x-b)\|^2 \label{eq:LogRegTargets} \\
\log \{m_{\mathcal A}(x)\} &\propto \frac{N}{|\mathcal A|} \sum_{i \in \mathcal A} z_i \langle w_i,x \rangle - \log(1+e^{\langle w_i, x \rangle}) - \frac12 \|B^{-1/2} (x-b)\|^2 \label{eq:LogRegTargetsMinibatch}, 
\ee
where again $\mathcal A$ is a random subset of the integers between 1 and $N$ satisfying $|\mc A| = N_0(\epsilon)$. The value of $N_0(\epsilon)$ is chosen to achieve the desired approximation error. The complete minibatching algorithm that we consider randomly chooses a new $\mc A$ at each iteration. However, we initially consider the properties of the transition kernel $\P_{\mc A}$ that evaluates \eqref{eq:LogRegTargetsMinibatch} at each iteration with a time-invariant subset $\mc A$, which transfer to the randomized algorithm. 

For the remainder of this section, all probability measures of interest have densities with respect to Lebesgue measure. For simplicity, we write the density of $\P(x,\cdot)$ as $p(x,y)$, and the density of the proposal $Q(x,\cdot)$ as $q^*(x,y)$, all with respect to Lebesgue measure. We hope this does not cause any confusion with the Radon-Nikodym derivative of $Q$ with respect to the target $\mu$ denoted by $q(x,y)$.

Throughout, we consider random walk Metropolis with an isotropic Gaussian proposal 
\be \label{eq:GaussianRWMH}
q^*(x,y) = |2 \pi \tau^2 I|^{-1/2} e^{-\|\tau^{-1} (y-x)\|^2/2}
\ee
for $\tau > 0$. We now verify Assumptions \ref{ass:small} and \ref{ass:lyapunov} for $\P$. \citet{jarner2000geometric} show sufficient conditions for random-walk Metropolis to have a Lyapunov function (similar results in one dimension may be found in \citet{mengersen1996rates}). The following corollary combines several results from \cite{jarner2000geometric}.
\begin{corollary} \label{cor:MetropolisLyapunov}
 Suppose $\P$ is defined by a Metropolis algorithm on $\R^p$ with proposal kernel $Q(x,\cdot)$ having density $q^*(x,y)$ with respect to Lebesgue measure satisfying
 \begin{enumerate}
  \item $q^*$ is of ``random walk type'': $q^*(x,y) = q^*(\|x-y\|)$
  \item There exists $\epsilon_q,\delta_q>0$ such that $q^*(x,y) > \epsilon_q$ for $\|x-y\| < \delta_q$ 
  \item For every $x$ we have
  \be \label{eq:ProposalBoundedMoment}
  \int \| x-y \| q^*(x,y) dy < \infty. 
  \ee
 \end{enumerate}
 Suppose further that the target $\mu$ has density $m(x)$ with respect to Lebesgue measure satisfying
 \be
 \limsup_{\|x\| \to \infty} \left\langle \frac{x}{\|x\|}, \nabla \log\{m(x)\} \right\rangle = -\infty \\
 \limsup_{\|x\| \to \infty} \left\langle \frac{x}{\|x\|}, \frac{\nabla m(x)}{\|\nabla m(x)\|} \right \rangle < 0.
 \ee
Then there exist $s,c>0$, $\gamma \in (0,1)$, and $K < \infty$ such that $V(x) = c e^{s\|x\|}$ satisfies $\P V \le \gamma V + K$.
\end{corollary}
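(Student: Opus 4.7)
The plan is to verify the Foster–Lyapunov drift for $V(x) = c e^{s\|x\|}$ by writing out the Metropolis drift formula and exploiting the two asymptotic conditions on $m$. Since the proposal is of random-walk type and symmetric, $q^*(x,y) = q^*(y,x)$, so the acceptance ratio simplifies to $\alpha(x,y) = 1 \wedge m(y)/m(x)$ and \eqref{eq:LyapunovMH} rearranges as
\begin{equation*}
\P V(x) - V(x) = \int (V(y) - V(x)) \alpha(x,y) q^*(x,y) dy.
\end{equation*}
It therefore suffices to show that for $s$ small enough, the right-hand side divided by $V(x)$ is bounded above by $-(1-\gamma)$ for all $\|x\|$ large enough (outside a sublevel set), since on any bounded set the integral is uniformly bounded in $x$ thanks to \eqref{eq:ProposalBoundedMoment} and the local regularity of $q^*$. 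I would split the domain of integration into an inward cone $C^-(x)$ (roughly $\{y: \langle (y-x)/\|y-x\|, -x/\|x\|\rangle \ge \eta\}$), an outward cone $C^+(x)$, and the complementary ``sideways'' region.

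On the inward cone, the second condition $\limsup \langle x/\|x\|, \nabla m/\|\nabla m\| \rangle < 0$ guarantees that for $\|x\|$ large, $m$ is increasing along proposals into $C^-(x)$ of unit length, so $\alpha(x,y) = 1$ there. On this set, $\|y\| \le \|x\| - \eta \|y-x\|$ by the cone geometry, which gives $V(y)/V(x) \le e^{-s\eta\|y-x\|}$. Integrating against a fixed mass of proposals (bounded below by $\epsilon_q$ on a ball of radius $\delta_q$ by hypothesis 2 on $q^*$), the contribution from $C^-(x)$ is of order $-\kappa V(x)$ for some $\kappa > 0$ depending on $s,\eta,\epsilon_q,\delta_q$ but not on $x$. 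This is the \emph{source of the drift}.

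On the outward cone and the sideways region, I would use the super-exponential decay condition $\limsup \langle x/\|x\|, \nabla \log m \rangle = -\infty$. This implies that for any $s>0$ there exists $R_s$ such that for $\|x\| > R_s$ and $y$ with $\langle y - x, x/\|x\|\rangle > 0$, we have $m(y)/m(x) \le e^{-2s\|y-x\|}$. Consequently, on the outward cone $\alpha(x,y) (V(y)/V(x)) \le e^{s\|y-x\|} e^{-2s\|y-x\|} = e^{-s\|y-x\|}$, which is integrable against $q^*$ uniformly in $x$, and the resulting contribution is $o(1)$ as $\|x\| \to \infty$. The sideways region can be absorbed into either the inward or outward bound at the cost of shrinking $\eta$. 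Combining, for $\|x\|$ beyond a threshold $R$,
\begin{equation*}
\P V(x) / V(x) \le 1 - \kappa + o(1),
\end{equation*}
which yields $\P V \le \gamma V$ with $\gamma = 1 - \kappa/2 \in (0,1)$ for $\|x\|>R$, while on $\{\|x\|\le R\}$ the difference $\P V - \gamma V$ is uniformly bounded by some constant $K$.

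The main obstacle is not conceptual but quantitative: one must choose $s$ small enough that (i) the outward-cone contribution $\int e^{s\|y-x\|} q^*(x,y) dy$ is finite and small, and (ii) the inward drift $\kappa$ survives. Luckily condition 3 on $q^*$ already gives a first absolute moment of the proposal, so one can choose $s$ so that $\int e^{s\|y-x\|} q^*(x,y) dy$ is finite (or one may restrict to proposals with $\|y-x\| \le R_0$, discarding a term that decays like $e^{-c R_0}$). These are exactly the truncation-and-dominated-convergence arguments carried out in \cite{jarner2000geometric}, which is why I would cite them at this step rather than reproduce the detailed estimates.
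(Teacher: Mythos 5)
Your proposal is correct in outline but takes a genuinely different route from the paper. The paper's proof is a three-step chain of citations: the two conditions on $m$ together with the hypotheses on $q^*$ let one invoke \citet[Theorem 4.3]{jarner2000geometric} to conclude geometric ergodicity, then \citet[Theorem 3.1]{jarner2000geometric} to convert that into the existence of \emph{some} drift function, and finally \citet[Theorem 3.3]{jarner2000geometric} to upgrade the drift function to the exponential form $c e^{s\|x\|}$. You instead verify the drift inequality for $V(x)=ce^{s\|x\|}$ directly, via the inward/outward/sideways cone decomposition of the Metropolis drift integral --- which is essentially a reconstruction of what Jarner and Hansen do \emph{inside} their Theorem 4.3 rather than a use of its statement. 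Your route is more self-contained and would yield explicit constants $s,\gamma,K$ in terms of the cone aperture and the proposal's lower bound $\epsilon_q,\delta_q$; the paper's route is shorter but opaque about constants. One point in your sketch is stated too strongly: the claim that $\limsup\langle x/\|x\|,\nabla\log m\rangle=-\infty$ alone gives $m(y)/m(x)\le e^{-2s\|y-x\|}$ for \emph{every} $y$ with a positive radial increment fails for nearly tangential proposals, where the displacement is large but its radial component is small and the tangential component of $\nabla\log m$ is uncontrolled by that condition; you need either to restrict to a proper outward cone (with the equatorial band handled separately, as you do elsewhere) or to invoke the second condition on the normalized gradient, which bounds the tangential part of $\nabla\log m$ by a constant multiple of its radial part. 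Since you explicitly defer the quantitative truncation and dominated-convergence steps to \cite{jarner2000geometric}, where exactly this issue is resolved, this is an imprecision rather than a gap.
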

\begin{proof}
With the stated conditions on $q$ and $m$ we can immediately apply \citet[Theorem 4.3]{jarner2000geometric}, showing that $\P$ is geometrically ergodic in the sense of equation (11) of \citet{jarner2000geometric}. Now, \citet[Theorem 3.1]{jarner2000geometric} implies that there exists a function $V : \X \to [1,\infty)$, $\gamma \in (0,1)$ and $K < \infty$ such that $(\P V)(x) \le \gamma V(x) + K \1_{\mc S}(x) \le \gamma V(x) + K$ for a set $\mc S \subset \X$. The condition in \eqref{eq:ProposalBoundedMoment} combined with \citet[Theorem 3.3]{jarner2000geometric} implies we can take $V(x) = c e^{s\|x\|}$. 
\end{proof}

We now show Theorem \ref{thm:HM2011} for the Gaussian random walk
Metropolis algorithm under consideration, as well as a Lyapunov
function for the minibatching algorithm that targets
\eqref{eq:LogRegTargetsMinibatch}.

We start by establishing the needed minorization  condition  
 \be \label{eq:MinorizationCompactSets}
 \sup_{x,y \in \mc S} d_0(\delta_x \P, \delta_y \P) \le 2 \bar \alpha_0
 \ee
 for some $\bar \alpha_0 >0$ depending on $\mc S$. 

For this estimate to hold, it is enough for the  transition destiny $p(x,y)$ uniformly bounded from below
 over $x,y \in \mc S$.  Since the invariant measure's density $m(x)$
 and the proposals kernel  $q^*(x,y)$ are continuous in their parameters
 and everywhere positive, we know that $m(x)$ and $q^*(x,y)$ are bounded from above
 and below by positive constants uniformly over any compact
 sets. This implies the desired lower bound on $p(x,y)$  since
\be
 p(x,y) &= \alpha(x,y) q^*(x,y) + \delta_x(y) \int (1-\alpha(x,y)) q^*(x,y) dy \\
 &\ge \alpha(x,y) q^*(x,y) = \left( 1 \wedge \frac{m(y)}{m(x)} \right) q^*(x,y) \\
 \inf_{x,y \in \mc S} p(x,y) &\ge \frac{\inf_{y \in \mc S} m(y)}{\sup_{x \in \mc S} m(x)} \inf_{x,y \in \mc S} q^*(x,y) > 0.
 \ee
 
Combining this with  Assumptions \ref{ass:lyapunov} and
\ref{ass:small} produces the desired result, namely:
\begin{corollary}
 Suppose $\P$ is a Metropolis kernel with proposal density in \eqref{eq:GaussianRWMH} and target satisfying \eqref{eq:LogRegTargets}. Then Theorem \ref{thm:HM2011} holds for $\P$ with Lyapunov function $V(x) = c e^{s \|x\|}$ for $c,s>0$. Further, suppose $\P_{\mc A}$ is a Metropolis kernel with proposal density \eqref{eq:GaussianRWMH} targeting \eqref{eq:LogRegTargetsMinibatch} using a fixed subset $\mathcal A \subset \{1,\ldots,N\}$ of data at every iteration. Then there exist $c_{\mc A}, s_{\mc A}$ such that $V_{\mc A}(x) = c_{\mc A} e^{s_{\mc A} \|x \|}$ is a Lyapunov function of $\P_{\mc A}$.
\end{corollary}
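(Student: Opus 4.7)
The plan is to verify Assumptions \ref{ass:lyapunov} and \ref{ass:small} for $\P$, after which Theorem \ref{thm:HM2011} applies, and then to verify Assumption \ref{ass:lyapunov} alone for $\P_{\mc A}$. Assumption \ref{ass:small} follows directly from the short minorization argument just above the statement: positivity and continuity of $m(x)$ and $q^*(x,y)$ force $p(x,y)$ to be bounded below by a positive constant on $\mc S(R) \times \mc S(R)$ for any sublevel set $\mc S(R) = \{x : V(x) \le R\}$, and this translates into a uniform bound on $d_0(\delta_x \P, \delta_y \P)$ strictly less than $2$.

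The substantive work is to check the three hypotheses of Corollary \ref{cor:MetropolisLyapunov}. The Gaussian proposal \eqref{eq:GaussianRWMH} is of random walk type by construction, is bounded below by a positive constant on any ball $\{y : \|y-x\| < \delta_q\}$, and has all moments finite, so the hypotheses on $q^*$ hold. For the target \eqref{eq:LogRegTargets}, compute
\be
\nabla \log m(x) = \sum_{i=1}^N \bigl(z_i - \sigma(\langle w_i, x\rangle)\bigr) w_i - B^{-1}(x-b),
\ee
where $\sigma$ is the logistic sigmoid. Since $\sigma \in (0,1)$, the first sum is uniformly bounded in $x$, so the quadratic prior term dominates at infinity. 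Letting $\lambda_{\min}, \lambda_{\max}$ be the extremal eigenvalues of $B^{-1}$, this yields
\be
\left\langle \tfrac{x}{\|x\|}, \nabla \log m(x) \right\rangle \le -\lambda_{\min} \|x\| + O(1) \to -\infty,
\ee
giving the first condition on $m$. For the second, the identity $\nabla m(x)/\|\nabla m(x)\| = \nabla \log m(x)/\|\nabla \log m(x)\|$ (using $m(x) > 0$) and the fact that the unit gradient approaches $-B^{-1}x/\|B^{-1}x\|$ at infinity combine with the bound $\langle x, B^{-1} x\rangle/(\|x\|\|B^{-1}x\|) \ge \lambda_{\min}/\lambda_{\max}$ to give a $\limsup$ of at most $-\lambda_{\min}/\lambda_{\max} < 0$. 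Corollary \ref{cor:MetropolisLyapunov} then supplies $V(x) = c e^{s\|x\|}$ with the claimed properties.

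For $\P_{\mc A}$, the target \eqref{eq:LogRegTargetsMinibatch} has identical structure: the minibatched log-likelihood gradient $\frac{N}{|\mc A|} \sum_{i \in \mc A} (z_i - \sigma(\langle w_i, x\rangle)) w_i$ is once again uniformly bounded in $x$ because $\sigma$ is bounded, while the Gaussian prior contribution is unchanged. The two gradient conditions on $m_{\mc A}$ therefore hold by the identical argument, and Corollary \ref{cor:MetropolisLyapunov} produces constants $c_{\mc A}, s_{\mc A} > 0$ such that $V_{\mc A}(x) = c_{\mc A} e^{s_{\mc A}\|x\|}$ is a Lyapunov function for $\P_{\mc A}$. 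No serious obstacle is anticipated; the only subtlety is that the constants $c_{\mc A}, s_{\mc A}$ depend on $\mc A$ through the magnitude of the bounded minibatched likelihood gradient, but for any fixed subset this dependence is harmless and the corollary absorbs it.
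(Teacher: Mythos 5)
Your proposal is correct and follows essentially the same route as the paper: verify the hypotheses of Corollary \ref{cor:MetropolisLyapunov} by computing $\nabla \log m(x) = W'(z - \tilde z(x)) - B^{-1}(x-b)$, noting the sigmoid term is uniformly bounded so the Gaussian prior's quadratic term dominates, and bounding the asymptotic radial direction of the gradient by the eigenvalue ratio of $B^{-1}$; the minibatch case is handled identically since the rescaled likelihood gradient remains uniformly bounded. The paper's proof is the same argument written out with the matrix notation $W'\tilde z(x)$, and likewise defers the minorization to the displayed argument preceding the corollary.
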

\begin{proof}
 We first show the stated results on the Lypanuov functions. The three conditions on $q$ in Corollary \ref{cor:MetropolisLyapunov} are immediate for the isotropic Gaussian proposal in \eqref{eq:GaussianRWMH}. We now calculate $\nabla \{\log m(x)\}$ and $\nabla m(x)$. Putting $\tilde B = B^{-1}$, we have 
\be
\frac{\partial}{\partial x_j} \log\{m(x)\} &= \left( \sum_{i=1}^N z_i w_{ij} - \frac{w_{ij} e^{w_i x}}{1+e^{w_i x}}\right) - x_j \sum_k \tilde B_{jk} + b_j \sum_k \tilde B_{jk} \\
\frac{\partial}{\partial x_j} m(x) &= m(x) \frac{\partial}{\partial x_j} \log\{m(x)\}
\ee
so with $\tilde z_i(x) = e^{w_i x} \{1+e^{w_i x}\}^{-1} \in [0,1]$, $\{\tilde z(x)\}_i = \tilde z_i(x)$, and $W$ the $N \times p$ matrix with $i$th row $w_i$, we have
\be
\nabla \log\{m(x)\} &= W'z - W'\tilde z(x) - B^{-1} x + B^{-1} b \\
\nabla m(x) &= m(x) (W'z - W'\tilde z(x) - B^{-1} x + B^{-1} b) 
\ee
so that
\be
x' (\nabla \log\{m(x)\}) = x'(W'(z-\tilde z(x))) - x' B^{-1} x + x' B^{-1} b
\ee
and since $z-\tilde z(x) \in [-1,1]$ we have
\be
\limsup_{\|x\| \to \infty} \|x\|^{-1} x'(\nabla \log\{m(x)\}) = -\infty.
\ee
Moreover
\be
x' (\nabla m(x)) &= m(x) x' (\nabla \log\{m(x)\}) \\
\frac{x'(\nabla m(x))}{\|x\| \|\nabla m(x)\|} &= \frac{m(x) x' (\nabla \log\{m(x)\})}{\|x \| m(x) \|\nabla \log \{m(x)\} \|} = \frac{x' (\nabla \log\{m(x)\})}{\|x \| \|\nabla \log \{m(x)\} \|} \\
&= \frac{x'(W'(z-\tilde z(x))) - x' B^{-1} x + x' B^{-1} b}{\|x\| \|x'(W'(z-\tilde z(x))) - x' B^{-1} x + x' B^{-1} b\|}
\ee
so
\be
\limsup_{\|x \| \to \infty} \frac{x'(\nabla m(x))}{\|x\| \|\nabla m(x)\|} &= \limsup_{\|x\| \to \infty} \frac{x'(W'(z-\tilde z(x))) - x' B^{-1} x + x' B^{-1} b}{\|x\| \|W'(z-\tilde z(x)) - B^{-1} x + B^{-1} b\|} \\
&= \limsup_{\|x\| \to \infty} \frac{-x'B^{-1} x}{\|x\| \|B^{-1} x\|} \le \limsup_{\| x \| \to \infty} \frac{-\lambda_{\max}(B)^{-1} \|x\|^2}{\|x\|^2 \lambda_{\min}(B)^{-1}} \\
&= \frac{-\lambda_{\max}(B)^{-1}}{\lambda_{\min}(B)^{-1}} < 0,
\ee
where $\lambda_{\min}(B),\lambda_{\max}(B)$ are the smallest and largest eigenvalues of $B$, both positive since $B$ is positive-definite. So the conditions of Corollary \ref{cor:MetropolisLyapunov} are satisfied for $\P$, and we have that $V(x) = c e^{s\|x\|}$ is a Lyapunov function. The proof of the result for $\P_{\mc A}$ is virtually identical.
\end{proof}

Now we randomize the index subset at each iteration. Let $\mathscr P$ be the set of all minibatch random-walk Metropolis kernels targeting $m_{\mathcal A}(x)$ given by \eqref{eq:LogRegTargets} for some $\mc A$ satisfying $|\mc A| = N_0(\epsilon)$. For any $\P_{\mathcal A} \in \mathscr P$, there exists a Lyapunov function $V_{\mathcal A}(x) = c_{\mathcal A} e^{s_{\mathcal A} \|x\|}$. The set $\mathscr P$ is finite; define
\be
V(x) = \min_{\mc A \subseteq \{1,\ldots,N\} : |\mc A| \ge N_0(\epsilon)} V_{\mathcal A}(x) \ge c e^{s \|x\|}, \quad s = \min_{\mathcal A} s_{\mathcal A}, \quad c = \min_{\mathcal A} c_{\mathcal A},
\ee
so that $c e^{s \|x\|}$ is a Lyapunov function of every $\P_\epsilon$ arising from minibatching with subsets of size at least $N_0$, as well as $\P$, a consequence of the fact that fractional powers of Lyapunov functions are Lyapunov functions by Jensen's inequality. It follows that the Markov kernel defined by
\be
\P_\epsilon(x,\cdot) = \sum_{\mathcal A : |\mc A| = N_0(\epsilon)} \P_{\mathcal A}(x,\cdot) \pi_{\mathcal A}, 
\ee
where $\sum_{\mc A : |\mc A| = N_0(\epsilon)} \pi_{\mathcal A} = 1, 0 \le \pi_{\mathcal A} \le 1$, has Lyapunov function $V(x) = c e^{s \|x\|}$. This is enough to apply Theorem \ref{thm:TVperturb} \emph{if} $\P_\epsilon$ satisfies Assumption \ref{ass:closeness}. Obviously the necessary size $N_0(\epsilon)$ of the batches is a function of the desired approximation error. We now assess empirically how large $N_0$ must be to achieve different levels of approximation error in total variation.

We assess $d_\beta(\delta_x \P, \delta_x \P_\epsilon)$ and $d_0(\delta_x \P,
\delta_x \P_\epsilon)$ by simulating $\P$ as well as $\P_\epsilon$, and computing
$|\Delta_\epsilon(x,y)|$ at each step. To define $\P_\epsilon$, we fix a
subset size $|\mathcal A| = N_0$, and assign equal probability to each
subset of size $N_0$. We fix $N=100,000$ and $p=2$ and consider values of $N_0$
ranging from $1,000$ to $99,000$.We use the adaptive Metropolis algorithm of 
\cite{haario2001adaptive} with the 
scaling factor suggested in \cite{roberts2001optimal} to construct $B$.

Figure \ref{fig:LogregRatio} shows results. We plot $|\Delta|$ as a function of 
the Mahalanobis distance
$D_{\hat \Sigma}(x,\hat x) \equiv (x - \hat x) \hat{\Sigma}^{-1} (x - \hat x)$,
where $\hat x$ and $\hat \Sigma$ are estimates of the posterior mean and 
covariance based on samples of the exact algorithm after discarding a 
burn-in. We also estimate
$\mathbf P(|\Delta_\epsilon(x,y)| < \epsilon)$ 
as a function of $D_{\hat \Sigma}(x,\hat x)$ using local regression (LOESS) 
for $\epsilon = 0.1$. 
Results are shown for the case of independent normal $w_i$ with identity 
covariance. When the current state is near the ``center'' of the state space -- 
that is, close to $\hat x$ with respect to the metric $D_{\hat \Sigma}$ -- $\Delta$ has 
larger mean and the distribution is almost symmetric around 0.5. Similarly, 
the probability of achieving $|\Delta_\epsilon(x,y)| < \epsilon$ decreases as the 
state moves closer to the posterior mean. Naturally, the larger the value of 
$N_0$, the higher the probability of achieving $|\Delta| < \epsilon$, though 
it is notable that more than half the data are necessary to make this 
probability greater than 0.5 in a $D_{\hat \Sigma}$ neighborhood of the mean of 
radius greater than one. This suggests the minibatching strategy will give 
small computational advantage if the goal is to achieve a condition such as Assumption 
\ref{ass:closeness}. These results are generally consistent with those of 
\citet{bardenet2017markov}.

\begin{figure}[h]
\begin{tabular}{cc}
\includegraphics[width=0.5\textwidth]{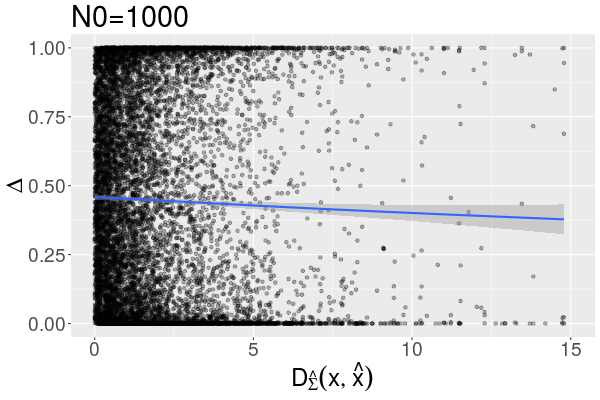} & 
\includegraphics[width=0.5\textwidth]{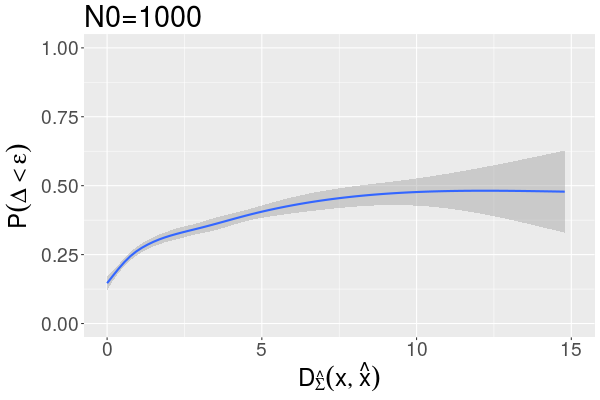} \\
\includegraphics[width=0.5\textwidth]{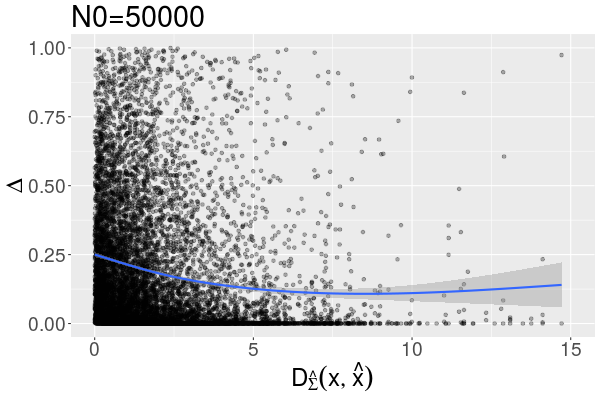} & 
\includegraphics[width=0.5\textwidth]{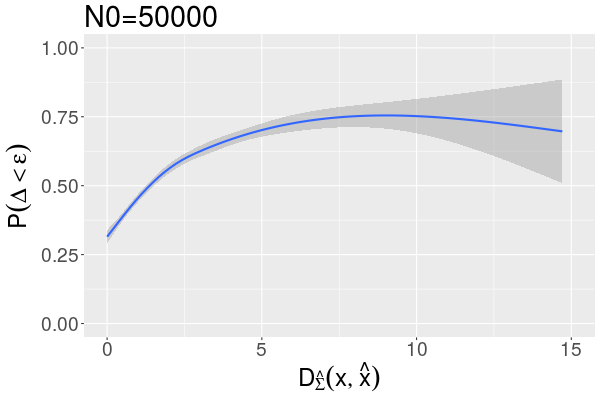} \\
\includegraphics[width=0.5\textwidth]{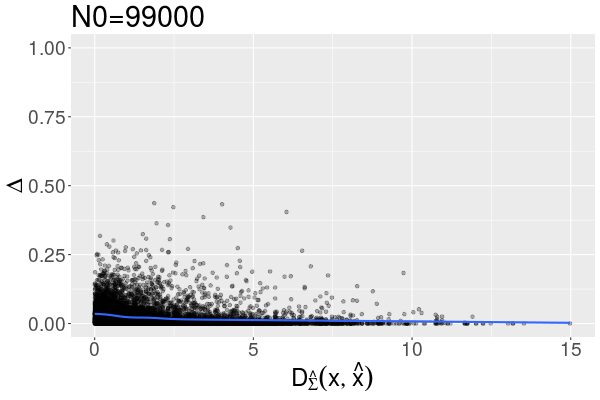} & 
\includegraphics[width=0.5\textwidth]{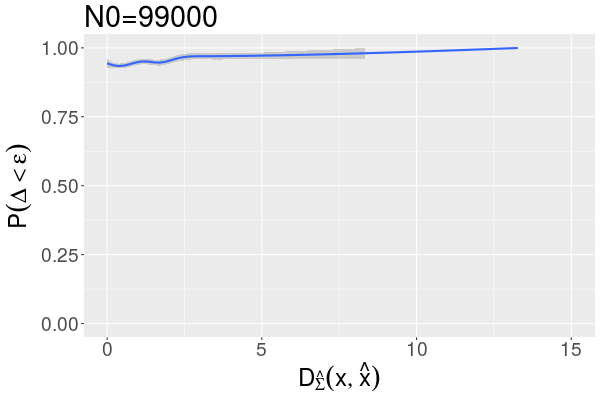} \\
\end{tabular}
\caption{Samples of $\Delta(x,y,\epsilon)$ as a function of $D_{\hat 
\Sigma}(x,\hat x)$ (left column) and estimated $\mathbf P[\Delta(x,y,\epsilon) 
< \epsilon]$ as a function of $D_{\hat 
\Sigma}(x,\hat x)$ (right column) for different values of $N_0$; the function 
estimation uses LOESS local linear smoothing.} \label{fig:LogregRatio} 
\end{figure}

\section{Application to Spatial Statistics} \label{sec:spatial}
\subsection{Gaussian process model and approximations}
We now turn to approximate MCMC for Gaussian process models outlined in
Section \ref{sec:GPSummary}. 
Consider a Gaussian process model with sampling model given by \eqref{eq:GPLikelihood}
and with squared exponential kernel having spatial covariance $\log \Sigma_{ij} = -x_1 \|w_i - w_j\|^2$.
A common prior choice defines $\pi(x) = \pi_1(x_1) \pi_2(x_2) \pi_3(x_3)$ by
\be
\pi(x_3) &= b^a\Gamma(a)^{-1} (x_3^2)^{-\frac{a}2-1} e^{-\frac{b}{2 x_3^2}}, \quad \pi(x_j) = |S_j|^{-1} \mathbf 1\{ x_j \in S_j \}, \quad j = 1,2,
\ee 
with $S_j$ a finite interval that typically does not include zero.
Integration over 
$x_3^2$ is available in closed form. We consider the case where $x_2=1$ is known and we target the posterior
for $x_1$, leading to the sampling model for $z$ given the remaining unknown $x_1=x$
after integrating over $x_3$
\be \label{eq:MLikGP}
L(z,x) \propto |I+\Sigma(x)|^{-\frac12} \Big(b+z'(I+\Sigma(x))^{-1} z \Big)^{-\frac{a+N}2}
\ee
leading to the target
\be
\log \{ m(x) \} \propto -\frac12 \log |I+\Sigma(x)| \frac{a+N}2 \log \Big(b+z'(I+\Sigma(x))^{-1} z \Big) \mathbf 1(x \in S)
\ee
where we put $S_1 = S$ since there is only one unknown.

We define $\P$ by a Metropolis algorithm with a wrapped Gaussian
random walk on the interval $S$ centered at $x$ with variance $v$. Without 
loss of generality, take $|S| = 2\pi$ with midpoint $m$ so that the density of the
proposal with respect to Lebesgue measure is given by
\be \label{eq:PropX1}
q^*(x,y) := \frac1{\sqrt{2 \pi v}} \sum_{k=-\infty}^{\infty}
e^{-\frac{(y-x+2 \pi k)^2}{2 v}} \1 \{m-\pi < y < m+\pi\} \, dy.
\ee
This density can also be expressed using Jacobi theta functions, which
we exploit below.

This algorithm is computationally expensive because it requires
that we compute the determinant of $I+ \Sigma(x)$ and
a quadratic form in its inverse at every step. We consider an 
approximating kernel $\P_\epsilon$ that saves computation by
discretizing the proposal kernel. Observe that
\be
(I+ \Sigma)^{-1} = (I + U \Lambda U')^{-1} = U(I + \Lambda)^{-1}U',
\ee 
so that if we have the spectral decomposition of $\Sigma$ available, we can
easily compute the inverse appearing in \eqref{eq:MLikGP} and its determinant.
Therefore, we discretize $\mathbf X_1$ to a $\epsilon$-grid of points, and 
only propose states on this grid, leading to a modified proposal kernel $Q_\epsilon(x,\cdot)$.
Denote these points as $\{\theta_k\}_{k \in \mathbb N}$. In practice, one 
would pre-compute the spectral decomposition at some small set of support 
points that are likely to be visited frequently by the chain, and then expand 
this set as necessary while the algorithm runs. When $N$ is very large, computing
the likelihood at even one point may be prohibitive; we consider an algorithm 
designed for this setting in the next section.

Define $\P_\epsilon$ by sampling $y^*$ from \eqref{eq:PropX1}, then proposing 
$y = \argmin_{\theta_k} |y^* - \theta_k|$,
the closest support point to $y^*$.
Since $\P_\epsilon$ and $\P$ 
are mutually singular, the weighted total variation bounds we used  
to study approximating kernels for generalized linear models are not useful for this application, and we use 
our bounds in the 1-Wasserstein metric instead.

\subsection{Wasserstein contraction for Metropolis-Hastings}
Geometric convergence in Wasserstein metrics is less well studied than in total variation, so we begin with some sufficient conditions for establishing Assumptions \ref{ass:CouplingBall} and \ref{ass:LipschitzKernel} that are easier to verify, and use these conditions to establish Theorems \ref{thm:UniformWasserstein} and \ref{thm:WassersteinTimeAvgBound} for our application. Recall that we must first establish that $\P$ is a strict contraction. 

In this section, we will assume that the target $\mu$ and the exact proposal kernel $Q(x,\ccdot)$ are absolutely continuous with respect to Lebesgue measure; of course this does not hold for the approximate proposal, which we will consider later. The following condition implies Assumption \ref{ass:CouplingBall} and is easy to show for our application since the state space is compact.
\begin{remark} \label{rem:CouplingBounded}
Let $m(x)$ be the density of $\mu$ with respect to Lebesgue measure, and let $\mathcal B_\delta(z)$ be a ball of diameter $\delta$ with center $z$. 
Suppose that for some $z^*$ and $\delta>0$ one has
\be \label{eq:KernelBounds}
 \inf_{x}  \inf_{z \in \mathcal{B}_\delta(z^*)} q(z,x) = c_0 > 0, \quad\sup_{x}  \sup_{z \in \mathcal{B}_\delta(z^*)}  q(x,z) = c_1 < \infty
\ee
with
$\frac{dQ_x}{d\mu}(z) = q(x,z)$,
and the target
density $m$ satisfies 
\be \label{eq:TargetBounds}
 \inf_{z \in \mathcal{B}_\delta(z^*)} m(z) =C_0 >0
\ee
Then Assumption \ref{ass:CouplingBall} holds for the independence coupling $\Gamma_{x,y}(du,dv) = \P(x,du) \P(y,dv)$.
\end{remark}
\begin{proof}
Clearly
$c_0/c_1 < \alpha(x,z) \le 1$ 
uniformly over $(x,z) \in \X \times \mathcal{B}_\delta(z^*)$. 
Let
 $I_\gamma \subset \mathcal{B}_\delta(z^*)$ be ball of diameter $\gamma$. Then
 \be
 \inf_x \P(x,I_\gamma) \ge |I_\gamma| \inf_{x}  \inf_{z \in
   \mathcal{B}_\delta(z^*)} q(x,z) m(z) \alpha(x,z)  
\ge \gamma C_0 \frac{c_0^2}{c_1} 
 \ee
 Consider the coupling $\Gamma_{x,y}(A_1,A_2) = \P(x,A_1) \P(y,A_2)$. We have
 \begin{align*}
 \inf_{(x,y) \in \X \times \X} \Gamma_{x,y}( (a,b) : |a-b|< \gamma ) 
 \ge&   \inf_{(x,y) \in \X \times \X}   \P(x, 
 \mathcal{B}_\frac{\gamma}2(z^*)) \P(y, \mathcal{B}_\frac{\gamma}2 
 (z^*)) \\
 \ge& \frac{\gamma^2}4 C_0^2 \frac{c_0^4}{c_1^2}   
 \end{align*}
  establishing the result.
\end{proof}

We show these conditions for our application. Define
$M = M(x) \equiv I+\Sigma(x)$.  
The eigenvalues of $M$ satisfy
\be \label{eq:EigenBounds}
\lambda_{\min}(M) &\ge 1, \quad \lambda_{\max}(M) \le 1+N,
\ee
so $\tr(M) \le 2N$ and since $M$ is positive definite
$|M|^{1/N} \le N^{-1} \tr(M)$,
meaning $|M| \le 2^N$. The target
$m(x)$ is given by \eqref{eq:MLikGP}, so
\be
 C_0 \equiv \frac{(b+\|z\|_2^2)^{-(N+a)/2} }{(1+N)^{N/2}} \le m(x) &\le  \left( b+\frac{\|z\|_2^2}{1+N} \right)^{-(N+a)/2} \equiv C_1, \\
c_0 = \frac1{C_1} \frac1{\sqrt{2 \pi v}} e^{-\frac{\pi^2}{2v}} \le q^*(x,y) &\le \frac1{C_0} \frac1{\sqrt{2 \pi v}} = c_1,
\ee
which shows \ref{rem:CouplingBounded} for the Gaussian process application since $q(x,y) = q^*(x,y)/m(y)$. An easily verifiable condition for our example that implies Assumption \ref{ass:LipschitzKernel} is the following
\begin{remark} \label{rem:ContractingMH}
Consider a Metropolis-Hastings algorithm with proposal kernel
$Q(x,\cdot)$ and acceptance probability $\alpha(x,y)$. Recalling the
definition of the metric $d$ from \eqref{eq:ddef}, suppose that
$\alpha(x,y) \in \Lip(d)$
for every $y \in \X$, and that
\be \label{eq:QWass}
d(\delta_x Q,\delta_y Q) \le C_0 d(x,y) \,.
\ee
Then Assumption \ref{ass:LipschitzKernel} holds.
\end{remark}
\begin{proof}
First observe that because $\varphi \in \Lip(d)$ implies that $\varphi$
is bounded we have that
  $\sup_{x \in \X} (Q \varphi)(x) = C_1 < \infty$.
Now  
\be
(\P \varphi)(x-y) 
&= \int \varphi(z) [\alpha(x,z) Q(x,dz)-\alpha(y,z) Q(y,dz)] \label{eq:term1MH} \\
&- \varphi(x) \int \alpha(x,z) Q(x,dz) + \varphi(y) \int \alpha(y,z) Q(y,dz) \label{eq:term2MH}
\ee
Focusing now on \eqref{eq:term1MH}, we have
\be
\eqref{eq:term1MH}  &= \int \varphi(z) (\alpha(x,z)-\alpha(y,z))Q(x,dz) + \int \varphi(z) \alpha(y,z) (Q(x,dz)-Q(y,dz)) \\
&\le \int \varphi(z) |\alpha|_{\Lip(d)} d(x,y) Q(x,dz) + \int \varphi(z) (Q(x,dz)-Q(y,dz)) \\
&\le C_1 |\alpha|_{\Lip(d)} d(x,y) + C_0 d(x,y) 
\ee
Recognizing that if $\alpha(x,z)$ is Lipschitz in its first argument, then so is $\phi(x,z) = \varphi(x) \alpha(x,z)$, and applying a similar argument to the above gives a similar bound for \eqref{eq:term2MH}.
\end{proof}

We now show these conditions for the Gaussian process
application.
The next remark implies that we can work with the ratio of the target densities to verify that $\alpha$ is Lipschitz; the proof is immediate.
\begin{remark} \label{rem:BetaLip}
Define $\kappa(x,y)$ as in \eqref{eq:AccRatioKappa} 
so that
$\kappa(x,y) = \frac{q(y,x)}{q(x,y)}$ 
Then $\kappa \in \Lip(d)$ implies $\alpha \in \Lip(d)$.
\end{remark}
This implies that, for example, it is enough to check that $\kappa(x,y)$ has bounded derivative.
We show that $\sup_x \frac{\partial}{\partial y} \kappa(x,y) < \infty$ in Section \ref{sec:GPApproxDisc}. 
The proof of
$\sup_y \frac{\partial}{\partial x} \kappa(x,y) < \infty$ is similar and omitted.

Finally, we show \eqref{eq:QWass} for the Gaussian process application. Without loss of
generality, take $x < y$. Then we have, using the Jacobi theta representation of $Q$,
\be
d(\delta_x Q,\delta_y Q) &= \sup_{|\varphi|_{\Lip(d)}<1} \int \varphi(z) (Q(x,dz)-Q(y,dz)) \\
&=  \frac1{2\pi} \sup_{|\varphi|_{\Lip(d)}<1} \int_{-\pi-x}^{\pi-x}\varphi(\xi+x) \vartheta_3(-\tfrac{\xi}2,e^{-v/2}) d\xi - \int_{-\pi-y}^{\pi-y} \varphi(\xi+y) \vartheta_3(-\tfrac{\xi}2,e^{-v/2}) d\xi \\
&\le \frac{|x-y|}\delta + \int_{\pi-y}^{\pi-x} \frac1{2\pi} \vartheta_3(-\frac{\xi}2,e^{-v/2}) d\xi - \int_{-\pi-y}^{-\pi-x} \frac1{2\pi} \vartheta_3(-\frac{\xi}2,e^{-v/2}) d\xi \\
&\le \frac{|x-y|}\delta + 2 C |x-y|, 
\ee
where $\vartheta_3$ is the third Jacobi theta function, and the last
step followed because $\vartheta_3(-\frac{\xi}2,e^{-v/2})$ is clearly bounded
since $e^{-v/2} \in (0,1)$.

\subsection{Approximating Kernels for Gaussian Process Models} \label{sec:GPApproxDisc}
Now consider the approximating kernel $\P_\epsilon$ that
has identical $\alpha(x,y)$ to the exact kernel but uses
an approximating $Q_\epsilon$ that only proposes points $\theta_k$
on an $\epsilon$-discretiztion of $\X$.
Define 
$\mathcal I_j = \{ y : \argmin_k |y-\theta_k| =j \}$
and observe that  
$Q_\epsilon = \sum_{k=1}^{\infty} \delta_{\theta_k} Q(x,\mathcal I_k)$.
For any $\varphi$ we have
\be
(\P \varphi - \P_\epsilon \varphi)(x) 
&= \int (\varphi(y) \alpha(x,y) + \varphi(x) \{1-\alpha(x,y)\}) (Q-Q_\epsilon)(x,dy) 
\ee

So we would like to bound on $Q-Q_\epsilon$ in the Wasserstein-$d$ metric. We have for any 
$\varphi \in \Lip_1(d_\beta)$
\be
\int \varphi(y) (Q-Q_\epsilon)(x,dy) &= \sum_k \int_{\mathcal I_k} \varphi(y) (Q-Q_\epsilon)(x,dy) \\
&\le\left| \sum_k \varphi(\theta_k) Q(x,\mathcal I_k)  - \int_{\mathcal I_k} 
(\varphi(\theta_k) + \epsilon) Q(x,dy) \right| \\
&\le \int \epsilon Q(x,dy) = \epsilon.
\ee
It is worth pointing out that so far this argument holds for any $Q_\epsilon$ 
obtained by an $\epsilon$-discretization of the support of $Q$.

Now we need only show that $\alpha(x,y) \in \Lip(d)$. By Remark \ref{rem:BetaLip},
it is enough to show that $\kappa(x,y)$ has uniformly bounded first derivative. 
We have
\begin{remark} \label{rem:BoundedDerivative}
 There exists a constant $C < \infty$ such that
 $\sup_x \frac{\partial}{\partial y} \kappa(x,y) < C$.
\end{remark}
The proof is given in the Appendix.
It follows that if $Q_\epsilon$ is obtained
from an $\epsilon C^{-1}$-discretization of the support of $Q$, then
Assumption \ref{ass:WassersteinApprox} holds.

\subsection{Use of low-rank approximations}
In the previous example, the only source of approximation error was the use
of an approximate proposal $Q_\epsilon$, and it was enough to uniformly 
bound the derivative of $\alpha$ to control the approximation error. In this
section, we consider a variation on the previous algorithm where both an approximate
proposal $Q_\epsilon$ and an approximate acceptance probability $\alpha_\epsilon$ are
used. 

When the number of points $N$ at which the process is sampled is large, it is 
computationally and numerically difficult to compute a spectral decomposition 
of $\Sigma(x,W)$ at even a single point. Therefore in addition to 
discretizing the state space for $x$, it is common to approximate 
$\Sigma(x,W)$ by its partial spectral decomposition
$\Sigma = U \Lambda U' \approx U \Lambda_\epsilon U'$
where $\Lambda_\epsilon$ is a diagonal matrix that is equal to $\Lambda$ in its 
first $r$ diagonal entries and is zero in its remaining diagonal entries. A more
accurate approximation is possible by writing $\Sigma = I + S$ for a low-rank matrix $S$, but
for simplicity we use the standard low-rank approximation. 
The resulting algorithm therefore has both an approximate proposal $Q_\epsilon$, 
where the approximation error arises from discretization, and an approximated 
acceptance probability $\alpha_\epsilon$, where the approximation error arises 
from using a partial spectral decomposition.

The approximate acceptance ratio $\alpha_\epsilon$ can be expressed as 
$\alpha_\epsilon(x,y) = 1 \wedge \frac{\zeta(y)}{\zeta(x)}$ where
\be
\zeta(x) = (b + \sum_{i=1}^r 
(1+\lambda_i(x)^{-1} + N-r)^{\frac{a+N}2 } \prod_{i=1}^r (1+ 
\lambda_i(x))^{1/2} 
\ee
for $\lambda_i(x)$ the $i$th largest eigenvalue of $\Sigma(x)$. For the algorithm
that both discretizes the proposal kernel and uses the low-rank approximation
to approximate the acceptance ratio in defining $\P_\epsilon$, we have
\begin{remark}
 For every $\epsilon>0$, there exists a $C < \infty$ and $r_\epsilon(x,y) \le N$ such that
 if a rank $r_\epsilon(x,y)$ approximation to $\Sigma(x)$ and $\Sigma(y)$ is used to compute
 $\alpha_\epsilon(x,y)$, the resulting $\P_\epsilon$  
 will achieve Assumption \ref{ass:WassersteinApprox} when $Q_\epsilon$ uses a $C^{-1} \epsilon$-discretization of $\X$.
\end{remark}
\begin{proof}
For any $\varphi$ we have
\be
(\P \varphi - \P_\epsilon \varphi)(x) &= \int \varphi(y) \alpha(x,y) Q(x,dy) + \int 
\varphi(x) (1-\alpha(x,y)) Q(x,dy) \\
&- \int \varphi(y) \alpha_\epsilon(x,y) Q_\epsilon(x,dy) - \int \varphi(x) 
(1-\alpha_\epsilon(x,y)) 
Q_\epsilon(x,dy),
\ee
and adding and subtracting, we get
\be \label{eq:AlphaQ}
\alpha Q - \alpha_\epsilon Q_\epsilon = \alpha (Q- Q_\epsilon) + (\alpha-\alpha_\epsilon) Q_\epsilon. 
\ee
We already know how to deal with the first term, so it remains to handle the second term. For $\varphi \in \Lip_1(d)$, we need
\be
\int \varphi(y) (\alpha-\alpha_\epsilon)(x,y) Q_\epsilon(x,dy) \le \epsilon,
\ee
which depends on how well $\alpha$ approximates $\alpha_\epsilon$, rather than how well $Q$ approximates $Q_\epsilon$. If $\varphi \in \Lip_1(d)$, then $|\varphi|_\infty < 1$, so
\be
\int \varphi(y) (\alpha-\alpha_\epsilon)(x,y) Q_\epsilon(x,dy) 
&\le \int (\alpha-\alpha_\epsilon)(x,y) Q_\epsilon(x,dy), 
\ee
so we need only make the integral on the right side small. We have
\be
\int (\alpha-\alpha_\epsilon)(x,y) Q_\epsilon(x,dy) = \sum_k (\alpha-\alpha_\epsilon)(x,\theta_k) Q_\epsilon(x,\theta_k), 
\ee
so the desired bound will follow if
\be \label{eq:GPApproxCondition}
\sup_{\theta_k} (\alpha-\alpha_\epsilon)(x,\theta_k) < \epsilon.
\ee
It is always possible to make $\epsilon = 0$ by putting $r=N$, though naturally this would eliminate any computational advantage. Regardless, it is clear that for every $\epsilon$ and every $x,y$ there exists $r_\epsilon(x,y) \le N$ such that for $r \ge r_\epsilon(x,y)$ we have 
$(\alpha-\alpha_\epsilon)(x,y) < \epsilon$.
\end{proof}
Evidently, by choosing the rank of the partial spectral decomposition in an adaptive way depending on the state, the proposal, and the desired approximation error, we can achieve \eqref{eq:GPApproxCondition}. Numerical experiments showing that this approximation can be very accurate in some cases using $r \ll N$ can be found in \cite{johndrow2017coupling}.

\subsection{Approximation on an unbounded state space}
We conclude consideration of the Gaussian process example by considering the case where the state space is unbounded. Consider the model in \eqref{eq:GPLikelihood} with $x_2=x_3=1$ known. We parametrize the model in terms of the remaining unknown as
\be \label{eq:GPLikelihoodUnbounded}
L(z, x) &=  e^{-\frac12 \log |2\pi (I+\Sigma(x))| - \frac12 z'(I+\Sigma(x))^{-1} z} \equiv e^{-\Phi(x,z)} \\
\{\Sigma(x)\}_{ij} &= e^{-x^2 \|w_i - w_j\|^2_2}
\ee
and place a standard Gaussian prior on $x$ so that the target density satisfies
$m(x) \propto L(z, x) e^{-\frac{x^2}{2}}$. Because $z$ is not a state variable, we often write
$\Phi(x)$ in lieu of $\Phi(x,z)$. As above, we consider $\P_\epsilon$ that 
both discretizes $\X$ and uses a low-rank approximation to $\Sigma(x)$. 
Here, the state space $\X = \mathbb R$ and we consider Metropolis-Hastings with proposal
\be
y = \phi x + \sqrt{1-\phi^2} \xi, \quad \xi \sim N(0,1)
\ee
for $\phi \in (-1,1)$. This is a simple example of the \emph{pre-conditioned Crank-Nicolson} algorithm studied in \citet{hairer2014spectral}, wherein it is shown that under fairly general conditions this Markov chain satisfies Theorem \ref{thm:WtdWasserstein} with $d(x,y) = 1 \wedge |x-y|$. The acceptance ratios for the pCN algorithm are given by
\be
\alpha(x,y) = 1 \wedge e^{\Phi(x) - \Phi(y)}.
\ee
The key requirements to show the weak Harris theorem (Theorem \ref{thm:WtdWasserstein}) are a Lipschitz condition for $\Phi$ and that the acceptance ratios can be bounded from below in a neighborhood of the current state. The following two lemmas verify these conditions. 
\begin{lemma}
 The function $\Phi$ is globally Lipschitz.
\end{lemma}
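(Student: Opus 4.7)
The plan is to show that $\Phi$ has a uniformly bounded derivative on $\mathbb{R}$, which immediately gives the Lipschitz property by the mean value theorem. From the definition,
\begin{equation*}
\Phi(x) = \tfrac{1}{2}\log|2\pi(I+\Sigma(x))| + \tfrac{1}{2}z'(I+\Sigma(x))^{-1}z,
\end{equation*}
so using the standard matrix-calculus identities $\tfrac{d}{dx}\log|A(x)| = \tr(A(x)^{-1}A'(x))$ and $\tfrac{d}{dx}A(x)^{-1} = -A(x)^{-1}A'(x)A(x)^{-1}$, we get
\begin{equation*}
\Phi'(x) = \tfrac12 \tr\bigl((I+\Sigma(x))^{-1}\Sigma'(x)\bigr) - \tfrac12 z'(I+\Sigma(x))^{-1}\Sigma'(x)(I+\Sigma(x))^{-1}z.
\end{equation*}

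First I would bound $(I+\Sigma(x))^{-1}$. Since $\Sigma(x)$ is a covariance matrix it is positive semidefinite for every $x$, so $I+\Sigma(x) \succeq I$ and therefore $\|(I+\Sigma(x))^{-1}\|_{\mathrm{op}} \le 1$ uniformly in $x$. Next I would bound $\Sigma'(x)$. Writing $D_{ij} = \|w_i-w_j\|_2^2 \ge 0$ with $D_{ii}=0$, we have $\Sigma'(x)_{ij} = -2x D_{ij}e^{-x^2 D_{ij}}$. Each entry is continuous in $x$, equals $0$ at $x=0$, and (for $i\ne j$, where $D_{ij}>0$) decays like a Gaussian so tends to $0$ as $|x|\to\infty$; the diagonal entries are identically zero. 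Hence every entry of $\Sigma'(x)$ is a bounded continuous function, and thus $\|\Sigma'(x)\|_{\mathrm{op}}$ is bounded by some finite constant $M$ independent of $x$.

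Combining these, the trace term is bounded by $\tfrac{N}{2}\|(I+\Sigma(x))^{-1}\|_{\mathrm{op}}\|\Sigma'(x)\|_{\mathrm{op}} \le \tfrac{NM}{2}$, and the quadratic form term is bounded by $\tfrac{1}{2}\|z\|^2 \|(I+\Sigma(x))^{-1}\|_{\mathrm{op}}^2 \|\Sigma'(x)\|_{\mathrm{op}} \le \tfrac{M\|z\|^2}{2}$. These give $|\Phi'(x)|\le C$ for $C = \tfrac{M}{2}(N+\|z\|^2)$, independent of $x$ (note $z$ is fixed data). Then for any $x,y\in\mathbb{R}$, $|\Phi(x)-\Phi(y)| \le C|x-y|$ by the mean value theorem, establishing global Lipschitz continuity.

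The main obstacle — really the only substantive point — is verifying that the entries of $\Sigma'(x)$, which contain a factor of $x$, remain bounded for large $|x|$. This is precisely where the Gaussian kernel structure is used: the polynomial growth in $x$ is dominated by the $e^{-x^2 D_{ij}}$ decay whenever $D_{ij}>0$, and the $D_{ij}=0$ diagonal entries contribute nothing. No compactness of the state space is needed.
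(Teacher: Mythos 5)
Your proof is correct and follows essentially the same route as the paper: differentiate $\Phi$, observe that the entries of $\Sigma'(x)$ have the form $-2x\|w_i-w_j\|^2 e^{-x^2\|w_i-w_j\|^2}$ and are therefore uniformly bounded because the Gaussian decay dominates the linear factor in $x$, and combine this with the uniform eigenvalue bounds on $I+\Sigma(x)$ to bound the trace and quadratic-form terms. The only cosmetic difference is that you bound $\Sigma'(x)$ via entrywise and operator-norm estimates while the paper uses the Frobenius norm; the substance is identical.
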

\begin{proof}
 We differentiate $\Phi$ to obtain
 \be
 \frac{\partial}{\partial x} \Phi(x;z)&= \frac12 \frac{|2\pi M(x)| \tr((2\pi M(x))^{-1} D(x))}{|2\pi M(x)|} - \frac12 z' M(x)^{-1} D(x) M(x)^{-1} \\
 &= \frac12 \tr((2\pi M(x))^{-1} D(x)) - \frac12 z' M(x)^{-1} D(x) M(x)^{-1} z.
 \ee
 Since
 \be
\{D(x)\}_{ij} &= -2 x \| w_i-w_j\|_2^2 e^{-x^2 \| w_i - w_j \|_2^2}
\ee
so that
\be
\|D(x)\|_F^2 = \sum_i \sum_j 4 x^2 \|w_i-w_j\|_2^4 e^{-2 x^2 \| w_i - w_j\|_2^2}.
\ee
The function $4 x^2 \delta e^{-2 x^2 \delta}$ has maxima at $\pm \frac1{\sqrt{2 \delta}}$ and a minimum at zero, is bounded, and converges to zero at $x = \pm \infty$. It follows that the entries of $D$ are uniformly bounded, so there exists $\bar D$ such that $\|D(x)\|_F^2<\bar D < \infty$. Clearly $M(x) = I + \Sigma(x)$ has eigenvalues bounded away from zero and infinity. It follows that the derivative of $\Phi$ is uniformly bounded.
\end{proof}

\begin{lemma}
 There exist constants $-\infty<c<C<\infty$ such that $c < \inf_x \Phi(x) < \sup_x \Phi(x) < C$. 
\end{lemma}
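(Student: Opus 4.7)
The plan is to bound the eigenvalues of $M(x) := I + \Sigma(x)$ uniformly in $x$ and then observe that both terms defining $\Phi(x)$ are continuous functions of these eigenvalues (together with $z$).

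First I would argue that $\Sigma(x)$ is positive semidefinite for every $x \in \mathbb{R}$: the squared exponential is a positive definite kernel on $\mathbb{R}^p$, so the Gram matrix $\{\Sigma(x)\}_{ij} = e^{-x^2 \|w_i - w_j\|^2}$ is PSD. Since $\Sigma(x)$ has unit diagonal, its trace is $N$, so its eigenvalues lie in $[0,N]$, and consequently the eigenvalues $\lambda_i(M(x))$ of $M(x) = I + \Sigma(x)$ lie in the interval $[1, N+1]$ for every $x \in \mathbb{R}$.

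From this, the log-determinant piece of $\Phi$ satisfies
\be
\tfrac{N}{2} \log(2\pi) \;\le\; \tfrac12 \log |2\pi M(x)| \;=\; \tfrac{N}{2}\log(2\pi) + \tfrac12 \sum_{i=1}^N \log \lambda_i(M(x)) \;\le\; \tfrac{N}{2}\log(2\pi) + \tfrac{N}{2} \log(N+1),
\ee
which is bounded above and below uniformly in $x$. For the quadratic form, $M(x)^{-1}$ is symmetric positive definite with eigenvalues in $[(N+1)^{-1},1]$, so
\be
\frac{\|z\|^2}{2(N+1)} \;\le\; \tfrac12 z' M(x)^{-1} z \;\le\; \tfrac12 \|z\|^2,
\ee
again uniformly in $x$. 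Adding the two bounds gives the claim.

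There is no real obstacle here: the only non-immediate ingredient is that the squared exponential is positive definite, which is a classical fact. Everything else is the elementary observation that a PSD matrix with unit diagonal has eigenvalues in $[0,\mathrm{tr}]$, which propagates to uniform eigenvalue bounds on $M(x)$ and hence on $\log|M(x)|$ and $z'M(x)^{-1}z$.
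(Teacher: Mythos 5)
Your proof is correct and follows essentially the same route as the paper: bound the eigenvalues of $\Sigma(x)$ uniformly in $[0,N]$ (the paper gets this from $0\le e^{-x^2\delta}\le 1$ and positive-definiteness, you from the trace of a PSD matrix with unit diagonal — equivalent), hence the eigenvalues of $I+\Sigma(x)$ lie in $[1,N+1]$, and then bound the log-determinant and the quadratic form $z'(I+\Sigma(x))^{-1}z$ separately. No gaps.
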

\begin{proof}
 Since $0 \le e^{-x^2 \delta} \le 1$ for all nonnegative $\delta$, and $\Sigma(x)$ is positive-definite, the eigenvalues of $\Sigma(x)$ satisfy 
 \be
 0 \le \inf_x \lambda_{\min}(\Sigma(x)) \le \sup_x \lambda_{\max}(\Sigma(x)) \le N.
 \ee
 So $\log |I+\Sigma(x)|$ is bounded below by zero and above by a finite constant. We also have
 \be
 \|z\|_2^2 (1+N)^{-1} \le z'(I+\Sigma(x))^{-1} z \le \|z\|_2^2.
 \ee
 The result follows. 
\end{proof}

The two lemmas give $\tilde d_1(\delta_x \P^n, \delta_y \P^n) \le \bar \alpha \tilde d_1(x,y)$ using the conditions in \citet{hairer2014spectral}. It follows that there exists a $\beta > 0$ such that $\tilde d_\beta(\delta_x \P, \delta_y \P) \le \bar \alpha \tilde d_\beta(x,y)$, which is essentially a consequence of the equivalence of the conditions needed for Theorem \ref{thm:WtdWasserstein} and the conditions used in \citet{hairer2014spectral}.

We now show the following error condition.
\begin{theorem}
For any $\epsilon > 0$ there exists $r(\epsilon) \le N$ and $\epsilon_0$ such that if $\P_\epsilon$ approximates the pCN Markov operator $\P$ by using an $\epsilon_0$-discretization of $\X$ and approximating $\alpha$ by $\alpha_\epsilon$ using $r(\epsilon)$ eigenvectors, we have
 \be
\tilde d_\beta(\delta_x \P, \delta_x \P_\epsilon) \le \epsilon(1+ \sqrt{\beta V(x)}).
 \ee
\end{theorem}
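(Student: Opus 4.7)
The plan is to construct a synchronized coupling of $\delta_x \P$ and $\delta_x \P_\epsilon$, apply Cauchy-Schwarz to separate the underlying distance from the Lyapunov weight in $\tilde d_\beta$, and bound each factor. This mirrors the argument used in the proof of Theorem~\ref{thm:WtdWasserstein}.

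First I would couple the two chains as follows: generate a single proposal $y^* \sim Q(x,\ccdot)$ from the exact pCN proposal, set $y^\epsilon = \argmin_{\theta_k} |y^* - \theta_k|$ so that $y^\epsilon \sim Q_\epsilon(x,\ccdot)$ with $|y^*-y^\epsilon| \le h$, where $h$ is the grid spacing, and draw a single uniform $U$ to drive both accept/reject steps. Let $X_1$ be the exact chain's next state (accepting $y^*$ iff $U < \alpha(x,y^*)$) and $X_1^\epsilon$ the approximate chain's next state (accepting $y^\epsilon$ iff $U < \alpha_\epsilon(x,y^\epsilon)$). This defines a coupling $\Gamma$ of $\delta_x \P$ and $\delta_x \P_\epsilon$.

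Next, apply Cauchy-Schwarz to the integrand defining $\tilde d_\beta$:
\be
\tilde d_\beta(\delta_x \P, \delta_x \P_\epsilon)^2 \le \Bigl(\int d(u,v)\, \Gamma(du,dv)\Bigr) \cdot \Bigl(\int (2+\beta V(u)+\beta V(v))\, \Gamma(du,dv)\Bigr).
\ee
The Lyapunov-weight factor is bounded by $2 + 2\beta(\gamma V(x) + K) \le C_1(1+\beta V(x))$ using that $V$ is a Lyapunov function for both $\P$ and $\P_\epsilon$. For the distance factor, I split on whether the two chains' accept/reject decisions agree. When both accept, $d(X_1,X_1^\epsilon) \le h/\delta$; when both reject, $d = 0$; when they disagree, $d \le 1$, and this event occurs (given $y^*,y^\epsilon$) with conditional probability at most
\be
|\alpha(x,y^*) - \alpha_\epsilon(x,y^\epsilon)| \le |\alpha(x,y^*)-\alpha(x,y^\epsilon)| + |\alpha(x,y^\epsilon) - \alpha_\epsilon(x,y^\epsilon)|.
\ee
The first term is bounded by $L h$, where $L$ is the Lipschitz constant of $\alpha(x,\ccdot)$; this exists because $\alpha = 1\wedge e^{\Phi(x)-\Phi(y)}$, and the two lemmas preceding the theorem statement give that $\Phi$ is globally Lipschitz and uniformly bounded, so $\alpha$ is bounded away from zero and Lipschitz in $y$ with a constant independent of $x$. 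The second term is the low-rank approximation error, which can be made smaller than any prescribed threshold by choosing the rank $r(\epsilon)$ large enough, uniformly in $x,y$, because $\Phi_\epsilon \to \Phi$ as $r \to N$ and both are uniformly bounded.

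To hit the target bound, choose $h$ of order $\epsilon^2$ and $r(\epsilon)$ so that the low-rank term is also of order $\epsilon^2$. Then the distance factor is of order $\epsilon^2$, and combining with the Lyapunov-weight bound and taking the square root yields
\be
\tilde d_\beta(\delta_x \P, \delta_x \P_\epsilon) \le C_2\, \epsilon \sqrt{1+\beta V(x)} \le C_2\, \epsilon\,(1+\sqrt{\beta V(x)}),
\ee
and the constant $C_2$ is absorbed by rescaling the choices of $h$ and $r(\epsilon)$. The main obstacle is establishing uniform-in-$x$ control of the low-rank approximation error $|\alpha(x,y)-\alpha_\epsilon(x,y)|$ on the unbounded state space $\mathbb R$; this relies crucially on the uniform boundedness of $\Phi$ and $\Phi_\epsilon$ shown in the preceding lemma, together with quantitative control over how quickly $\Phi_\epsilon$ approaches $\Phi$ as $r \to N$, uniformly in $x$. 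Once this uniform rate is in hand, the rest of the proof is essentially bookkeeping.
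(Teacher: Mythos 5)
Your proposal is correct and follows essentially the same route as the paper: the same synchronized coupling (shared proposal noise, nearest grid point, a single uniform driving both accept/reject steps), the same decomposition into both-accept/both-reject/disagree events, and the same reliance on the global Lipschitz continuity and uniform boundedness of $\Phi$ and $\Phi_\epsilon$ from the two preceding lemmas. The only cosmetic difference is that you apply Cauchy--Schwarz to split $\tilde d_\beta$ into the distance factor and the Lyapunov-weight factor before taking expectations, whereas the paper bounds $\E[\tilde d_\beta^2(Y,Y_\epsilon)]$ directly (conditioning on the noise $\xi$ and using the exponential growth of $V$) and then invokes Jensen at the end; both yield $\epsilon(1+\sqrt{\beta V(x)})$.
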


\begin{proof}
 We construct a coupling of $Y \sim \delta_x \P$ and $Y_\epsilon \sim \delta_x \P_\epsilon$ as follows. Let
 \be
 y = \theta x + \xi, \quad y_\epsilon = \theta x +\xi^*,
 \ee
 where $\xi^*$ is the nearest point to $\xi$ in a $\epsilon_0$ discretization of $\X$. Now let $\zeta \sim \text{Uniform}(0,1)$ and accept $y$ if $\zeta < \alpha(x,y)$ and accept $y_\epsilon$ if $\zeta < \alpha_\epsilon(x,y_\epsilon)$. Then, using the fact that the Lyapunov function $V(x)$ is continuous and grows at most exponentially in $x$ (see \citet[Lemma 3.2]{hairer2014spectral} for this latter condition), we have
 \be
 \E[\tilde d^2_\beta(Y,Y_\epsilon) \mid \xi ] &\le \epsilon_0 (2 + \beta V(\theta x+|\xi|) + \beta V(\theta x + |\xi| + \epsilon_0)) \mathbf P(\text{both accept} \mid \xi)\\
 &+ (1 \wedge |\xi+\epsilon_0|)  (2 + \beta V(x) + \beta V(\theta x +
 |\xi| + \epsilon_0)) \mathbf P(\text{one accepts} \mid \xi) \\
&+ \tilde d^2_\beta(x,x)  \mathbf P(\text{both reject} \mid \xi)\\
 &\le \epsilon_0 (2 + e^{|\xi|} \beta V(\theta x) + e^{|\xi+\epsilon_0|} \beta V(\theta x)) \\
 &+ (1 \wedge |\xi+\epsilon_0|) (2 + \beta V(x) + e^{|\xi+\epsilon_0|} \beta V(\theta x)) \mathbf P(\text{one accepts} \mid \xi).
 \ee
 Now since $1 \wedge e$ is Lipschitz-1 with respect to $|\cdot|$,
 \be
 \mathbf P(\text{one accepts} \mid \xi) = \mathbf P[\zeta \text{ between } \alpha_\epsilon, \alpha \mid \xi] \le 1 \wedge |\Phi(\theta x + \xi) - \Phi_\epsilon(\theta x + \xi^*)|  .
 \ee
 Choose $r(\epsilon), \epsilon_0$ so that
 \be
 \frac{\epsilon^2}4 &> \epsilon_0 \E[ e^{|\xi + \epsilon_0|} (1 \wedge |\xi+\epsilon_0|)  ],\\
 \frac{\epsilon^2}4 &> \sup_{|\xi-\xi^*| \le \epsilon_0} \sup_x \E[ e^{|\xi + \epsilon_0|} (1 \wedge |\xi+\epsilon_0|) (1 \wedge |\Phi(\theta x + \xi) - \Phi_\epsilon(\theta x + \xi^*)|)],
 \ee 
 which is always possible since the expectation in the first line is finite, and because $\Phi$ and $\Phi_\epsilon$ are Lipschitz we can make the second quantity arbitrarily small by taking $r(\epsilon) \to N, \epsilon_0 \to 0$. Then
 \be
 \E[\tilde d^2_\beta(Y,Y_\epsilon) ] &\le \epsilon_0 \E[2 + e^{|\xi|} \beta V(\theta x) + e^{|\xi+\epsilon_0|} \beta V(\theta x)] \\
 &+ (2 + \beta V(x)) \E[(1 \wedge |\xi+\epsilon_0|) (1 \wedge |\Phi(\theta x + \xi) - \Phi_\epsilon(\theta x + \xi^*)|)] \\
 &+ \beta V(\theta x) \E[e^{|\xi+\epsilon_0|} (1 \wedge |\xi+\epsilon_0|) (1 \wedge |\Phi(\theta x + \xi) - \Phi_\epsilon(\theta x + \xi^*)|)  ] \\
 &\le \frac{\epsilon^2}4 (2 + 2 \beta V(x)) + \frac{\epsilon^2}4 (2 + \beta V(x)) + \frac{\epsilon^2}4 \beta V(x). \label{eq:pCNError}
 \ee
 So we obtain using Jensen's inequality and triangle inequality
 \be
 \E[\tilde d^2_\beta(Y,Y_\epsilon) ] &\le \epsilon^2 (1+\beta V(x)) \\
 \E[\tilde d_\beta(Y,Y_\epsilon) ] & \le \epsilon (1 + \sqrt{\beta V(x)}).
 \ee
\end{proof}

Finally, we show that $\P_\epsilon$ and $\P$ have a common Lyapunov function when the discretization is sufficiently fine. First note that the kernel $\P_\epsilon$ that differs from $\P$ only in the use of a truncated eigenfunction expansion is just a special case of the kernel $\P_m$ in \citet{hairer2014spectral}. Thus $\P_\epsilon$  has the same Lyapunov function as $\P$ by \cite[Lemma 3.2]{hairer2014spectral}, which we can take to be continuous and grow no faster than an exponential in $x$. Thus it remains to show that preservation of the Lyapunov function by discretization. We have
\be
(\P_\epsilon V)(x) &= \sum_k V(\theta_k) \P(x,I_k) \le \sum_k e^{\epsilon} \int_{I_k} V(y) \P(x,dy) \\
&\le e^{\epsilon} \sum_k \int_{I_k} V(y) \P(x,dy) \le e^{\epsilon} (\P V)(x) \le e^{\epsilon}(\gamma V(x) +K).
\ee
Thus taking $\epsilon$ sufficiently small that $\gamma e^{\epsilon}< 1$, we obtain the desired result. Error bounds can now be obtained by application of Theorem \ref{thm:WtdWErrorBound}.

\appendix

\section{Additional proofs}

\subsection{Proof of Theorem \ref{thm:ErrorBound}} 
The following calculation follows the spirit of \cite{GlynnMeyn1996, KontoyiannisMeyn2003}.
For any $\phi$ with $\phi \leq V^{1/2}$ define 
$\widetilde \phi = \phi - \mu \phi$
\begin{align}
  \label{eq:1}
  U(x) = \sum_{k=0}^\infty \P^k \widetilde \phi
\end{align}

Now for $p \in (0,1]$ we have $\gamma_p \in (0,1)$ and $K_p>0$ so that
\begin{align}
  \label{eq:2}
  \P V^p (x) \leq \gamma_p V^p(x)+ K_p
\end{align}
and
$\lvertiii \P \widetilde{\phi} \rvertiii_{\beta_p} \leq \alpha_p \lvertiii \widetilde{\phi} \rvertiii_{\beta_p}$ 
is the weighted TV norm built on $V^p$ with an appropriate $\beta_p$. 
Now observe that
\begin{align*}
  \lvertiii U \rvertiii_{\beta_p} \leq  \sum_{k=0}^\infty \alpha_p^k \lvertiii \widetilde{\phi} \rvertiii_{\beta_p} =   \frac{\lvertiii \widetilde{\phi} \rvertiii_{\beta_p}}{1- \alpha_p}
\end{align*}
with $p=\frac12$. 
Observe that since $\phi < V^{\frac12}$, we have $\mu \phi < \mu V^{\frac12} < \infty$, so $|\widetilde \phi| < \mu V^{\frac12} + V^{\frac12}$, and
\be
| U(x) | &\le (\mu V^{\frac12}+ V^{\frac12}(x)) \frac{1}{1-\alpha_{(1/2)}} \le C(1+V^{\frac12}(x))
\ee
for $C = \frac{1 \vee \mu V^{\frac12}}{1-\alpha_{(1/2)}}$.  
This implies that 
\be \label{eq:PoissonBound}
  |U(x)| \leq C( 1 +  V^\frac12(x)).
\ee

Note that
\be \label{eq:Poisson}
  (\P-I)U(x) = - \widetilde{\phi}(x)
\ee
so
\begin{align*}
  U(X_n^\epsilon) -U(X_0^\epsilon)&= \sum_{k=0}^{n-1} U(X_{k+1}^\epsilon) -U(X_k^\epsilon) 
  = \sum_{k=0}^{n-1}  [U(X_{k+1}^\epsilon) -\P_\epsilon U(X_k^\epsilon) ]+ \sum_{k=0}^{n-1} (\P_\epsilon -I) U(X_k^\epsilon) \\
  & = \sum_{k=0}^{n-1}  [U(X_{k+1}^\epsilon) -\P_\epsilon U(X_k^\epsilon) ]+ \sum_{k=0}^{n-1} (\P -I) U(X_k^\epsilon) + \sum_{k=0}^{n-1} (\P_\epsilon -\P) U(X_k^\epsilon) 
\end{align*}
Using \eqref{eq:Poisson} and defining the Martingale $m_{k+1}^\epsilon = U(X_{k+1}^\epsilon) -\P_\epsilon U(X_k^\epsilon) $ and $M_n^\epsilon=\sum_{k=1}^n m_{k}^\epsilon$,
we have
\be \label{eq:PoissonEquation}
         \frac1n \sum_{k=0}^{n-1} \phi(X_k^\epsilon) - \mu \phi   = \frac{U(X_0^\epsilon) -U(X_n^\epsilon)}{n}  + \frac1n M_n^\epsilon +   \frac1n\sum_{k=0}^{n-1} (\P_\epsilon -\P) U(X_k^\epsilon) 
\ee
Now 
\be 
\E[ (m_{k+1}^\epsilon)^2 \mid \mathcal{F}_k] &\leq \P_\epsilon (U^2)(X_k^\epsilon) - [\P_\epsilon (U)(X_k^\epsilon) ]^2 \\
\E \left[ \left( \frac1n M_n^\epsilon \right)^2 \right] &\leq \frac1{n^2} \sum_{k=1}^{n} \mathbf{E}[(m_k^\epsilon)^2], \label{eq:MartingaleBound}
\ee
and it follows from \eqref{eq:PoissonBound} that
$U^2(x) \leq 2 C^2(1+V(x))$.
So then with $X_0^{\epsilon} = x_0$
\be
\mathbf E [\P_{\epsilon}(U^2)(X_k^{\epsilon})] &\le \P_{\epsilon} 2C^2 (1+ \P_{\epsilon}^k V(x_0))  \le 2C^2+ 2C^2 \P_{\epsilon}^{k+1} V(x_0).
\ee

We proceed by bounding the square of each term on the 
right side of \eqref{eq:PoissonEquation}. We have
\be \label{eq:UsualVBound}
(\P^{k+1}_{\epsilon} V)(x_0) &\le \gamma_{\epsilon}^{k+1} V(x_0) + \frac{K_\epsilon}{1-\gamma_\epsilon} \\
\ee
so
\be
\sum_{k=0}^{n-1} \mathbf E[(m_{k+1}^{\epsilon})^2] &\le 2 C^2 
\left( n + \frac{n K_\epsilon}{1-\gamma_\epsilon} + 
\frac{1-\gamma_\epsilon^n}{1-\gamma_\epsilon} V(x_0) \right) \\
\frac{1}{n^2} \sum_{k=0}^{n-1} \mathbf 
E[(m_{k+1}^{\epsilon})^2]  &\le 2 C^2 \left( \frac1n +
\frac{K_\epsilon}{n\{1-\gamma_\epsilon\}} + \frac{1-\gamma_\epsilon^n}{n^2 
\{1-\gamma_\epsilon\}} V(x_0) \right) .
\ee
where we used \eqref{eq:UsualVBound} and \eqref{eq:MartingaleBound} in the above.

Now for the term
$\frac1n\sum_{k=0}^{n-1} (\P_\epsilon -\P) U(X_k^\epsilon)$.
Since $C^{-1} |U| < 1+V^{\frac12}$, for $|\phi| < 1+V$
\be
(\P_{\epsilon}-\P)(\phi) \le \epsilon (1+\delta V) 
\ee
by Jensen's inequality
\be
(\P_{\epsilon}-\P)(\phi^{\frac12}) \le \sqrt{\epsilon (1+\delta V)} \le \sqrt{\epsilon} + \sqrt{\epsilon \delta V} \le \sqrt{\epsilon}(1+\delta^{\frac12} V^{\frac12})
\ee
so we have
\be
C^{-1} |(\P_\epsilon -\P) U(x)| \le \epsilon^{\frac12}(1 + \delta^{\frac12} V^{\frac12}(x)).
\ee
Using these inequalities, we now bound the expectation of
\be \label{eq:ErrorProductTerms}
(\P_{\epsilon} -\P) U(X_k^\epsilon) (\P_{\epsilon}-\P) U(X_j^{\epsilon}).
\ee
Taking $k\ge j$, we get
\be
\eqref{eq:ErrorProductTerms} &\le C^2 \epsilon (1 + \delta^{\frac12} V^{\frac12}(X_k^{\epsilon}))(1+\delta^{\frac12} V^{\frac12}(X_j^{\epsilon})) \\
\mathbf E \left[ \eqref{eq:ErrorProductTerms} \right] &\le C^2 \epsilon \mathbf E \left[ \mathbf E \left[ (1 + \delta^{\frac12} V^{\frac12}(X_k^{\epsilon})) \mid \mathcal{F}_j \right] (1+\delta^{\frac12} V^{\frac12}(X_j^{\epsilon})) \right] \\
&\le C^2 \epsilon \mathbf E\left[ (1 + \delta^{\frac12}  (\P^{k-j}_{\epsilon} V^{\frac12})(X_j^{\epsilon})) (1 + \delta^{\frac12} V^{\frac12}(X_j^{\epsilon})) \right] \\
&\le C^2 \epsilon \mathbf E\left[ \left(1 + \delta^{\frac12} \left( \gamma_\epsilon^{(k-j)/2} V^{\frac12}(X_j^\epsilon) + \frac{\sqrt{K_\epsilon}}{1-\sqrt{\gamma_\epsilon}}  \right) \right) (1 + \delta^{\frac12} V^{\frac12}(X_j^{\epsilon})) \right] \\
&\le C^2 \epsilon \mathbf E\left[ \left(1 + \delta^{\frac12} V^{\frac12}(X_j^\epsilon) + \frac{\sqrt{\delta K_\epsilon}}{1-\sqrt{\gamma_\epsilon}} \right) (1 + \delta^{\frac12} V^{\frac12}(X_j^{\epsilon})) \right] \\
&\le C^2 \epsilon \mathbf E\left[ (2 + 2 \delta V(X_j^\epsilon)) + \frac{\sqrt{\delta K_\epsilon}}{1-\sqrt{\gamma_\epsilon}} (1 + \delta^{\frac12} V^{\frac12}(X_j^{\epsilon})) \right] \\
&\le C^2 \epsilon \left[ 2 + 2 \delta \left(\gamma_\epsilon^j V(x_0) + \frac{K_\epsilon}{1-\gamma_\epsilon} \right) + \frac{\sqrt{\delta K_\epsilon}}{1-\sqrt{\gamma_\epsilon}} \left(1 + \delta^{\frac12} \left(\gamma_\epsilon^{j/2} V^{\frac12}(x_0) + \frac{\sqrt{K_\epsilon}}{1-\sqrt{\gamma_\epsilon}} \right) \right) \right] \\
&\le C^2 \epsilon \left[ \left( 2 + 2 \delta \frac{K_\epsilon}{1-\gamma_\epsilon} + \frac{\sqrt{\delta K_\epsilon}}{1-\sqrt{\gamma_\epsilon}} + \frac{\delta K_\epsilon}{(1-\sqrt{\gamma_\epsilon})^2} \right) +  2 \delta \gamma_\epsilon^j V(x_0) + \delta \frac{\sqrt{K_\epsilon}}{1-\sqrt{\gamma_\epsilon}} \gamma_\epsilon^{j/2} V^{\frac12}(x_0) \right] \\
&\le C^2 \epsilon \left[ \left( 2 + 2 \delta \frac{K_\epsilon}{1-\gamma_\epsilon} + (1+\sqrt{\delta}) \frac{\sqrt{\delta K_\epsilon}}{1-\sqrt{\gamma_\epsilon}} + \frac{\delta K_\epsilon}{(1-\sqrt{\gamma_\epsilon})^2} \right) +   \delta \left( 2 + \frac{\sqrt{K_\epsilon}}{1-\sqrt{\gamma_\epsilon}}  \right) \gamma_\epsilon^{j/2} V(x_0) \right] \\
&\equiv C^2 \epsilon \left[ c_0 +   \delta c_1 \gamma_\epsilon^{j/2} V(x_0) \right], \label{eq:ErrorExpectation} 
\ee
where in various places we used 
\be
(\P_\epsilon V^{\frac12})(x) \le \sqrt{(\P_\epsilon V)(x)} \le  \sqrt{\gamma_\epsilon V(x) + K_\epsilon} \le \sqrt{\gamma_\epsilon} V^{\frac12}(x) + \sqrt{K_\epsilon},
\ee
that $V^{\frac12} \le 1+ V$, and that $(1+\delta^{\frac12} V^{\frac12}(x))^2 \le 2 + 2 \delta V(x)$.
Note that we can bound $c_0$ as
\be
c_0 \le 2 + 5 \frac{(\delta \vee \sqrt{\delta})(K_\epsilon \vee \sqrt{K_\epsilon})}{(1-\sqrt{\gamma_\epsilon})^2}.
\ee
Observe that for $j \ge k$, we obtain the bound in \eqref{eq:ErrorExpectation} with $j$ replaced by $k$. So we get
\be
\sum_{k=0}^{n-1} \sum_{j=0}^{n-1} \mathbf E \left[ (\P_{\epsilon}-\P) U(X_k^{\epsilon}) (\P_{\epsilon}-\P) U(X_j^{\epsilon}) \right] &\le n C^2 \epsilon \left( n c_0 + \delta c_1 \frac{1-(\sqrt{\gamma_\epsilon})^n}{1-\sqrt{\gamma_\epsilon}} V(x_0)  \right) \\
\frac{1}{n^2} \sum_{k=0}^{n-1} \sum_{j=0}^{n-1} \mathbf E \left[ (\P_{\epsilon}-\P) U(X_k^{\epsilon}) (\P_{\epsilon}-\P) U(X_j^{\epsilon}) \right] &\le C^2 \epsilon \left( c_0 + \frac{\delta c_1}{n} \frac{V(x_0)}{1-\sqrt{\gamma_\epsilon}}  \right)
\ee
Finally we have
\be
\frac{(U(X_0^\epsilon) -U(X_n^\epsilon))^2}{n^2} &\le \frac{2 U^2(X_0^\epsilon) + 2 U^2(X_n^\epsilon)}{n^2} \\
\mathbf E \frac{(U(X_0^\epsilon) -U(X_n^\epsilon))^2}{n^2} &\le \frac{4C^2}{n^2} \left( \mathbf E[1+V(X_0^\epsilon)] + \mathbf E [1+V(X_n^\epsilon)] \right) \\
&\le \frac{4C^2}{n^2} \left(1+V(x_0) + \gamma_{\epsilon}^n V(x_0) + \frac{1-\gamma_{\epsilon}^n}{1-\gamma_{\epsilon}} K_{\epsilon} \right) \\
&\le \frac{4C^2}{n^2} \left( 1 + (1+\gamma_\epsilon^n) V(x_0) + \frac{K_{\epsilon}}{1-\gamma_{\epsilon}} \right) 
\ee
Giving us
\be
\mathbf E \left( \frac1n \sum_{k=0}^{n-1} \phi(X_k^\epsilon) - \mu \phi  
\right)^2 &\le  6 C^2 \left(\frac1n +\frac{K_\epsilon}{n\{1-\gamma_\epsilon\}} + 
\frac{1-\gamma_\epsilon^n}{n^2 \{1-\gamma_\epsilon\}} V(x_0) \right) \\
&+ 3 C^2 \epsilon \left( c_0 + \frac{\delta c_1}{n} \frac{V(x_0)}{1-\sqrt{\gamma_\epsilon}}  \right) \\
&+ \frac{12 C^2}{n^2} \left( 1 + (1+\gamma_\epsilon^n) V(x_0) + 
\frac{K_{\epsilon}}{1-\gamma_{\epsilon}} \right) \\
&\le 3 C^2 \epsilon c_0 + \frac{3 C^2}{n} \left(2+ \frac{2K_{\epsilon}}{1-\gamma_\epsilon} + \frac{ \epsilon \delta c_1 V(x_0)}{1-\sqrt{\gamma_\epsilon}} \right) + \mathcal{O}\left( \frac{1}{n^2} \right)
\ee
concluding the proof of Theorem \ref{thm:ErrorBound}.

\subsection{Proof of Proposition \ref{prop:LyapunovProbit}}
The Lyapunov function here is also used in \cite{roy2007convergence}, but we use different estimates for the constants and also show the result for $\P_\epsilon$. 

Consider standard binomial probit
$z_i \sim \Binom(m_i,p_i)$
where $p_i = \Phi(w_i x)$. Consider
\be
V(x) = x'(W'DW)x, \quad D = \diag(m_1,\ldots,m_N).
\ee
Then
\be
\E[V(x^*) \mid (x^*,\Omega)] &= \E[\E[V(x^*) \mid \Omega] \mid x] \\
&= \tr(W'DW(W'DW)^{-1}) + \E[x^* \mid \Omega] (W'DW) \E[x^* \mid \Omega] \\
&= p + \Omega' W (W'DW)^{-1} (W'DW) (W'DW)^{-1} W' \Omega \\
&= p + \Omega' W (W'DW)^{-1} W' \Omega 
\ee
so then
\be
\E[V(x^*) \mid x] &= p+\E[\Omega' W (W'DW)^{-1} W' \Omega \mid x] \\
&= p+\E[\Omega' D^{-1/2} D^{1/2} W(WDW)^{-1} W' D^{1/2} D^{-1/2} \Omega \mid x] \\
&\le p + \E[\Omega'D^{-1} \Omega \mid x] = p + \sum_i \E[\omega_i^2/m_i \mid x].
\ee

Now putting $\xi_i = w_i x$
\be
\E[\omega_i] = z_i \left(\xi_i + \frac{\phi(-\xi_i)}{1-\Phi(-\xi_i)}\right) + (m_i - z_i) \left( \xi_i - \frac{\phi(\xi_i)}{1-\Phi(\xi_i)} \right)
\ee
and
\be
\var[\omega_i] = z_i \left\{ 1 - \frac{\xi_i \phi(-\xi_i)}{1-\Phi(-\xi_i)} - \left(\frac{\phi(-\xi_i)}{1-\Phi(-\xi_i)} \right)^2 \right\} + (m_i - z_i) \left\{ 1 + \frac{\xi_i \phi(\xi_i)}{1-\Phi(\xi_i)} - \left( \frac{\phi(\xi_i)}{1-\Phi(\xi_i)} \right)^2 \right\}
\ee

We use the inequality for $\xi \ge 0$
\be
\frac{2}{\xi+\sqrt{\xi^2+4}} < \frac{1-\Phi(\xi)}{\phi(\xi)} \le \frac{2}{\xi+\sqrt{\xi^2+8/\pi}},
\ee
which implies there exists a function $h(\xi)$ uniformly bounded above by $\sqrt{8/\pi}$ and below by 0 such that 
\be
r(\xi) \equiv \frac{\phi(\xi)}{1-\Phi(\xi)} = \xi+h(\xi)
\ee
for all $\xi \ge 0$ and so for $\xi<0$ we have
$r(-\xi) = -\xi+h(-\xi)$.
Finally, for any $\xi<0$,
$r(\xi) \le 2 \phi(\xi) \le 2$
while for $\xi \ge 0$,  
$r(-\xi) \le 2 \phi(\xi) \le 2$.
So now if $\xi_i > 0$
\be
\E[\omega_i] &= z_i ( \xi_i + r(-\xi_i)) + (m_i - z_i) (\xi_i - (\xi_i + h(\xi_i))) \\
\var[\omega_i] &= z_i \left\{ 1 - \xi_i r(-\xi_i) - r(-\xi_i)^2 \right\} + (m_i - z_i) \left\{ 1 + \xi_i(\xi_i + h(\xi_i)) - \left( \xi_i + h(\xi_i) \right)^2 \right\}
\ee
so that
\be
\E[\omega_i^2] &= m_i - h(\xi_i) m_i (h(\xi_i) + \xi_i) + h(\xi_i) (h(\xi_i)+\xi_i)z_i - r(-\xi_i)(r(-\xi_i)+\xi_i)z_i \\
&+ (h(\xi_i) m_i+(h(\xi_i) + r(-\xi_i)+\xi_i) z_i)^2
\ee
Since $r(-\xi_i) \to 0$ at an exponential rate as $\xi_i \to \infty$, and $r(\xi_i), h(\xi_i)$ are both bounded, the leading term is just
$\xi_i^2 z_i^2 = (w_i x)^2 z_i^2$, 
so when $\xi_i > 0$ we have
\be
\E[\omega_i^2/m_i \mid \beta] = \frac{z_i^2}{m_i} (w_i x)^2 + \bigO(\xi_i).
\ee
On the other hand if $\xi_i < 0$ then by the symmetry of the problem we have
\be
\E[\omega_i^2/m_i \mid x] = \frac{(m_i - z_i)^2}{m_i} (w_i x)^2 + \bigO(\xi_i).
\ee
Now observe that
\be
x'(W'DW)x = (Wx)' D (Wx) = \sum_i m_i (w_i x)^2.
\ee
Since 
\be
\E[V(x^*) \mid x] = \sum_i \frac{((m_i - z_i) \vee z_i)^2}{m_i} (w_i x)^2 + \frac1{m_i} \bigO(|w_i x|) < \sum_i \frac{(m_i - 1)^2}{m_i} (w_i x)^2 + \frac1{m_i} \bigO(|w_i x|)
\ee
there exists a $K> 0$ such that
\be
\E[V(x^*) \mid x] &< K + \sum_i \left( \frac{(m_i - 1)^2}{m_i} + \frac1{m_i} \right) (w_i x)^2 = K + \sum_i \left( m_i - 2 + \frac2{m_i} \right) (w_i x)^2 \\
&< K+\sum_i (m_i - 1) (w_i x)^2 = K+\sum_i m_i \frac{m_i - 1}{m_i} (w_i x)^2 \\
&= \gamma V(x) + K
\ee
for $\gamma = 1 - (\max_i m_i)^{-1}$. Note we could have picked any positive constant to subtract above, and just chose $m_i^{-1}$ for convenience. 

Now, observe that the approximating kernel uses the same conditional update for $x \mid \Omega$, and that the conditional update for $\Omega \mid x$ has identical mean and covariance, which were the only quantities appearing in the calculations above. So we obtain an identical result for $\gamma_\epsilon$. In the best case, the geometric convergence rate obtained using this bound on $\gamma$ converges to zero at least at the rate $(\max_i m_i)^{-1}$. This is broadly consistent with the results in \cite{johndrow2016mcmc}, which used conductance bounds.

\subsection{Proof of Remark \ref{rem:BoundedDerivative}}
Recall
$\frac{\partial}{\partial y} \Sigma(y)^{-1} = -\Sigma^{-1} \frac{\partial 
\Sigma}{\partial y} \Sigma^{-1}$
where 
$\left( \frac{\partial \Sigma}{\partial y} \right)_{ij} = 
\frac{\partial}{\partial y} \{\Sigma(y)\}_{ij}$.
We now compute the derivative. Dependence of matrix quantities on $y$
will typically be suppressed for compactness of notation; we remind the
reader that $\Sigma, M$ are functions of $y$.
\be
\frac{\partial}{\partial y} \kappa(x,y) 
&= \{L(z \mid x,W)\}^{-1} \frac{\partial}{\partial y} |M|^{-1/2}\{b+z'M^{-1}z\}^{-\frac{a+N}2}.
\ee
Defining $D = D(y)$ as the $N \times N$ matrix with entries
$D_{ij} = -\|w_i - w_j\|^2 e^{-y \|w_i - w_j\|^2} = -\|w_i - w_j\|^2 \Sigma_{ij}$, 
we have
\be
 \frac{\partial}{\partial y} |M|^{-1/2} &= \frac12 |M|^{1/2} \tr( M D ) \\
 \frac{\partial}{\partial y} \{b+z'M^{-1}z\}^{-\frac{a+N}2} &= \frac{a+N}2 \{b + z'M^{-1} z\}^{-\frac{a+N+2}2} z'\{M^{-1} D M^{-1} \} z.
\ee
Observe that
\begin{multline*}
\left| \frac{\partial}{\partial y} \kappa(x,y) \right| \le 
|M(x)|^{1/2} \{b + z'M(x)^{-1}z\}^{\frac{a+N}2} \\
\times \bigg( |M(y)|^{-1/2} \frac{a+N}2 \{b + z'M(y)^{-1} z\}^{-\frac{a+N+2}2} |z'\{M(y)^{-1} D(y) M(y)^{-1} \} z| \\
+ \{b + z'M(y)^{-1}z\}^{-\frac{a+N}2} \frac12 |M(y)|^{1/2} |\tr\{ M(y) D(y) \}| \bigg).   
\end{multline*}

We would like to bound this uniformly away from $\infty$. In addition to the bounds in
\eqref{eq:EigenBounds}, we will also need a bound on the norm of $D$. Observe that
\be
\|D(y)\|_F^2 = \sum_{i=1}^N \sum_{j=1}^N \|w_i-w_j\|^2 e^{-y 
\|w_i-w_j\|^2} \le \sum_{i=1}^N \sum_{j=1}^N \|w_i-w_j\|^2 \equiv \bar D^2.
\ee
It follows that
$\lambda_{\max}(D(y)) \le \bar D$, 
and therefore applying standard inequalities for products of Hermitian matrices 
and quadratic forms we have
\be
\left| \frac{\partial}{\partial y} \kappa(x,y) \right| &\le 
2^{\frac{N}2} (b+\|z\|_2^2)^{\frac{a+N}2} b^{-\frac{a+N+2}2} \left( \frac{a+N}2 \bar D \|z\|_2^2 + 2^{\frac{N}2}  N \bar D  \right)
\equiv C_1,
\ee
so the derivative is uniformly bounded.

\bibliographystyle{apalike}
\bibliography{amcmc}

\begin{thebibliography}{}

\bibitem[Alquier et~al., 2014]{alquier2014noisy}
Alquier, P., Friel, N., Everitt, R., and Boland, A. (2014).
\newblock Noisy {M}onte {C}arlo: {C}onvergence of {M}arkov chains with
  approximate transition kernels.
\newblock {\em Statistics and Computing}, 25(1):1--19.

\bibitem[Banerjee et~al., 2008]{banerjee2008gaussian}
Banerjee, S., Gelfand, A.~E., Finley, A.~O., and Sang, H. (2008).
\newblock Gaussian predictive process models for large spatial data sets.
\newblock {\em Journal of the Royal Statistical Society: Series B (Statistical
  Methodology)}, 70(4):825--848.

\bibitem[Bardenet et~al., 2014]{bardenet2014towards}
Bardenet, R., Doucet, A., and Holmes, C. (2014).
\newblock Towards scaling up {M}arkov chain {M}onte {C}arlo: an adaptive
  subsampling approach.
\newblock In {\em Proceedings of the 31st International Conference on Machine
  Learning (ICML-14)}, pages 405--413.

\bibitem[Bardenet et~al., 2017]{bardenet2017markov}
Bardenet, R., Doucet, A., and Holmes, C. (2017).
\newblock {On Markov chain Monte Carlo methods for tall data}.
\newblock {\em The Journal of Machine Learning Research}, 18(1):1515--1557.

\bibitem[Chen et~al., 2014]{chen2014stochastic}
Chen, T., Fox, E., and Guestrin, C. (2014).
\newblock {Stochastic gradient Hamiltonian Monte Carlo}.
\newblock In {\em International Conference on Machine Learning}, pages
  1683--1691.

\bibitem[Glynn and Meyn, 1996]{GlynnMeyn1996}
Glynn, P.~W. and Meyn, S.~P. (1996).
\newblock A {L}iapounov bound for solutions of the {P}oisson equation.
\newblock {\em The Annals of Probability}, 24(2):916--931.

\bibitem[Haario et~al., 2001]{haario2001adaptive}
Haario, H., Saksman, E., and Tamminen, J. (2001).
\newblock {An adaptive Metropolis algorithm}.
\newblock {\em Bernoulli}, 7(2):223--242.

\bibitem[Hairer and Mattingly, 2006]{HM_06}
Hairer, M. and Mattingly, J.~C. (2006).
\newblock Ergodicity of the 2{D} {N}avier-{S}tokes equations with degenerate
  stochastic forcing.
\newblock {\em Ann. of Math. (2)}, 164(3):993--1032.

\bibitem[Hairer and Mattingly, 2011]{hairer2011yet}
Hairer, M. and Mattingly, J.~C. (2011).
\newblock {Yet another look at Harris' ergodic theorem for Markov chains}.
\newblock In {\em Seminar on Stochastic Analysis, Random Fields and
  Applications VI}, pages 109--117. Springer.

\bibitem[Hairer et~al., 2011]{hairer2011asymptotic}
Hairer, M., Mattingly, J.~C., and Scheutzow, M. (2011).
\newblock {Asymptotic coupling and a general form of Harris' theorem with
  applications to stochastic delay equations}.
\newblock {\em Probability Theory and Related Fields}, 149(1):223--259.

\bibitem[Hairer et~al., 2014]{hairer2014spectral}
Hairer, M., Stuart, A.~M., Vollmer, S.~J., et~al. (2014).
\newblock Spectral gaps for a metropolis--hastings algorithm in infinite
  dimensions.
\newblock {\em The Annals of Applied Probability}, 24(6):2455--2490.

\bibitem[Jarner and Hansen, 2000]{jarner2000geometric}
Jarner, S.~F. and Hansen, E. (2000).
\newblock {Geometric ergodicity of Metropolis algorithms}.
\newblock {\em Stochastic processes and their applications}, 85(2):341--361.

\bibitem[Johndrow and Mattingly, 2017]{johndrow2017coupling}
Johndrow, J.~E. and Mattingly, J.~C. (2017).
\newblock {Coupling and decoupling to bound an approximating Markov Chain}.
\newblock {\em arXiv preprint arXiv:1706.02040}.

\bibitem[Johndrow et~al., 2016]{johndrow2016mcmc}
Johndrow, J.~E., Smith, A., Pillai, N., and Dunson, D.~B. (2016).
\newblock {MCMC} for imbalanced categorical data.
\newblock {\em arXiv preprint arXiv:1605.05798}.

\bibitem[Kontoyiannis and Meyn, 2003]{KontoyiannisMeyn2003}
Kontoyiannis, I. and Meyn, S.~P. (2003).
\newblock Spectral theory and limit theorems for geometrically ergodic {M}arkov
  processes.
\newblock {\em The Annals of Applied Probability}, 13(1):304--362.

\bibitem[Kontoyiannis and Meyn, 2012]{KontoyiannisMeyn2012}
Kontoyiannis, I. and Meyn, S.~P. (2012).
\newblock Geometric ergodicity and the spectral gap of non-reversible {M}arkov
  chains.
\newblock {\em Probab. Theory Related Fields}, 154(1-2):327--339.

\bibitem[Korattikara et~al., 2014]{korattikara2014austerity}
Korattikara, A., Chen, Y., and Welling, M. (2014).
\newblock {Austerity in MCMC land: cutting the Metropolis-Hastings budget}.
\newblock In {\em International Conference on Machine Learning}, pages
  181--189.

\bibitem[Mattingly et~al., 2010]{MattinglyStuartTretyakov2015}
Mattingly, J.~C., Stuart, A.~M., and Tretyakov, M.~V. (2010).
\newblock Convergence of numerical time-averaging and stationary measures via
  {P}oisson equations.
\newblock {\em SIAM J. Numer. Anal.}, 48(2):552--577.

\bibitem[Mengersen and Tweedie, 1996]{mengersen1996rates}
Mengersen, K.~L. and Tweedie, R.~L. (1996).
\newblock {Rates of convergence of the Hastings and Metropolis algorithms}.
\newblock {\em The Annals of Statistics}, 24(1):101--121.

\bibitem[Meyn and Tweedie, 2012]{meyn2012markov}
Meyn, S.~P. and Tweedie, R.~L. (2012).
\newblock {\em Markov chains and stochastic stability}.
\newblock Springer.

\bibitem[Mitrophanov, 2005]{mitrophanov2005sensitivity}
Mitrophanov, A.~Y. (2005).
\newblock Sensitivity and convergence of uniformly ergodic {M}arkov chains.
\newblock {\em Journal of Applied Probability}, 42(4):1003--1014.

\bibitem[Pillai and Smith, 2015]{pillai2015ergodicity}
Pillai, N.~S. and Smith, A. (2015).
\newblock Ergodicity of approximate {MCMC} chains with applications to large
  data sets.
\newblock {\em arXiv preprint arXiv:1405.0182v2}.

\bibitem[Revuz, 1975]{MR0415773}
Revuz, D. (1975).
\newblock {\em Markov chains}.
\newblock North-Holland.

\bibitem[Roberts and Rosenthal, 2001]{roberts2001optimal}
Roberts, G.~O. and Rosenthal, J.~S. (2001).
\newblock Optimal scaling for various metropolis-hastings algorithms.
\newblock {\em Statistical science}, 16(4):351--367.

\bibitem[Roy and Hobert, 2007]{roy2007convergence}
Roy, V. and Hobert, J.~P. (2007).
\newblock {Convergence rates and asymptotic standard errors for Markov chain
  Monte Carlo algorithms for Bayesian probit regression}.
\newblock {\em Journal of the Royal Statistical Society: Series B (Statistical
  Methodology)}, 69(4):607--623.

\bibitem[Rudolf and Schweizer, 2018]{rudolf2015perturbation}
Rudolf, D. and Schweizer, N. (2018).
\newblock {Perturbation theory for Markov chains via Wasserstein distance}.
\newblock {\em Bernoulli}, 24(4A):2610--2639.

\bibitem[Welling and Teh, 2011]{welling2011bayesian}
Welling, M. and Teh, Y.~W. (2011).
\newblock Bayesian learning via stochastic gradient {L}angevin dynamics.
\newblock In {\em Proceedings of the 28th International Conference on Machine
  Learning (ICML-11)}, pages 681--688.

\end{thebibliography}

\end{document}